\setlist[enumerate]{leftmargin=*,labelindent=.5pc}
\newcommand{\Par}{\vspace{5pt}\noindent\textbf{ 
\refstepcounter{equation}
(\theequation)}
  \textbf}
\newtheorem{thm}[equation]{Theorem}
\newtheorem{cor}[equation]{Corollary}
\newtheorem{prop}[equation]{Proposition}
\newtheoremstyle{example}{\topsep}{\topsep}%
     {}
     {}
     {\bfseries}
     {.}
     {2pt}
     {\thmname{#1}\thmnumber{ #2}\thmnote{ #3}}
   \theoremstyle{example}
   \newtheorem{Defi}[equation]{Definition}
   \newtheorem{defi}[equation]{Definition}
   \newtheorem{rem}[equation]{Remark}
   \newtheorem{exas}[equation]{Examples}
   \newtheorem{ex}[equation]{Example}
\newtheorem{exa}[equation]{Example}
\def\AAA{\mathbb{A}}
\def\AA{\mathbb{A}}
\def\CC{\mathbb{C}}
\def\PP{\mathbb{P}}
\def\RR{\mathbb{R}}
\def\ZZ{\mathbb{Z}}
\def\QQ{\mathbb{Q}}
\def\scrF{\mathscr{F}}
\def\F{\scrF}
\def\Ac{\mathcal{A}}
\def\Bc{\mathcal{B}}
\def\Cc{\mathcal{C}}
\def\Dc{\mathcal{D}}
\def\Fc{\mathcal{F}}
\def\Oc{\mathcal{O}}
\def\Tc{\mathcal{T}}
\def\Cb{\mathbf{C}}
 \def\End{\operatorname{End}\nolimits}
  \def\Ext{\operatorname{Ext}\nolimits}
\def\Ho{\operatorname{Ho}}
 \def\Hom{\operatorname{Hom}}
 \def\id{\operatorname{id}}
 \def\IHom{\underline{\Hom}}
 \def\Id{\operatorname{Id}\nolimits}
 \def\Lie{\operatorname{Lie}\nolimits}
 \def\Map{\operatorname{Map}}
 \def\Mod{\operatorname{Mod}}
\def\NC{\operatorname{NC}}
\def\Ob{\operatorname{Ob}\nolimits}
\def\on{\operatorname}
\def\pt{\operatorname{pt}\nolimits}
\def\Vect{\operatorname{Vect}}
\def\3x3{\operatorname{3}\times\operatorname{3}}
\def\1{{\bf 1}}
\def\lra{\longrightarrow}
\def\(({(\hskip -1mm (}
\def\)){)\hskip -1mm )}
\def\((({(\hskip -1mm (}
\def\))){)\hskip -1mm )}
\def\llb{ (\hskip -1mm (}
\def\rlb{ )\hskip -1mm )}
\def\be{\begin{equation}}
\def\ee{\end{equation}}
\def\ed{\end{document}}
\def\la{{\langle}}
\def\ra{{\rangle}}
\def\ind{\varinjlim{}}
\def\pro{\varprojlim{}}
\def\hoind{\operatorname{ho}\!\varinjlim{}}
\def\hopro{\operatorname{ho}\!\varprojlim{}}
 \def\k{\mathbf{k}}
\def\op{{\operatorname{op}}}
\def\Fun{\operatorname{Fun}}
\def\h{\operatorname{h}\!}
\def\ho{\operatorname{ho}\!}
\def\hopro{\ho \underleftarrow{\lim}{}}
\def\Hom{\operatorname{Hom}}
\def\O{{\mathcal O}}
\def\J{{\mathcal J}}
\def\Kan{\operatorname{Kan}}
\def\h{\operatorname{h}\!}
\def\Map{\operatorname{Map}}
\def\N{\operatorname{N}}
\def\PF{{P^\triangleright}}
\def\PI{{P^\triangleleft}}
\def\s{{\mathscr S}}
\def\Rib{ {\mathcal Rib}}
\def\cm{\langle m \rangle}
\def\cn{\langle n \rangle}
 \def\SW{\mathcal {S}}
\def\T{{\mathcal T}}
\def\Set{ {\mathcal Set}}
\def\FC{\operatorname{FC}}
\def\cone{\operatorname{cone}}
 \def\dg{\on{dg}}
\def\Ndg{\N_{\on{dg}}}
\def\A{ {\EuScript A}}
\def\P{{\EuScript P}}
\def\B{ {\EuScript B}}
\def\C{{\EuScript C}}
\def\D{{\EuScript D}}
\def\E{{\EuScript E}}
\def\wD{\widetilde{{\EuScript D}}}
\def\R{{\EuScript R}}
\def\J{ {\EuScript J}}
\def\K{ {\EuScript K}}
\def\Set{ {\mathcal Set}}
\def\<<{\langle {}\hskip -.1cm {}\langle}
\def\>>{\rangle \hskip -.1cm \rangle}
\def\FC{\on{FC}}
\def\Heq{\mathsf{Hqe}}
\def\Qeq{\mathsf{Qe}}
\def\Mor{\mathsf{Mo}} 
\def\Hmo{\mathsf{Hmo}}
\def\Hmot{\mathsf{Hmo}^{(2)}}
\def\Qis{\mathsf{Qis}}
\setlist[enumerate,1]{label=(\arabic{*})}
\setlist[enumerate,2]{label=(\alph{*})}
\setlist[enumerate,3]{label=(\roman{*})}
\def\centerarc[#1](#2)(#3:#4:#5)  { \draw[#1] ($(#2)+({#5*cos(#3)},{#5*sin(#3)})$) arc (#3:#4:#5); }
\def\Zt{\mathbb{Z}/2\mathbb{Z}}
\def\LCat{{\L\text{-}\EuScript Cat}}
\def\dgcat{\mathsf{dgcat}}
\def\dgcatt{\dgcat^{(2)}}
\def\Vectt{\Vect^{(2)}_{\k}}
\def\Modt{\Mod^{(2)}}
\def\Perf{\on{Perf}}
\def\Perft{\Perf^{(2)}}
\def\IRHom{R \underline{\on{Hom}}}
\def\IFun{\underline{\on{Fun}}}
\def\MF{\on{MF}}
\def\LF{\on{LF}}
\def\E{ {\EuScript E}}
\def\T{ {\EuScript T}}
\def\Q{ {\EuScript Q}}
\def\L{ {\ZZ_+}}
\def\mod{\on{-mod}}
\def\LSet{\L\text{-}\Set}
\def\sSet{{\Set}_{\Delta}}
\def\cm{\langle m \rangle}
\def\cn{\langle n \rangle}
\newcommand{\co}[1]{ {\langle #1 \rangle}}
\def\Zp{{\mathbb Z}_+}
\title{Triangulated surfaces in triangulated categories}
 \author{T. Dyckerhoff\footnote{Department of Mathematics, Yale University,
	  10 Hillhouse Avenue, New Haven CT 06520 USA, email:
	   {\tt tobias.dyckerhoff@yale.edu, mikhail.kapranov@yale.edu}  }, M.
	   Kapranov\footnotemark[1]
}
\begin{document}
%

\maketitle
\begin{abstract}
For a triangulated category $\Ac$ with a $2$-periodic dg-enhancement and a triangulated oriented marked
surface $S$, we introduce a dg-category $\F(S,\Ac)$ parametrizing systems of exact triangles in $\Ac$ labelled
by triangles of $S$. Our main result is that $\F(S,\Ac)$ is independent of the choice of a triangulation
of $S$ up to essentially unique Morita equivalence. In particular, it admits a canonical action of the
mapping class group. The proof is based on general properties of cyclic $2$-Segal spaces.

In the simplest case, where $\Ac$ is the category of $2$-periodic complexes of vector spaces,
$\F(S,\Ac)$ turns out to be a purely topological model for the Fukaya category of the surface $S$.
Therefore, our construction can be seen as implementing a $2$-dimensional instance of Kontsevich's
program on localizing the Fukaya category along a singular Lagrangian spine.
\end{abstract}

\tableofcontents

\addcontentsline{toc}{section}{Introduction}

\vfill\eject

\numberwithin{equation}{section}

\section*{Introduction} 
\paragraph{} 
The goal of this paper is to study a certain ``2-dimensional symmetry" built into the very foundations
of triangulated categories and thus of homological algebra more generally.
To make it manifest, we represent exact triangles in a triangulated category $\Dc$, in the dual fashion:
\begin{equation}\label{eq:geom-triangle}
  \begin{tikzpicture}[>=stealth,baseline=(current  bounding  box.center),decoration={
    markings,
    mark=at position 0.55 with {\arrow{>}}}]
\usetikzlibrary{calc}
\def\centerarc[#1](#2)(#3:#4:#5)  { \draw[#1] ($(#2)+({#5*cos(#3)},{#5*sin(#3)})$) arc (#3:#4:#5); }

    \node (left) at (-1.5,0) {};
 \fill (left) circle (0.05);
    \node (middle) at (0,2) {};
 \fill (middle) circle (0.05);
    \node (right) at (1.5,0) {};
 \fill (right) circle (0.05);
    
    
    \draw[postaction={decorate}] (left.center) -- (middle.center);
      \draw[postaction={decorate}] (middle.center) -- (right.center);
       \draw[postaction={decorate}] (left.center) -- (right.center);
       
       \node at (-1,1.3){$A$};
          \node at (1,1.3){$C$};
          \node at (0, -0.5) {$B$};

        \centerarc[->](-1.5,0)(46:2:.6)
         \node at (-.8, .4) {$\alpha$};
         
         \centerarc[->](1.5,0)(180:130:0.6)
         \node at (0.8, 0.4) {$\beta$};
         
         \centerarc[->](0,2)(300:240:0.6)
         \node at (0,1.2) {$\gamma$}; 

	 \node at (3.5,1){$\Longleftrightarrow$};

         \node at (7,1){
	$A\buildrel \alpha \over \lra  B \buildrel \beta \over \lra C\buildrel \gamma \over \lra A[1].$
	};
   \end{tikzpicture}
\end{equation}

That is, we assign objects to oriented edges of geometric triangles, and morphisms
to their angles. A morphism of degree $1$ is represented by an angle formed by two edges with
different directions (one incoming, one outgoing). The advantage of this dual point of view is that
the most fundamental types of diagrams are now represented by collections of geometric triangles of
the most basic shapes.

\begin{ex}\label{ex:octahedron-flip}
The two halves of an octahedron are represented by two triangulations of a 4-gon.
The octahedral axiom is now interpreted as switching from one triangulation to the other (flip):

\centering
\begin{tikzpicture}[>=stealth,baseline=(current  bounding  box.center),decoration={
    markings,
    mark=at position 0.55 with {\arrow{>}}}
    ] 
	\begin{scope}[xshift=0cm,yshift=-.5cm]
\node (1) at (0,0){}; 
 \fill (1) circle (0.05);
 \node (2) at (0,-2){};
 \fill (2) circle (0.05);
 \node (3) at (-2,0){};
 \fill (3) circle (0.05);
 \node (4) at (-2,-2){};
 \fill (4) circle (0.05);
 
 \draw[postaction={decorate}] (2.center) -- (1.center); 
 \draw[postaction={decorate}] (4.center) -- (2.center); 
 \draw[postaction={decorate}] (3.center) -- (4.center); 
 \draw[postaction={decorate}] (4.center) -- (1.center); 
 \draw[postaction={decorate}] (3.center) -- (1.center); 
 
 \node at (0.4, -1){$A_1$}; 
 \node at (-1, -2.4) {$A_2$}; 
 \node at (-2.4, -1) {$A_3$}; 
 \node at (-1, 0.4) {$A_{123}$}; 
 \node at (-1.2, -0.6) {$A_{12}$}; 
 
 \centerarc[->](0,-2)(90:180:0.4)
 \centerarc[->](-2,-2)(0:40:0.4)
 \centerarc[->](0,0)(230:270:0.4)
 \centerarc[->](0,0)(180:220:0.4)
 \centerarc[->](-2,-2)(50:90:0.4)
 \centerarc[->](-2,0)(270:360:0.4)

 \node at (2, -1) {$\Longleftrightarrow$}; 

 \draw[<->] (2,-3.5) -- (2,-4.5); 
 \node[left] at (1.7,-4){\text{flip}};
 
\end{scope} 
\begin{scope}[xshift=1cm]
 
 \node (A1) at (3,0){$A_1$}; 
 \node (A2) at (7,0) {$A_2$}; 
 \node (A123) at (3, -3) {$A_{123}$}; 
 \node (A3) at (7, -3) {$A_3$}; 
 \node (A12) at (5, -1.5) {$A_{12}$}; 
 
 \draw[->] (A1) -- (A2); 
 \draw[->] (A2) -- (A3);  
  \draw[->] (A3) -- (A123); 
   \draw[->] (A123) -- (A1); 
 \draw[->] (A2) -- (A12); 
  \draw[->] (A123) -- (A12); 
   \draw[->] (A12) -- (A1); 
    \draw[->] (A12) -- (A3); 
 
 \node at (5,0.3) {+1};
 \node at (7.3, -1.5) {+1};
 \node at (6, -1.8) {+1};

\end{scope}
 
\begin{scope}[xshift=0cm,yshift=-6.5cm]

\node (1) at (0,0){}; 
 \fill (1) circle (0.05);
 \node (2) at (0,-2){};
 \fill (2) circle (0.05);
 \node (3) at (-2,0){};
 \fill (3) circle (0.05);
 \node (4) at (-2,-2){};
 \fill (4) circle (0.05);
 
 \draw[postaction={decorate}] (2.center) -- (1.center); 
 \draw[postaction={decorate}] (4.center) -- (2.center); 
 \draw[postaction={decorate}] (3.center) -- (4.center); 
 \draw[postaction={decorate}] (3.center) -- (1.center); 
 \draw[postaction={decorate}] (3.center) -- (2.center); 
 
 \node at (0.4, -1){$A_1$}; 
 \node at (-1, -2.4) {$A_2$}; 
 \node at (-2.4, -1) {$A_3$}; 
 \node at (-1, 0.4) {$A_{123}$}; 
 \node at (-0.8, -0.8) {$A_{23}$}; 
 
 \centerarc[->](0,-2)(90:125:0.4)
 \centerarc[->](0,-2)(140:175:0.4)

 \centerarc[->](0,0)(180:270:0.4)
 \centerarc[->](-2,-2)(0:90:0.4)
 \centerarc[->](-2,0)(270:310:0.4)
  \centerarc[->](-2,0)(320:360:0.4)
 
 \node at (2, -1) {$\Longleftrightarrow$}; 

\end{scope}
 
\begin{scope}[xshift=1cm,yshift=-6cm]
 
  \node (A1) at (3,0){$A_1$}; 
 \node (A2) at (7,0) {$A_2$}; 
 \node (A123) at (3, -3) {$A_{123}$}; 
 \node (A3) at (7, -3) {$A_3.$}; 
 \node (A23) at (5, -1.5) {$A_{23}$}; 
 
 \draw[->] (A1) -- (A2); 
 \draw[->] (A2) -- (A3);  
  \draw[->] (A3) -- (A123); 
   \draw[->] (A123) -- (A1); 
 \draw[->] (A23) -- (A123); 
  \draw[->] (A23) -- (A2); 
   \draw[->] (A1) -- (A23); 
    \draw[->] (A3) -- (A23); 
 
 \node at (5,0.3) {+1};
 \node at (7.3, -1.5) {+1};
 \node at (4.2, -0.5) {+1};

 \end{scope}
\end{tikzpicture}
\end{ex}

\begin{ex} A {\em Postnikov system} in $\Dc$ is a diagram of exact triangles representing an object
$A_{12...n}$ as an iterated extension of (``tower of fibrations" with fibers being) the
given objects $A_1, ..., A_n$, see \cite{gelfand-manin}, Ch. 4, \S 2.  Note that there are
several possible types of Postnikov systems, see {\em loc. cit}. In our approach, these
correspond to different triangulations of the $(n+1)$-gon. The octahedral axiom thus
allows us to pass from any one type to any other by a sequence of flips on $4$-gons.

\vspace*{.5cm}
\begin{tikzpicture}[>=stealth,baseline=(current  bounding  box.center),decoration={
    markings,
    mark=at position 0.55 with {\arrow{>}}}
    ] 
\node (P0) at (0:0){};
 \fill (P0) circle (0.05);
 \node (P1) at (1,-1){};
 \fill (P1) circle (0.05);
  \node (P2) at (0.7, -2){}; 
 \fill (P2) circle (0.05);
  \node (P3) at (0, -2.7){};
 \fill (P3) circle (0.05);
   \node (P4) at (-.8, -3){};
 \fill (P4) circle (0.05);
   \node (dots) at (-1.5, -2.5){$\ddots$};
   \node(Pn-2) at (-2, -2){};
 \fill (Pn-2) circle (0.05);
   \node(Pn-1) at (-2, -1){};
 \fill (Pn-1) circle (0.05);
   \node (Pn) at (-1.5,0){};
 \fill (Pn) circle (0.05);

  \draw[postaction={decorate}] (P1.center) -- (P0.center); 
\draw[postaction={decorate}] (P2.center) -- (P1.center); 
\draw[postaction={decorate}] (P3.center) -- (P2.center); 
\draw[postaction={decorate}] (Pn.center) -- (P0.center); 
\draw[postaction={decorate}] (Pn.center) -- (Pn-1.center); 
\draw[postaction={decorate}] (Pn-1.center) -- (Pn-2.center); 
 \draw[postaction={decorate}] (P2.center) -- (P0.center); 
  \draw[postaction={decorate}] (P3.center) -- (P0.center); 
   \draw[postaction={decorate}] (Pn-2.center) -- (P0.center); 
    \draw[postaction={decorate}] (Pn-1.center) -- (P0.center); 
     \draw[postaction={decorate}] (P4.center) -- (P3.center); 
     
     \draw[postaction={decorate}] (P4.center) -- (P0.center);

\node at (-0.75, .3){$A_{12...n}$};
\node at (0.8, -0.3){$A_1$}; 
\node at (1.2, -1.3){$A_2$};
\node at (0.6, -2.5){$A_3$};
\node at (-0.3, -3.1){$A_4$};
\node at (-2.3, -0.5){$A_n$};
\node at (-2.5, -1.4){$A_{n-1}$}; 

\node (A12n) at (4,0){$A_{12...n}$}; 
\node (A12n-1) at (6,0){$A_{12...n-1}$}; 
\node (cdots) at (8,0){$\cdots$}; 
\node (A12) at (10,0){$A_{12}$};
\node(A1) at (12,0){$A_1$};
\node (An) at (5,-2){$A_n$};
\node (meddots) at (7,-1){$\cdots$};
\node (other dots) at (9,-1){$\cdots$}; 
\node (A2) at (11, -2){$A_2$}; 

\draw[->] (A12n) -- (A12n-1); 
\draw[->] (A12n-1) -- (cdots); 
\draw[->](cdots) -- (A12); 
\draw[->] (A12) -- (A1); 
\draw[->] (A1) -- (A2); 
\draw[->] (A2) -- (A12); 
\draw[->] (An) -- (A12n); 
\draw[->] (A12n-1) -- (An); 
\draw[->]  (meddots) -- (A12n-1); 
\draw [->] (A12) -- (other dots); 

\node at (12, -1){+1};
\node at (6, -1){+1}; 

\node at (2.5, -1){$\Longleftrightarrow$};

\end{tikzpicture}
\end{ex}
 
\paragraph{}
This 2-dimensional symmetry becomes even more pronounced, if $\Dc$ is 2-periodic, i.e., the shift
functor $\Sigma: A\mapsto A[1]$ squares to the identity. In this case we can freely switch the
directions of edges in the geometric triangle representing an exact one as above, by postulating
that such switches amount to applying $\Sigma$:
  
\begin{center}
  \begin{tikzpicture}[>=stealth,baseline=(current  bounding  box.center),decoration={
    markings,
    mark=at position 0.55 with {\arrow{>}}}]
   
\node (P0) at (0,0){};
 \fill (P0) circle (0.05);
 \node (P1) at (2,0){};
  \fill (P1) circle (0.05);
  \node (P2) at (4,0){};
  \fill (P2) circle (0.05);
\node (P3) at (6,0){};
  \fill (P3) circle (0.05);
  
  \draw[postaction={decorate}]  (P0.center) -- (P1.center);
  \draw[postaction={decorate}] (P3.center) -- (P2.center);
  
  \node at (1,0.5) {$A$}; 
  \node at (5,0.5){$A[1]$};
  
  \node at (3,0) {$\Longleftrightarrow$}; 
   
  \end{tikzpicture}

 \end{center}
The really important remaining datum is purely 2-dimensional: it is the orientation of the
geometric triangle itself, which determines the directions of the morphisms between the objects on
its edges. We can therefore consider {\em surface Postnikov systems}: diagrams of exact triangles
in $\Dc$, whose associated geometric triangles form a curvilinear triangulation $\Tc$ of an
oriented topological surface $S$, possibly with boundary.
 \begin{center}
   \begin{tikzpicture}[>=stealth,baseline=(current  bounding  box.center),decoration={
    markings,
    mark=at position 0.55 with {\arrow{>}}}, thick,scale=0.5]

  \filldraw[even odd rule,fill=gray!25, fill opacity=0]
 plot[smooth,tension=1] coordinates {(-14,4)  (0,5)     (7,0) 
  (0,-3)   (-14, -2)}
 
 (-14,4) to[out=0,in=0] (-14, -2)
 
 plot[smooth, tension=1] coordinates{(-7,1) (-5.5, 2) (-4, 1.5)}
 
 plot[smooth, tension=1] coordinates{(-8,1.5) (-5.5, 1) (-2, 2.3)};
 
 \draw  (-14,-2) to[out=180,in=180] (-14,4);

 \node (left) at (-1.5,0) {};
    \node (middle) at (0,2) {};
    \node (right) at (1.5,0) {};
    
      \fill (left) circle (0.1);
    \fill (middle) circle (0.1);
    \fill (right) circle (0.1);  
    
    \draw[line width=.1mm] (left.center) -- (middle.center);
      \draw[line width=.1mm] (middle.center) -- (right.center);
       \draw[line width=.1mm] (left.center) -- (right.center);
       
       \node at (-1,1.3){$A$};
          \node at (1,1.3){$C$};
          \node at (0, -0.5) {$B$};

        \centerarc[->](-1.5,0)(46:1:.6)

         \centerarc[->](1.5,0)(180:130:0.6)
                  
         \centerarc[->](0,2)(300:240:0.6)
         
         \node (N1) at (3,3){};
         \fill (N1) circle (0.1);

         \draw[line width=.1mm] (middle.center) .. controls (1,3) and (2,2.6) .. (N1.center);
         
         \draw[line width=.1mm] (right.center) .. controls (2,1) and (1.5,2) .. (N1.center);
         
         \node (N2) at (-12.34,0){};
         \fill (N2) circle (0.1);
         
         \draw[line width=.1mm] (N2.center) .. controls (-8,1) and (-3,0) .. (left.center);
         
         \draw[line width=.1mm] (N2.center) .. controls (-8,-2) and (1.5,-3) .. (right.center);

 \end{tikzpicture}
 \end{center}

Ordinary Postnikov systems are obtained when $S$ is a disk and all the vertices of $\Tc$ are on
$\partial S$. Standard results of Teichm\"uller theory imply that any two triangulations of $S$
with the same underlying set of vertices $M$ are connected by a sequence of flips. This suggests that an
appropriately defined classifying space of surface Postnikov systems depends, in a very canonical
way, only on the oriented surface $(S,M)$ and not on a chosen triangulation $\Tc$, in particular, 
that it is acted upon by the mapping class group of $(S,M)$. In the present paper we make this
statement precise and provide a proof. The resulting theory turns out to be related to subjects 
such as Fukaya categories, matrix factorizations and mirror symmetry. 
 
In order to have good classifying spaces of exact diagrams in $\Dc$, it seems
unavoidable to assume that $\Dc$ comes with an {\em enhancement}, a certain refinement of the
graded abelian groups $\Hom_\Dc(A, \Sigma^\bullet B)$. In this paper we mostly work with
dg-enhancements (\S \ref{subsec:morita}) which allows us to use techniques from the Morita homotopy
theory of dg-categories \cite{tabuada, toen-morita} such as model structures, simplicial mapping
spaces, homotopy limits, etc.

In this setting, given any triangulation $\Tc$ of $(S,M)$, we can form the {\em universal Postnikov
system} of type $\Tc$ which is a $2$-periodic dg-category $L\E^\Tc$ with the following ``universal
property'': Given any perfect $2$-periodic dg category $\Ac$, enhancing a triangulated category $\Dc$,
the classifying space of surface Postnikov systems of type $\Tc$ with values in $\Ac$ is given as
the simplicial mapping space 
\begin{equation}\label{eq:post}
    \on{Post}^\Tc(\Ac) := \Map(L\E^\Tc, \Ac)
\end{equation}
in the category of $2$-periodic dg categories, localized along Morita equivalences.

Our main result says that, up to Morita equivalence, $L\E^\Tc$ does not depend on $\Tc$, so that we
obtain an object
\[
\F^{(S,M)} \simeq L\E^\Tc \in \Hmo^{(2)}. 
\]
which, up to unique isomorphism, only depends on $(S,M)$. Here, $\Hmo^{(2)}$ is the Morita homotopy
category of $2$-periodic dg-categories. In particular, the mapping class group of $(S,M)$ acts on
$\F^{(S,M)}$ by automorphisms in $\Hmo^{(2)}$. 

We can refine construction \eqref{eq:post} to form the
{\em classifying dg-category of Postnikov systems of type $\Tc$ in $\Ac$} 
\[
	\underline{\on{Post}}^\Tc(\Ac) \,\,:=\,\, \IRHom(L\E^\Tc, \Ac)
\]
where $\IRHom$ denotes To\"en's internal Hom for the category of dg-categories. For the same reasons
as above, this $2$-periodic dg-category is acted upon by the mapping class group of the surface
$(S,M)$. In fact, in both cases, the action of the mapping class group is coherent in the sense of
homotopy theory.
 
As pointed out to us by M. Kontsevich, the dg-category $\F^{(S,M)}$ is nothing but a version of the Fukaya category of
the surface $S-(M\cap S^\circ)$ obtained by removing the points of $M$ lying in the interior of $S$.
The representation of $\F^{(S,M)}$ as $L\E^\Tc$ provides a rigorous implementation of an instance of 
his program of ``localizing the Fukaya category along a singular Lagrangian spine" 
\cite{kontsevich-miami, kontsevich-sym-homol}.
More generally, he considered a $2d$-dimensional symplectic manifold $(U,\omega)$ which can be
contracted onto a possibly singular Lagrangian subvariety $L \subset U$ by the flow along a vector
field $\xi$ satisfying  $\Lie_\xi(\omega)=-\omega$.  In such a situation he suggested to construct a
``cosheaf of dg-categories" $\Phi_L$ on $L$, refining the Fukaya category $\Fc(U)$, which should be
recovered as the category of global sections $\Phi_L(L)$. In particular, different choices of $L$ 
should lead to different realizations of $\Fc(U)$. 

Our situation corresponds to the simplest case $d=1$ when $U=S-M$, where $(S,M)$ is a marked surface
with $\partial S=\emptyset$, which we consider as a symplectic manifold with respect to some 2-form $\omega$.
A triangulation $\T$ of $(S,M)$ gives then a 3-valent {\em dual graph} $L\subset U$ defined up to
isotopy and Lagrangian because $\dim(L)=1$. 
\begin{equation}
\label{eq:dual graph}           
\begin{tikzpicture}[scale=0.4, baseline=(current  bounding  box.center)]

\draw (0,0) -- (6,1); 
\draw (6,1) -- (3,4);
\draw (3,4) -- (0,0); 
\draw (6,1) -- (8,5); 
\draw (3,4) -- (8,5); 
\draw (3,4) -- (-2,6);
\draw (-2,6) -- (0,0); 

\draw[line width=0.5mm] (6,3) -- (9,2); 
\draw[line width=0.5mm] (6,3) -- (5.5,6); 
\draw[line width=0.5mm] (6,3) -- (3,2); 
\draw[line width=0.5mm] (3,2) -- (4, -1); 
\draw[line width=0.5mm] (3,2) -- (0,3); 
\draw[line width=0.5mm] (0,3) -- (-2,2); 
\draw[line width=0.5mm] (0,3) -- (2,6); 

\node at (8.5,3){$L$};
\end{tikzpicture}
\end{equation} 

Our $L\E^\Tc$ corresponds to $\Phi_L(L)$. Further, the ``local" nature of $\Phi_L$ in Kontsevich's proposal
corresponds to our construction of $L\E^\Tc$ by gluing it out of local data, a certain system of 2-periodic 
dg-categories $\E^\bullet = (\E^n)_{n\geq 0}$ such that:
\begin{enumerate}
\item $\E^\bullet$ is a cocyclic object, in the sense of A. Connes \cite{connes}, in the category
	$\dgcat^{(2)}$ of 2-periodic dg-categories. In particular, the group $\ZZ/(n+1)$ acts on
	$\E^n$ by automorphisms of dg-categories. 
\item For every $n \ge 0$, the dg-category $\E^n$ is Morita equivalent to a dg-enhancement of
	$D^{(2)}(A_n\mod)$, the 2-periodic derived category of representations of the quiver $A_n$.
	The action of the generator of $\ZZ/(n+1)$ corresponds to the Coxeter functor. 
\item The cosimplicial object underlying $\E^\bullet$ is 2-coSegal in the sense of \cite{HSS1}.
\end{enumerate}
Property (1) is responsible for the fact that the construction $L\E^\Tc$ does only depend on the
orientation of each triangle of $\Tc$ induced from the orientation of the surface $S$ 
and not on any particular orientations of its edges.
Condition (2) corresponds to the requirement in \cite{kontsevich-miami} that the stalk of $\Phi_L$
at a ramification point of $L$ with valency $n+1$ should be a version of $D(A_n\mod)$. 
The 2-coSegal property (3) ensures the coherent independence of $L\E^\Tc$ 
on $\Tc$ (or, equivalently, of $\Phi_L$ on $L$).

We call the dg-category $\F^{(S,M)}$ the {\em topological coFukaya category of the marked
oriented surface $(S,M)$}. Dually, for any perfect $2$-periodic dg-category $\Ac$, the dg-category
\[
  \IRHom(\F^{(S,M)}, \Ac)
\]
is called the {\em topological Fukaya category of $(S,M)$ with coefficients in $\Ac$}. In the case
where $\Ac$ is the dg category $\Perf_{\k}^{(2)}$ of $2$-periodic perfect complexes of $\k$-vector spaces, 
we introduce the notation 
\[
  \F_{(S,M)} = \IRHom(\F^{(S,M)}, \Perf_{\k}^{(2)})
\]
and refer to this category as the {\em topological Fukaya category of $(S,M)$}. In other words, the
dg-category $\F_{(S,M)}$ is the Morita dual of $\F^{(S,M)}$.
The terminology is chosen to reflect the descent properties of these constructions: The choice of a
spanning Ribbon graph $\Gamma$ of the surface $(S,M)$ can be regarded as a combinatorial way of
encoding an open covering of the surface. The Morita equivalences
\begin{align}
	\label{eq:cosheaf} \F^{(S,M)} & \simeq L\E^{\Gamma} \simeq \hoind^{\dgcatt}_{\{\Lambda^n \to \Lambda^{\Gamma}\}}
  \E^n\\
  \label{eq:sheaf} \F_{(S,M)} & \simeq R\E_{\Gamma} \simeq \hopro^{\dgcatt}_{\{\Lambda^n \to \Lambda^{\Gamma}\}}
  \E_n
\end{align}
are immediate by our construction of the topological Fukaya category as a homotopy Kan extension,
and assign a precise meaning to the statement that the topological (co)Fukaya category is a
homotopy (co)sheaf with values in dg-categories. 
The homotopy limits in \eqref{eq:cosheaf} and \eqref{eq:sheaf} are taken with respect to the Morita
model structure and can be effectively computed using standard techniques from the theory of 
model categories. We illustrate this in \S \ref{subsection:fukayaexamples} where we investigate some examples 
appearing on Kontsevich's list \cite[Pictures]{kontsevich-sym-homol}.

For our constructions to work, it is crucial that the system $\E^\bullet$ of dg-categories
satisfies conditions (1),(2), and (3) above. Note that the most immediate dg-enhancements of
$D^{(2)}(A_n\mod)$ do not have manifest cyclic symmetry. From the symplectic point of view, it
is known that $D^{(2)}(A_n{\textrm{-}} \on{mod})$ is the ``Fukaya-Seidel category of the unit disk
$|z|\leq 1$ equipped with the potential $z^{n+1}$" (the 1-dimensional $A_n$-singularity).  However,
one does not obtain a suitable definition of $\E^n$ on this path either. Indeed, the definition of
Seidel \cite{seidel-book} requires choosing, first, a deformation of the singularity, i.e., a
generic polynomial $f(z)= z^{n+1}+\sum_{i=0}^n a_i z^i$ and, second, an ordered basis of
($0$-dimensional) vanishing cycles of $f$, which, again, breaks the cyclic symmetry.

Instead, we define $\E^n$ in terms of matrix factorizations, in the sense of D. Eisenbud, of
$z^{n+1}$, slightly modifying the setup of \cite{takahashi}. Our motivation for this approach is
that this matrix factorization category can be interpreted as the homological mirror of the above
mentioned Fukaya-Seidel category, thus mirror symmetry between Landau-Ginzburg models and matrix
factorizations is locally built into our constructions from the very outset. Our construction uses a
new concept of {\em loop factorization} in V. Drinfeld's $\L$-categories and is explained in detail in \S
\ref{section:loop}.

The concept of a 2-Segal object was introduced in \cite{HSS1} as a unifying concept for various
situations when some object is defined in terms of a choice of a triangulation but ends up not
depending on this choice in a coherent way. In the case of 2-Segal simplicial objects, treated in
{\em loc. cit.}, we deal with triangulations of plane polygons and related instances of
associativity, such as, e.g., in the context of Hall algebras. 
The example that motivated our study of 2-Segal spaces in \cite{HSS1} was the {\em Waldhausen
S-construction}, a simplicial space which plays a fundamental role in algebraic K-theory, see
\cite{gillet}. In \cite{HSS1} we introduced a generalization of the S-construction encompassing
arbitrary stable $\infty$-categories \cite{lurie.stable, lurie.algebra}. The present work grew out
of our heuristic observation that for $2$-periodic perfect dg categories, the S-construction 
 has   a cyclic, and not just a simplicial structure.  
Passing from simplicial to cyclic objects allows one to extend the polygon triangulations to
triangulations of arbitrary marked oriented surfaces in a non-ambiguous way which, applied to the
S-construction, leads to a precise variant of the surface Postnikov systems described above.  The
relevant constructions for the present work are provided in \S \ref{section:cyclicsegal}. A more
detailed account of the general theory will be given in \cite{HSS2}. 

The starting point of this project was a suggestion of J. Lurie to rigorously establish the
additional cyclic symmetry of the S-construction by constructing a cocyclic dg-category which
corepresents it in $\dgcatt$. 
The object $\E^\bullet$ provides a solution, in the sense that, given a $2$-periodic perfect
dg-category $\Ac$, the simplicial space underlying the cyclic space $\Map(\E^{\bullet}, \Ac)$ is
weakly equivalent to the Waldhausen S-construction of $\Ac$. This relies on a comparison result
between pre-triangulated dg-categories and stable $\infty$-categories which has been carried out by
G. Faonte \cite{faonte}. A more detailed analysis will be given in \cite{HSS2}.

In conclusion, we find it remarkable that the observation
\[
\left( {\text{Axioms of homological}\atop\text{algebra}}\right) \,\,\longleftrightarrow\,\,
\left( {\text{Flips of}\atop\text{ $2d$ triangulations} }\right)
\]
naturally leads to a topological variant of the Fukaya category. This phenomenon seems to be
potentially appealing even to someone with no symplectic motivation whatsoever.

We would like to point out that there have been various projects addressing Kontsevich's
localization program for $2$-dimensional symplectic manifolds. We refer the reader to
\cite{sibilla-treumann-zaslow} and the references therein.  In higher dimensions, the general
problem of localization on a given spine is treated in \cite{nadler-higher}.  Very recently, a
construction similar to ours has been given in the context of $A_{\infty}$-categories \cite{nadler}.
The main novelty in our approach is the $2$-Segal property which reflects, in a conceptually clear
way, the fact that the category we construct is a {\em topological} invariant of the marked surface,
coherently independent of a chosen spine.
Finally, we wish to mention that, as we were informed by J. Lurie, he himself has, in joint work
with A. Preygel, found a coparacyclic version of the cocyclic object $\E^{\bullet}$ which is
suitable for an analysis from the point of view of $\infty$-categories and relates to classical
concepts from homotopy theory such as the J-homomorphism.\\ 

{\bf Acknowledgements.}
We are very grateful to J. Lurie for his interest and for many inspiring discussions regarding the
theory of higher Segal spaces in general. In particular, his proposal of corepresenting the
S-construction via a cocyclic object was a key idea in this project.
We would further like to thank A. Goncharov, and M. Kontsevich for important conversations which
influenced the direction of this work. Finally, we would like to acknowledge a conversation
involving D. Ben-Zvi, J. Lurie, D. Nadler, and A. Preygel, in which ideas related to this work were
discussed. The work of T.D. was supported by a Simons Postdoctoral Fellowship. The work of M.K. was
partially supported by an NSF grant and parts of it were carried out during visits to the
Max-Planck-Institut f\"ur Mathematik in Bonn and to Universit\'e Paris-13, whose hospitality and
financial support are gratefully acknowledged.        

\vfill\eject

\numberwithin{equation}{subsection}
  
\section{Background on the homotopy theory of dg-structures}
     
\subsection{Model structures on the category of differential $\ZZ$-graded categories. }
\label{subsec:morita}
 
Let $\k$ be a field and $\Vect_\k^\ZZ$ be the category of $\ZZ$-graded $\k$-vector spaces.
We denote by $\Sigma^n, n\in\ZZ$, the functor of shift of grading: $(\Sigma^n V)^i=V^{i+n}$.
We denote by $\Mod_\k$ the category of cochain complexes of $\k$-vector spaces.
The usual tensor product of complexes makes $\Mod_\k$ into a symmetric monoidal category,
and the shift functor $\Sigma$ is defined by $\Sigma(V^\bullet) = \Sigma(\k)\otimes V^\bullet$,
where  $\Sigma(\k)$ is the vector space $\k$ in degree $(-1)$ with zero differential. 
By a ($\ZZ$-graded) {\em dg-category} we will mean a category $\Ac$ enriched in the symmetric monoidal
category $\Mod_\k$. Note that $\Mod_\k$ itself is a dg-category. 
We denote by $\dgcat$ the category formed by small $\k$-linear dg-categories and their dg-functors.
The category $\dgcat$ has a symmetric monoidal structure $\otimes$ given by the tensor product 
$\Ac\otimes \Bc$ of dg-categories $\Ac$ and $\Bc$:
\[
\begin{gathered}
\Ob(\Ac\otimes\Bc) \,\,=\,\,\Ob(\Ac)\times\Ob(\Bc),\\
\Hom^\bullet_{\Ac\otimes\Bc} \bigl( (x,y), (x',y')\bigr) \,\,=\,\,
\Hom^\bullet_\Ac (x,x')\otimes_\k \Hom^\bullet_\Bc(y, y').
\end{gathered}
\]
Recall that for dg-categories $\Ac, \Bc$ the category $\IHom(\Ac,\Bc)$
of dg-functors $\Ac\to\Bc$
is naturally a dg-category 
so that we have an adjunction
\[
\Hom_{\dgcat}(\Ac, \IHom(\Bc, \Cc)) \cong \Hom_{\dgcat}(\Ac\otimes \Bc, \Cc). 
\]

\begin{Defi} A dg-functor $f: \Ac\to\Bc$ of dg-categories is called:
\begin{itemize}
\item {\em fully faithful}, resp. {\em quasi-fully faithful}, if for any $x,y\in\Ob(\Ac)$ the morphism of complexes
\[
f_{x,y}:\Hom_\Ac^\bullet(x,y) \lra\Hom_\Bc(f(x), f(y))
\]
is an isomorphism, resp. a quasi-isomorphism. 

\item a {\em quasi-isomorphism}, resp. {\em quasi-equivalence}, if $H^\bullet(f): H^\bullet(\Ac)\to H^\bullet(\Bc)$
is an isomorphism, resp.  an equivalence of graded $\k$-linear categories. 
\end{itemize}
\end{Defi} 
 
Dg-functors $\Ac^\op \to \Mod_\k$ will be called (right) {\em dg-modules} over $\Ac$, and the
dg-category formed by them will be denoted $ \Mod_\Ac$. We will use the dg-version of the Yoneda
embedding
\begin{equation}
	\Upsilon_\Ac: \Ac \lra \Mod_\Ac,
\end{equation}
which is a fully faithful dg-functor. 
For background on model categories, see, e.g., \cite{hovey}, \cite{dhks} and Appendix A to \cite{lurie.htt}.
The category $\dgcat$ carries two model structures introduced by Tabuada \cite{tabuada}.
The first one, which we call the {\em quasi-equivalence model structure}, is characterized as follows:
\begin{itemize}
\item[(QW)] Weak equivalences are quasi-equivalences.

\item[(QF)] Fibrations are dg-functors $f: \Ac\to\Bc$ such that:
\begin{enumerate}
  \item $f$ is surjective on $\Hom$-complexes.
  \item For any $x\in\Ob(\Ac)$ and any homotopy equivalence $v: f(x)\to z$ in $\Bc$,
  there is a homotopy equivalence $u: x\to y$ in $\Ac$ such that $f(u)=v$ 
  (in particular, $f(y)=x$). 
\end{enumerate}

\item[(QC)] Cofibrations are defined by the left lifting property with respect to trivial fibrations. 
\end{itemize}
The initial object in $\dgcat$ is the empty dg-category $\emptyset$ (no objects).
The final object is the zero dg-category $\bf 0$ with one object $\pt$ and $\Hom^\bullet(\pt, \pt)=0$.
Note that (QF) implies that every dg-category is fibrant. Let $\Qeq$ be the class of
quasi-equivalences in $\dgcat$, and let
\[
\Heq = \dgcat[\Qeq^{-1}]
\]
denote the homotopy category of the quasi-equivalence model structure. 

It follows from the results of  To\"en \cite{toen-morita} that $\otimes$ defines a {\em closed}
symmetric monoidal structure on $\Heq$, so that we have dg-categories $\IRHom(\Ac, \Bc)$ together
with natural isomorphisms (in $\Heq$)
\begin{equation}\label{eq:toen-rhom1}
\Hom_{\Heq}(\Ac\otimes \Bc, \Cc)\,\,\cong \,\,
\Hom_{\Heq} (\Ac,\IRHom(\Bc, \Cc)). 
\end{equation}
More precisely, To\"en considers the situation when $\k$ is allowed to be 
an arbitrary commutative ring and uses $\otimes^L$, the derived
functor of $\otimes$. In our case when $\k$ is a field,
$\otimes$ preserves quasi-equivalences and hence does not need to be derived. 
Note that the internal Hom is not obtained as a derived functor, in the sense of model categories,
of the bifunctor $\IHom(\Ac,\Bc)$, since the latter does not take quasi-equivalences of cofibrant
dg-categories into quasi-equivalences (see {\em loc. cit.} p.631). 
By the main result of \cite{toen-morita}, the dg-category $\IRHom(\Ac, \Bc)$ can be explicitly
described as the full dg-subcategory of $\Mod_{\Ac^{\op} \otimes \Bc}$
formed by those dg-modules $M$ satisfying:
\begin{enumerate}
\item $M$ is cofibrant.
\item $M$ is {\em right quasi-representable}, i.e., for each $x\in\Ob(\Ac)$ the right dg-module
\[
M(x, -): \Bc^\op\lra \Mod_\k, \quad y\mapsto M(x,y)
\]
is quasi-isomorphic to a representable dg-module $\Upsilon_\Bc(f(x))$ for some object
$f(x)\in\Bc$. 
\end{enumerate}

Recall that, as any model category, $\dgcat$ is equipped with {\em simplicial mapping spaces}
$\Map(\Ac, \Bc)$, obtained by Dwyer-Kan localization. However, as $\dgcat$ is not known to carry a
simplicial model structure in the sense of \cite{hovey}, the computation of $\Map(\Ac, \Bc)$ is
non-trivial.
It was shown in \cite{toen-morita} that the mapping spaces can be computed via simplicial framings,
leading to the explicit formula
\[
  \Map(\Ac, \Bc) \simeq N(\Mod^{\on{rqr}}_{\Ac^{\op} \otimes \Bc}, \mathsf{W}), 
\]
the nerve of the category formed by all right quasi-representable $\Ac^{\op} \otimes \Bc$-modules and
their weak equivalences. The isomorphisms \ref{eq:toen-rhom1} can then be refined to the adjunction
\begin{equation}\label{eq:simplicial-adjunction}
	\Map(\Ac\otimes\Bc, \Cc) \simeq \Map(\Ac, \IRHom(\Bc, \Cc))
\end{equation}
of simplicial mapping spaces (see \cite{toen-morita}).

Let $x, z$ be objects of a dg-category $\Ac$, and $m\in\ZZ$. We say that $z$ is realized as an
$m$-{\em fold shift} of $x$, and write $z\simeq\Sigma^m x$, if we are given an isomorphism of
dg-functors
\[
	\Hom^\bullet(-, z) \lra \Sigma^m \Hom^\bullet(-, x), \quad \Ac^\op\to C_\k.
\]
Note that $\Sigma^m x$, if exists, is defined uniquely up to a unique isomorphism. 
 
We recall (e.g., \cite{toen-vaquie}) that $\Mod_\Ac$, equipped with the projective model
structure, is a $\Mod_\k$-enriched model category in which weak equivalences are quasi-isomorphisms
of dg-modules, and all objects are fibrant. We denote by $\Mod_\Ac^\circ\subset\Mod_\Ac$ the full
dg-subcategory of cofibrant (and automatically fibrant) objects. We also denote by
\[
	D(\Ac) \,\,=\,\, H^0(\Mod_\Ac) \,\,=\,\,\Mod_\Ac[\Qis^{-1}]
\] 
the homotopy category of $\Mod_\Ac$, which is commonly called the {\em derived category} of $\Ac$.
Thus, we have an equivalence
\[
D(\Ac) \,\,\simeq \,\, H^0(\Mod_\Ac^\circ). 
\]
This can be rephrased by saying that we have natural complexes $R\Hom^\bullet_\Ac(M,N)$, given for
each $M,N\in\Mod_\Ac$ and satisfying
\[
H^iR\Hom^\bullet_\Ac(M,N) \,\,\cong \,\, \Hom_{D(\Ac)} (M,\Sigma^i N). 
\]
We further recall that a dg-module $M\in\Mod_\Ac$ is called {\em perfect}, if $M$ is compact in
$D(\Ac)$ in the categorical sense, i.e., the functor $\Hom_{D(\Ac)}(M,-)$ commutes with infinite
direct sums. We denote by $\Perf_\Ac$ the dg-category whose objects are perfect dg-modules and
\[
\Hom^\bullet_{\Perf_\Ac}(M,N)\,\,=\,\, R\Hom^\bullet_\Ac(M,N).
\]
Any dg-functor $\Ac\to\Bc$ gives rise to a Quillen adjunction
\[
f_!: \Mod_\Ac \longleftrightarrow \Mod_\Bc: f^*
\]
where $f^*$ is obtained by composing dg-functors $\Bc^{\op} \to \Mod_\k$ with $f^{\op}$. This induces a
dg-functor
\[
f_!: \Perf_\Ac\to\Perf_\Bc
\]
and a triangulated functor
\[
f^*: D(\Bc)\to D(\Ac). 
\]
The dg-Yoneda embedding factors through a  faithful dg-embedding 
\[
\beta_\Ac: \Ac \lra \Perf_\Ac. 
\]

\begin{Defi}
A dg-category $\Ac$ is called {\em perfect}, if $\beta_\Ac$ is a quasi-equivalence. 
\end{Defi}
It is known that for a perfect $\Ac$, we have that $H^0(\Ac)$ is an idempotent-complete
triangulated category. 

We now define a second model structure on $\dgcat$ which we call the
{\em Morita model structure}. First, we recall that a dg-functor $f: \Ac\to\Bc$
with $\Ac\neq\emptyset$
is called a {\em Morita equivalence}, if the following equivalent conditions
are satisfied:
\begin{enumerate}
\item $f_!: \Perf_\Ac\to\Perf_\Bc$ is a dg-equivalence of dg-categories.
\item $f^*: D(\Bc)\to D(\Ac)$ is an equivalence of triangulated categories. 
\end{enumerate}
See \cite{keller-deriving} and \cite{tabuada}, \S 2.5,
for details, including the treatment of  the case when $\Ac$ is empty.
 The Morita model structure on $\dgcat$ is defined by
(\cite{tabuada}, Th. 0.7):
\begin{itemize}
\item[(MW)] Weak equivalences are Morita equivalences.

\item[(MC)] Cofibrations are the same as for the quasi-equivalence model structure.

\item[(MF)] Fibrations are determined by the right lifting property with respect to
trivial cofibrations. 
\end{itemize}
We denote by $\Mor$ the class of Morita equivalences and by $\Hmo=\dgcat[\Mor^{-1}]$ the homotopy
category of the Morita model structure. There is a Quillen adjunction
\[
\id: (\dgcat,\Qeq) \longleftrightarrow (\dgcat, \Mor) : \id
\]
which exhibits the Morita model structure on $\dgcat$ as a left Bousfield localization (see
\cite{hirschhorn}) of the quasi-equivalence model structure and hence induces an adjunction of
homotopy categories
\[
 F: \Heq \longleftrightarrow \Hmo : G
\]
where $G$ is fully faithful.

\begin{prop}
(a) A dg-category is fibrant for the Morita model structure, if and only if it is perfect.

(b) For any dg-category $\Ac$ the canonical dg-functor
\[
\beta_{\Perf_\Ac}: \Perf_\Ac \lra \Perf_{\Perf_{\Ac}}
\]
is a quasi-equivalence. In particular, $\Perf_\Ac$ is perfect, and $\beta_\Ac: \Ac\to\Perf_\Ac$
is a Morita equivalence.
\end{prop}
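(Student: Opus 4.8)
The plan is to establish part (b) first, and then deduce part (a) from it together with general facts about left Bousfield localizations. Throughout I abbreviate ``quasi-equivalence'' to q.e. and ``Morita equivalence'' to Mor-eq. The conceptual heart of the whole statement is a single comparison, that passing from $\Ac$ to $\Perf_\Ac$ does not change the derived category of modules; once this is in hand, everything else is bookkeeping with the two model structures.

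For (b), I would begin from the Quillen adjunction $(\beta_\Ac)_! : \Mod_\Ac \longleftrightarrow \Mod_{\Perf_\Ac} : \beta_\Ac^*$ induced by $\beta_\Ac : \Ac \to \Perf_\Ac$. Since $\beta_\Ac$ is (homotopy) fully faithful and the representable $\Perf_\Ac$-modules $\Upsilon_{\Perf_\Ac}(\beta_\Ac(x))$ form a set of compact generators of $D(\Perf_\Ac)$ — every object of $\Perf_\Ac$ being built from the image of $\Ac$ by finitely many cones and retracts — this adjunction is a Quillen equivalence, so $\beta_\Ac^*$ induces a triangulated equivalence $D(\Perf_\Ac) \simeq D(\Ac)$; this is the standard dg-Morita comparison (cf. \cite{keller-deriving,toen-morita,toen-vaquie}). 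Restricting to compact objects identifies $H^0(\Perf_{\Perf_\Ac})$ with $H^0(\Perf_\Ac)$, and one checks this identification is the one realized by $\beta_{\Perf_\Ac}$ and upgrades it to a q.e.; hence $\beta_{\Perf_\Ac}$ is a q.e., i.e. $\Perf_\Ac$ is perfect. Finally, $(\beta_\Ac)_!$ and $\beta_{\Perf_\Ac}$ are two exact dg-functors $\Perf_\Ac \to \Perf_{\Perf_\Ac}$ agreeing on the generating subcategory $\beta_\Ac(\Ac)$, hence are isomorphic; since $\beta_{\Perf_\Ac}$ is a q.e., $(\beta_\Ac)_!$ is a dg-equivalence, which by the first characterization of Morita equivalences means that $\beta_\Ac$ is a Mor-eq.

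For (a), recall that the Morita structure is a left Bousfield localization of $(\dgcat,\Qeq)$. By the general theory of such localizations \cite{hirschhorn}, the fibrant objects are precisely the objects that are fibrant in $(\dgcat,\Qeq)$ — which is everything — and local with respect to the Morita equivalences; thus Morita-fibrant $=$ Morita-local. If $\Ac$ is perfect, then for any $\Bc$ the mapping space $\Map(\Bc,\Ac) \simeq N(\Mod^{\on{rqr}}_{\Bc^{\op}\otimes\Ac},\mathsf{W})$ is the classifying space of quasi-functors $\Bc \to \Ac$; since $\Ac \simeq \Perf_\Ac$, To\"en's universal property of the perfect-module construction gives a weak equivalence $\Map(\Perf_\Bc,\Ac) \to \Map(\Bc,\Ac)$ by restriction along $\beta_\Bc$, and because $\Perf$ carries Morita equivalences to quasi-equivalences, $\Map(-,\Ac)$ sends Morita equivalences to weak equivalences — i.e. $\Ac$ is Morita-local. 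Conversely, if $\Ac$ is Morita-local, I apply part (b): $\beta_\Ac : \Ac \to \Perf_\Ac$ is a Mor-eq whose target is perfect, hence, by the direction just proved, also Morita-local. A weak equivalence of the localized structure between two local fibrant objects is automatically a weak equivalence of the original structure \cite{hirschhorn}, so $\beta_\Ac$ is a q.e.; that is, $\Ac$ is perfect.

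The hard part will be the derived-category input in (b), that $\beta_\Ac$ induces $D(\Ac)\simeq D(\Perf_\Ac)$, together with the closely related universal property of $\Perf$ used in (a): both encode that $\Perf_\Ac$ is the free idempotent-complete pretriangulated hull of $\Ac$, and these are the only genuinely non-formal points. I would dispatch them by citing the dg-Morita theory of Keller, To\"en and Tabuada rather than reproving them \cite{keller-deriving,toen-morita,tabuada}, after which the two model-category arguments above are routine. One should also take mild care that the identification $H^0(\Perf_{\Perf_\Ac})\simeq H^0(\Perf_\Ac)$ is realized by $\beta_{\Perf_\Ac}$ \emph{itself}, rather than by some abstract equivalence, so that the conclusion ``$\beta_{\Perf_\Ac}$ is a q.e.'' is literally the definition of perfectness.
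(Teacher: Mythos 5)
Your proposal is correct, but it takes a genuinely different route from the paper for a simple reason: the paper gives no argument at all. Its proof of (a) is a citation of Proposition 0.9 of \cite{tabuada}, and its proof of (b) is a citation of Lemma 7.5 of \cite{toen-morita} --- precisely the two statements you set out to reconstruct. Your decomposition is the more instructive one: you prove (b) by Keller-style Morita theory (the representables $\Upsilon_{\Perf_\Ac}(\beta_\Ac(x))$ compactly generate $D(\Perf_\Ac)$, since every perfect module is a retract of an iterated cone of representables, so $\beta_\Ac^*: D(\Perf_\Ac) \to D(\Ac)$ is an equivalence, and restricting to compact objects identifies $H^0(\beta_{\Perf_\Ac})$ with an equivalence), and you then deduce (a) formally from (b) together with the generalities of left Bousfield localization in \cite{hirschhorn}: since every object of $(\dgcat,\Qeq)$ is fibrant, Morita-fibrant equals Morita-local; a perfect $\Ac$ is local by reduction, via the naturality square for $\beta$, to locality against the maps $\beta_\Bc$, which is To\"en's universal property of $\Perf$; conversely, a local $\Ac$ receives the Morita equivalence $\beta_\Ac$ into the local object $\Perf_\Ac$, and a Morita equivalence between local objects is a quasi-equivalence. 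What your route buys is a transparent logical skeleton --- (a) becomes formal once (b) and the universal property of $\Perf_{(-)}$ are granted --- while the paper's route buys brevity, since both of your non-formal inputs live in exactly the sources the paper cites (\cite{keller-deriving}, \cite{toen-morita}, \cite{tabuada}).

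Two points in your write-up deserve care, though neither is fatal. First, in (b), the principle that two exact functors agreeing on a generating subcategory are isomorphic is \emph{false} for bare triangulated functors; what saves you is that $(\beta_\Ac)_!$ is a derived left Kan extension, so there is a \emph{canonical} natural transformation $(\beta_\Ac)_! \to \beta_{\Perf_\Ac}$, adjoint to the Yoneda identification $\id \simeq \beta_\Ac^*\circ\beta_{\Perf_\Ac}$, and this map is an isomorphism because it is one on representables. You should phrase the step that way. Second, in (a), the cited weak equivalence $\Map(\Perf_\Bc,\Ac) \to \Map(\Bc,\Ac)$ for perfect $\Ac$ is not an independent auxiliary input: it is exactly the locality of $\Ac$ against the particular Morita equivalences $\beta_\Bc$, i.e.\ the forward half of (a) itself (your square argument correctly promotes it to locality against all Morita equivalences). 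So that half of your proof is, like the paper's, ultimately a citation to the dg-Morita literature rather than an argument from scratch.
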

\begin{proof} (a) is Proposition 0.9 of \cite{tabuada}. Part (b) is Lemma 7.5 in \cite{toen-morita} 
\end{proof}

Note that, as a consequence, we obtain that the Yoneda embedding $\beta_{\Ac}: \Ac \to \Perf_{\Ac}$ exhibits
$\Perf_{\A}$ as a fibrant replacement of $\Ac$ in the Morita model structure.

\begin{ex}[(Morita duality)] 
Considering $\k$ as a 1-object dg-category, we see that
$\Perf_\k\subset \Mod_\k$ is the full dg-subcategory of complexes with total cohomology space
finite-dimensional. By the above, it is a Morita fibrant replacement of $\k$. 
The derived tensor product $\otimes^L$ makes $\Hmo$ into a symmetric monodical category with unit object
$\k$. This monoidal structure is closed, with internal $\Hom$ objects given by
\[
\IRHom_{\Hmo}(\Ac, \Bc) \,\,=\,\,\IRHom(\Ac, \Perf_\Bc)
\]
(fibrant replacement of the second argument), see \cite{tabuada}, Cor. 0.12.
Accordingly, for a dg-category $\Ac$, we will call 
\[
\Ac^\vee = \IRHom(\Ac, \Perf_\k)
\]
the {\em Morita dual} of $\Ac$. By the computation of $\IRHom$ in \cite{toen-morita}, the
dg-category $\Ac^\vee$ is identified with the full dg-subcategory in $\Mod_{\Ac^{\op}}$ formed by
dg-modules $M$, which are cofibrant and {\em pseudo-perfect}, i.e., such that each $M(x)$ is a perfect complex.
\end{ex}

Note that passing to the dual object is a contravariant functor
\begin{equation}
	(-)^\vee: \Hmo^\op \lra \Hmo. 
\end{equation}
As in any closed monoidal category, we say that a dg-category $\Ac$ is {\em dualizable} in $\Hmo$,
if the canonical dg-functor
\[
	\Ac^\vee \otimes \Bc \lra \IRHom_{\Hmo} (\Ac, \Bc)
\]
is a Morita equivalence for any $\Bc$. 

\begin{Defi} A dg-category $\Ac$ is called
\begin{itemize}
\item {\em proper}, if each complex $\Hom^\bullet_\Ac(x,y)$ belongs to $\Perf_\k$.

\item {\em smooth}, if the diagonal $\Ac^\op\otimes\Ac$-module
\[
\Ac: (x,y) \mapsto \Hom^\bullet_\Ac(x,y)
\]
belongs to $\Perf_{\Ac^\op\otimes\Ac}$. 
\end{itemize}
\end{Defi}

We recall the following result from \cite{toen-vaquie}.

\begin{prop}\label{prop:dg-dualizable}
A dg-category $\Ac$ is dualizable in $\Hmo$, if and only if it is smooth and proper. In this case a
dg-module over $\Ac$ is perfect if and only if it is pseudo-perfect, and so
\[
	\Ac^\vee \simeq \Perf_{\Ac^{\op}}
\]
is Morita equivalent to $\Ac^\op$. 
\end{prop}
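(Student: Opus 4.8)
The plan is to deduce everything from the abstract theory of dualizable objects in the closed symmetric monoidal category $(\Hmo, \otimes^L, \k)$, combined with the concrete bimodule models for the internal $\Hom$ and the tensor product. Recall that in any closed symmetric monoidal category $(\mathcal{C}, \otimes, \mathbf{1}, \IHom)$ an object $X$ is dualizable precisely when there exist a coevaluation $\eta\colon \mathbf{1}\to X\otimes X^\vee$ and an evaluation $\mathrm{ev}\colon X^\vee\otimes X\to\mathbf{1}$ satisfying the two triangle identities, and this is equivalent to the requirement, used in the definition above, that $X^\vee\otimes Y\to\IHom(X,Y)$ be an isomorphism for all $Y$. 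Moreover, when a dual exists it is canonically $X^\vee=\IHom(X,\mathbf{1})$. In our setting the unit is $\k$, the internal Hom is $\IRHom_{\Hmo}(\Ac,\Bc)=\IRHom(\Ac,\Perf_\Bc)$, modeled by To\"en's theorem via right quasi-representable $\Ac^\op\otimes\Bc$-bimodules, and $\otimes^L$ is modeled by the derived tensor (convolution) of bimodules. First I would fix these models and produce the candidate duality data for $\Ac$ with dual $\Ac^\op$, both the evaluation and the coevaluation being built from the diagonal bimodule $\Delta\in\Mod_{\Ac^\op\otimes\Ac}$, $\Delta(x,y)=\Hom^\bullet_\Ac(x,y)$.

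Next I would prove that smoothness and properness imply dualizability. Smoothness says exactly that $\Delta$ is perfect over $\Ac^\op\otimes\Ac$, so that its class defines a morphism $\eta\colon\k\to\Ac\otimes^L\Ac^\op$ in $\Hmo$ (a morphism out of the unit into $\Ac\otimes^L\Ac^\op$ is the datum of an object of $\Perf_{\Ac\otimes^L\Ac^\op}$). Properness, i.e.\ the perfectness over $\k$ of each $\Hom^\bullet_\Ac(x,y)$, makes $\Delta$ pseudo-perfect and hence provides the evaluation $\mathrm{ev}\colon\Ac^\op\otimes^L\Ac\to\k$. The triangle identities then translate, in the derived category of bimodules, into the single statement that $\Delta$ is a unit for convolution, $\Delta\otimes^L_\Ac\Delta\simeq\Delta$ (together with its mirror), which is a direct computation using the bar resolution of $\Delta$. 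This exhibits $\Ac$ as dualizable with dual $\Ac^\op$, and by uniqueness of duals it already yields $\Ac^\vee\simeq\Ac^\op$ in $\Hmo$.

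For the converse I would assume $\Ac$ dualizable, so the canonical map $\Ac^\vee\otimes^L\Bc\to\IRHom_{\Hmo}(\Ac,\Bc)$ is a Morita equivalence for every $\Bc$. Taking $\Bc=\Ac$, the image of the identity under the unit $\k\to\IRHom_{\Hmo}(\Ac,\Ac)$ is the diagonal bimodule $\Delta$; since the canonical map is now invertible, $\Delta$ is represented by an object of $\Ac^\vee\otimes^L\Ac$ and is in particular perfect over $\Ac^\op\otimes\Ac$, which is smoothness. Properness is then read off from the evaluation: dualizability makes $\Ac^\vee=\IRHom(\Ac,\Perf_\k)$ a genuine dual, and composing the coevaluation and evaluation on the generating object and its shifts forces each $\Hom^\bullet_\Ac(x,y)$ to be compact in $D(\k)$, i.e.\ perfect over $\k$. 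Hence dualizable implies smooth and proper.

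Finally, for the second assertion, under the hypotheses smooth and proper I would directly compare the two full dg-subcategories of $\Mod_{\Ac^\op}$: the perfect modules $\Perf_{\Ac^\op}$ and the pseudo-perfect ones, which by the computation of $\IRHom$ constitute $\Ac^\vee$. Properness shows that every perfect module evaluates to a perfect $\k$-complex, giving $\Perf_{\Ac^\op}\subseteq\Ac^\vee$; smoothness, via the perfectness of $\Delta$, shows conversely that any pseudo-perfect module, being recovered by convolving $\Delta$ with perfect $\k$-complexes, is itself perfect, giving $\Ac^\vee\subseteq\Perf_{\Ac^\op}$. Thus perfect and pseudo-perfect coincide and $\Ac^\vee\simeq\Perf_{\Ac^\op}\simeq\Ac^\op$ in $\Hmo$. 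I expect the main obstacle to be the careful bookkeeping at the derived bimodule level: identifying the canonical map with the convolution and evaluation pairings while tracking cofibrant–fibrant replacements, and verifying the triangle identities tightly enough to conclude dualizability. The perfect-versus-pseudo-perfect comparison is the other delicate point, since each of its two inclusions consumes exactly one of the two hypotheses.
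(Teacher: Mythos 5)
The paper's own ``proof'' of this proposition is a single line: a citation of Lemma 2.8 in \cite{toen-vaquie}. So your proposal has to be judged as an attempt at the argument behind that citation. Two of its three parts are sound and are indeed the standard argument. The forward direction is correct in outline: a morphism $\k \to \Bc$ in $\Hmo$ is exactly a perfect $\Bc$-module, so smoothness makes the diagonal bimodule $\Delta$ a legitimate coevaluation $\k \to \Ac \otimes^L \Ac^{\op}$; properness makes $\Delta$ a legitimate evaluation $\Ac^{\op} \otimes^L \Ac \to \k$; and both triangle identities reduce, under composition-as-convolution, to $\Delta \otimes^L_{\Ac} \Delta \simeq \Delta$. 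Your closing comparison of perfect with pseudo-perfect is also essentially right, including the observation that each inclusion consumes exactly one hypothesis: properness gives perfect $\Rightarrow$ pseudo-perfect because representables are pseudo-perfect and pseudo-perfect modules form a thick subcategory, while smoothness gives pseudo-perfect $\Rightarrow$ perfect because $M \simeq M \otimes^L_{\Ac} \Delta$ and $\Delta$ is a retract of finite extensions of bimodules $h_x \boxtimes h^y$, so $M$ becomes a retract of finite extensions of the perfect modules $M(x)\otimes h^y$.

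The genuine gap is in the converse, dualizable $\Rightarrow$ smooth and proper. For smoothness you argue that, since $\Ac^\vee \otimes^L \Ac \to \IRHom_{\Hmo}(\Ac,\Ac)$ is a Morita equivalence, the diagonal $\Delta$ ``is represented by an object of $\Ac^\vee \otimes^L \Ac$ and is in particular perfect over $\Ac^{\op}\otimes\Ac$.'' First, a Morita equivalence is not essentially surjective on objects; it only places $\Delta$ in the thick subcategory of $D(\Ac^{\op}\otimes\Ac)$ generated by the image of $\Ac^\vee\otimes^L\Ac$ --- that much is repairable. What is not repairable as written is the jump to perfectness: the generators of that image are bimodules of the form $M \boxtimes h_x$ with $M \in \Ac^\vee$ merely \emph{pseudo-perfect}, and to conclude that such bimodules (hence $\Delta$) are perfect over $\Ac^{\op}\otimes\Ac$ you would need to know that pseudo-perfect $\Ac^{\op}$-modules are perfect. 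That implication is precisely a consequence of the smoothness you are trying to prove, and it fails for general $\Ac$ (e.g.\ $\k$ is pseudo-perfect but not perfect over $\k[\epsilon]/(\epsilon^2)$); so the step is circular. Your properness step suffers from the same defect in vaguer form: unwinding the triangle identity at a pair $(x,y)$ expresses $\Hom^\bullet_{\Ac}(x,y)$ as a coend of complexes of the shape $\Hom^\bullet_{\Ac}(x,z)\otimes N(y)$ with $N$ pseudo-perfect, which presupposes the very perfectness over $\k$ that you want to extract. A correct converse needs an idea absent from your sketch, for instance: $D(\Ac)$ is always dualizable in the large (colimit-preserving) sense with dual $D(\Ac^{\op})$ via the diagonal; since duals are unique and any triangulated equivalence preserves compact objects, the $\Hmo$-dual of $\Ac$ is forced to be Morita equivalent to $\Ac^{\op}$, and the given duality data differs from the diagonal one by an invertible bimodule twist. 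Compactness of the given coevaluation object then transports along this twist to perfectness of $\Delta$ (smoothness), and the condition that the given evaluation take values in $\Perf_\k$ makes a set of compact generators pseudo-perfect, whence all perfect modules --- in particular the representables --- are pseudo-perfect (properness). This is the content of the cited lemma, and your shortcut does not recover it.
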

\begin{proof}
Lemma 2.8 in \cite{toen-vaquie}.
\end{proof}

\subsection{The $2$-periodic case}

Let $\Vectt$, resp. $\Modt_\k$, be the category of $\Zt$-graded $\k$-vector spaces, resp. cochain
complexes, equipped with the $\Zt$-graded tensor product. The functor $\Sigma$ of shift of grading
on these categories satisfies $\Sigma^2=\Id$. 

We have an obvious $\Zt$-graded analogue of the concept of a dg-category: a small category enriched
over $\Modt_{\k}$. We refer to these structures as $2$-periodic, or $\Zt$-graded, dg-categories and
will leave out the extra adjective when it is obvious from the context. 
We will denote by $\dgcat^{(2)}$ the category of $\Zt$-graded dg-categories and their dg-functors. 

All the aspects of the homotopy theory of dg-categories and their dg-modules,
as recalled in \S \ref{subsec:morita}, can be extended to the $\Zt$-graded case
without any substantial changes. A convenient way to compare to the $\ZZ$-graded theory of 
\S \ref{subsec:morita} is as follows \cite[\S 5.1]{dyckerhoff-compact}. Note that
objects of $\Modt_\k$ can be seen as $2$-periodic $\ZZ$-graded cochain complexes over $\k$,
i.e., as dg-modules over the commutative dg-algebra
\[
	\k[u, u^{-1}], \quad \deg(u)=2, \, du=0.
\]
Under this identification, the $\Zt$-graded tensor product corresponds to $\otimes_{\k[u^{\pm 1}]}$.
We have an adjunction
\begin{equation}
	P: \Mod_\k \longleftrightarrow \Mod_{\k[u^{\pm 1}]}=\Modt_\k: F
\end{equation}
where $F$ is the forgetful functor, and $P$ is the functor of 2-periodization given by
\[
P(V^\bullet) = V^\bullet\otimes_\k \k[u^{\pm 1}], \quad P(V^\bullet)^{\bar i} = \bigoplus_{i\equiv \bar i \, \on{mod}\,  2} V^i,\,\,
\bar i\in \Zt.
\]
As explained in {\em loc. cit.}, this is a Quillen adjunction of model categories.
Applying this adjunction on the level of Hom-complexes, we get an adjunction
\begin{equation}\label{eq:adjunction:periodic}
P: \dgcat \longleftrightarrow \dgcatt: F
\end{equation}
The {\em quasi-equivalence model structure} on $\dgcatt$ is defined by:
\begin{itemize}
\item[(QW$^{(2)}$)] Weak equivalences are quasi-equivalences, i.e., morphisms taken by $F$
into quasi-equivalences in $\dgcat$.

\item[(QF$^{(2)}$)] Fibrations are defined by the right lifting property with respect to the set of
generating trivial cofibrations that is obtained by applying $P$ to the generating set in \cite{tabuada}.
This leads to the description of fibrations which is the $\Zt$-graded version of (QF).

\item[(QC$^{(2)}$)] Cofibrations are defined by the left lifting property with respect to trivial
fibrations. 
\end{itemize}

As observed in \cite[\S 5.1]{dyckerhoff-compact}, this indeed defines a model structure such that
\eqref{eq:adjunction:periodic} becomes a Quillen adjunction. We denote by $\Heq^{(2)}$ the homotopy
category of this model structure. 

All results and definitions recalled in \S \ref{subsec:morita} have obvious 2-periodic case
analogues. In particular, we will denote by $\Hmo^{(2)}$ the Morita homotopy category of $\dgcatt$
and will refer to $2$-periodic versions of other statements in \S \ref{subsec:morita} without
further explanation.

\section{Loop factorizations}
\label{section:loop}
\subsection{$\L$-categories and the cyclic category}
\label{subsec:lcat}

\begin{Defi}
 By a $\L$-{\em category} we mean a pair $(\C, w)$, where $\C$ is a category and $w: \Id_\C\Rightarrow \Id_\C$
is a natural transformation. Thus $w$ is a system of morphisms  $w_x: x\to x$ given for each $x\in\Ob(\C)$
and such that $fw_x=w_yf$ for each morphism $f: x\to y$. If $w$ is clear from the context, we will omit it from the notation. 
\end{Defi}

This definition is due to Drinfeld \cite{drinfeld}. Here are two reformulations. 
First, let us denote by $\L=\{0,1,2,...\}$ the additive monoid of non-negative integers, and let $B\L$ be the category
with one object corresponding to $\L$. Since $\L$ is commutative, $B\L$ is a symmetric monoidal category. 
 A  $\L$-category is the same as a category with action of $B\L$.

Second, let $\LSet$ be the category of $\L$-sets, i.e., of sets with a $\L$-action. 
Given $\L$-sets $A$ and $B$, we define the $\L$-set
\[
  A \otimes_{\L} B = A \times B / \{ (n+a, b) \sim (a, n+b), \,\, n\in\L \}.
\]
This operation makes   $\LSet$ into a   symmetric monoidal  category, with unit  object $\L$
(considered as an $\L$-set). Thus a morphism of $\L$-sets $A\otimes_\L B\to C$ is the same as an
$\L$-bilinear map $A\times B\to C$. 

\begin{prop}
Let $\C$ be a small category. The two following sets of data are in bijection:
\begin{enumerate}

\item  Structures of a $\L$-category on $\C$, i.e., natural transformations 
$w: \Id_\C\Rightarrow \Id_\C$.

\item Enrichments of $\C$ in  
$\LSet$, i.e.,  ways of defining $\L$-action on each $\Hom_\C(x,y)$ so that the composition is $\L$-bilinear. 

\end{enumerate}

\end{prop}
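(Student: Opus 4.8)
The plan is to exploit the fact that $\L=\ZZ_+$ is the free monoid on a single generator, so that an $\L$-action on a set is \emph{precisely} the choice of one self-map of that set, namely the action of the generator $1\in\L$; the action of a general $n$ is then forced to be its $n$-fold iterate. Under this dictionary, an $\LSet$-enrichment of $\C$ amounts to specifying, for every pair $x,y$, a single self-map of $\Hom_\C(x,y)$ that is compatible with composition, and the whole proof consists in matching this self-map with the natural transformation $w$. I would also record, as in the preceding discussion, that a morphism of $\L$-sets $A\otimes_\L B\to C$ is the same datum as an $\L$-bilinear map $A\times B\to C$, i.e.\ a map $\phi$ with $\phi(n\cdot a,b)=\phi(a,n\cdot b)=n\cdot\phi(a,b)$.

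Starting from a $\L$-structure $w$, I would let the generator $1$ act on $\Hom_\C(x,y)$ by $f\mapsto w_y\circ f$. Naturality of $w$ gives $w_y\circ f=f\circ w_x$, so this is symmetric, and an easy induction using naturality at each step yields $n\cdot f=w_y^n\circ f=f\circ w_x^n$. I would then check that composition is $\L$-bilinear. For $g\in\Hom_\C(y,z)$ and $f\in\Hom_\C(x,y)$ the identity $(n\cdot g)\circ f=w_z^n\,g\,f=n\cdot(g\circ f)$ is immediate, while $g\circ(n\cdot f)=g\,w_y^n\,f=w_z^n\,g\,f$ uses naturality of $w$ applied to $g$. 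By the translation above, this is exactly the statement that composition is a morphism of $\L$-sets, so the enrichment is well defined.

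Conversely, from an $\LSet$-enrichment with generator action written $1\star(-)$, I would set $w_x:=1\star\id_x\in\Hom_\C(x,x)$. Naturality is then a direct consequence of $\L$-bilinearity: for $f\colon x\to y$ one has $f\circ w_x=f\circ(1\star\id_x)=1\star(f\circ\id_x)=1\star f$ and likewise $w_y\circ f=(1\star\id_y)\circ f=1\star f$, whence $f\circ w_x=w_y\circ f$. Finally I would verify that the two assignments are mutually inverse. Going $w\mapsto(\text{enrichment})\mapsto w'$ gives $w'_x=1\cdot\id_x=w_x\circ\id_x=w_x$. Going $(\star)\mapsto w\mapsto(\text{new action})$ recovers the generator action via $w_y\circ f=(1\star\id_y)\circ f=1\star f$, so the new action of $1$ agrees with $\star$; since the entire $\L$-action is determined by that of the generator, the two enrichments coincide.

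The only genuine content, and the one place to be careful, is the identity $n\cdot f=w_y^n\,f=f\,w_x^n$ together with its interaction with bilinearity: everything collapses to the single naturality square $w_y f=f w_x$ and its iterates. The main thing to get right is that ``$\L$-bilinear'' is interpreted through $\otimes_\L$ as the two-sided compatibility $(n\cdot g)\circ f=n\cdot(g\circ f)=g\circ(n\cdot f)$; once this translation is pinned down, no calculation beyond the elementary inductions above is required, and I expect no substantive obstacle.
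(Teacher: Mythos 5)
Your proof is correct and follows exactly the same construction as the paper: from $w$ you define the action by $n\cdot f = w_y^n f = f w_x^n$, and from an enrichment you recover $w_x = 1\star \id_x$, which is precisely the correspondence the paper states before leaving the details to the reader. The details you supply (bilinearity via the naturality square, the mutual-inverse check, and the observation that the action is determined by the generator since $\L$ is free on one element) are the intended ones, so there is nothing to correct.
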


\begin{proof}
Given $w$, we define, for any $f: x\to y$, the morphism $n+f$ as $fw_x^n=w_y^nf$. Given an enrichment, i.e.,
system of actions of $\L$ on each $\Hom_\C(x,y)$, we define $w_x = 1+ \Id_x$. The details are left to the reader.
\end{proof}

\begin{exa} \label{exa:circular} Let $n \ge 0$ and consider the circular quiver $Q^n$ with set of vertices given 
	by $\ZZ/(n+1)$ and, for every $i \in \ZZ/(n+1)$, an arrow from $i$ to $i+1$. Let $\Q^n$ be
	the category freely generated by $Q^n$. Thus $\Q^0=B(\L)$. 
	The category $\Q^n$ is admits a natural
	$\L$-category structure where $w_i: i\to i$  is the cycle
	of degree $1$ at $i$.
	\end{exa}

 We  define an $\L$-{\em functor} between $\L$-categories  as  an enriched functor. Explicitly, if we write our $\L$-categories
 as $(\C, w)$ and $(\C',w')$, then a $\L$-functor between them is an ordinary functor $F: \C\to \C'$ such that
 $F(w_x)=w'_{F(x)}$ for each $x\in\Ob(\C)$. 
  The set of $\L$-functors between $\L$-categories $\C$ and $\C'$ is denoted by $\Fun_{\L}(\C,\C')$.
 We denote by $\LCat$ the
  category of $\L$-categories with morphisms given by $\L$-functors. 
  
  Further, $\Fun_{\L}(\C,\C')$ is the set of objects of a category $\IFun_{\L}(\C,\C')$
  whose morphisms are natural transformations $\eta: F \to G$ of $\L$-functors. Note that 
   $\IFun_{\L}(\C,\C')$ itself carries a structure of a $\L$-category: for $\eta$ as above we
   define $(n+\eta)_x = n+\eta_x: F(x)\to G(x)$ for $x\in\Ob(\C)$ and $n\in\ZZ_+$.

  \vskip .2cm
  
  Given $\L$-categories $\C$ and $\C'$, we define a $\L$-category $\C \otimes_{\L} \C'$ with set of
objects $\Ob (\C) \times \Ob (\C)$ and morphisms given by 
\[
  \Hom_{\C \otimes_{\L} \C'}( (x,x'), (y,y')) := \Hom_{\C}(x,y) \otimes_{\L} \Hom_{\C'}(x',y')
\]
where $x,y$ and $x',y'$ are objects of $\C$ and $\C'$, respectively. This operation provides a
monoidal structure on $\LCat$ with unit given by the $\L$-category $\Q^0$.   We have an adjunction
\[
  \Fun_{\L}(\C \otimes_{\L} \C', \D) \cong
  \Fun_{\L}(\C, \IFun_{\L}(\C', \D))
\]
which shows that the monoidal structure on $\LCat$ is closed.

\Par{The cyclic category, cyclic ordinals and cyclic objects.}
We recall Connes' definition of the cyclic category $\Lambda$, see \cite{connes}. The objects of $\Lambda$ are given by
nonnegative integers where we denote the object corresponding to $n \ge 0$ by $\cn$. We use the map 
\begin{equation}\label{cyc-ordinals-roots}
  \ZZ/(n+1) \to \CC, \quad  k \mapsto \exp\bigl (\frac{2 \pi i k}{n+1}\bigr)
\ee
to identify the elements of $\ZZ/(n+1)$ with the set of ($n+1$)st roots of unity contained in the unit circle
$S^1 \subset \CC$. A map $f: \cm \to \cn$ in $\Lambda$ is given by a homotopy class of continuous
maps $f: S^1 \to S^1$ of degree $1$ mapping $\ZZ/(m+1)$ into $\ZZ/(n+1)$.

Following Drinfeld \cite{drinfeld}, we provide an alternative description of   $\Lambda$.

\begin{prop}\label{prop:quiver} There is a fully faithful functor
	\[
		\FC: \Lambda \lra \LCat, \quad \cn\mapsto  \Q^n
	\]
	which embeds   $\Lambda$ into the category of $\L$-categories. \qed
\end{prop}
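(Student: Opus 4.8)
The plan is to make both Hom-sets completely explicit and to identify each with one and the same combinatorial object, namely non-decreasing $\ZZ$-equivariant integer sequences; $\FC$ will then be the induced bijection, and only functoriality will require work. First I would analyze the target side. Write $a_i\colon i\to i+1$ for the generating arrows of $Q^m$. Since $\Q^n$ is \emph{freely} generated by $Q^n$, every morphism $j\to j'$ in $\Q^n$ is a unique path, classified by its length $\ell\in\L$ subject to $\ell\equiv j'-j \pmod{n+1}$, composition adds lengths, and $w_j$ is the unique loop of length $n+1$ at $j$. An $\L$-functor $F\colon\Q^m\to\Q^n$ is determined by its object map $\phi$ together with the lengths $\ell_i\ge 0$ of the paths $F(a_i)$. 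Because $w_i=a_{i+m}\cdots a_i$ is the length-$(m+1)$ loop at $i$, the image $F(w_i)$ is a loop at $\phi(i)$ of length $\sum_{j=0}^m \ell_j$, so the $\L$-functor condition $F(w_i)=w_{\phi(i)}$ unwinds to the single winding constraint $\sum_{i=0}^m \ell_i = n+1$ (and this one identity forces all the others, by naturality of $w$). Hence $\Hom_{\LCat}(\Q^m,\Q^n)$ is in canonical bijection with the data $(\phi(0),\ell_0,\dots,\ell_m)$, where $\phi(0)\in\ZZ/(n+1)$, $\ell_i\ge 0$ and $\sum\ell_i=n+1$; equivalently, with non-decreasing $\phi\colon\ZZ\to\ZZ$ satisfying $\phi(i+m+1)=\phi(i)+(n+1)$, taken modulo the shift $\phi\sim\phi+(n+1)$.

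Next I would give the matching description of $\Hom_\Lambda(\cm,\cn)$. Representing a morphism by a weakly monotone (orientation-preserving) degree-one map and lifting it along the universal cover $\RR\to S^1=\RR/\ZZ$ produces a non-decreasing $\tilde f\colon\RR\to\RR$ with $\tilde f(x+1)=\tilde f(x)+1$ sending $\tfrac1{m+1}\ZZ$ into $\tfrac1{n+1}\ZZ$. Setting $G(i)=(n+1)\,\tilde f(\tfrac{i}{m+1})$ gives a non-decreasing $G\colon\ZZ\to\ZZ$ with $G(i+m+1)=G(i)+(n+1)$. A marked homotopy moves the value at each marked point continuously inside the discrete set $\tfrac1{n+1}\ZZ$, hence keeps it constant, so $G$ is a homotopy invariant; conversely two monotone lifts agreeing on marked points are joined by the monotone linear homotopy, which respects the marking. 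The only remaining ambiguity is the choice of lift, $\tilde f\mapsto\tilde f+k$, i.e. $G\sim G+(n+1)$. Thus $\Hom_\Lambda(\cm,\cn)$ is identified with exactly the same set of non-decreasing $\ZZ$-equivariant $G$ modulo shift, both Hom-sets having cardinality $(n+1)\binom{n+m+1}{m}$.

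I would then \emph{define} $\FC$ on morphisms as this common identification: a class $[G]$ is sent to the $\L$-functor with object map $i\mapsto G(i)\bmod(n+1)$ and generating-arrow lengths $\ell_i=G(i+1)-G(i)$. By construction $\FC$ is bijective on every Hom-set, so full faithfulness is automatic once $\FC$ is shown to respect composition and identities. The identity of $\cn$ corresponds to $G=\id_\ZZ$, giving all $\ell_i=1$, which is precisely $\id_{\Q^n}$. For composition, note that on the cyclic side composing monotone maps composes their lifts, so $G_{gf}=G_g\circ G_f$. On the $\LCat$ side I would check that transporting the length-$\ell_i$ path $F(a_i)$ by a second $\L$-functor $G$ (with lift $\psi$) yields a path of length $\psi(\phi(i+1))-\psi(\phi(i))$: the image of a length-one generator $b_j$ has length equal to the $j$-th difference of $\psi$, and these lengths add along the arc from $\phi(i)$ to $\phi(i+1)$. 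Since such differences are invariant under the shift ambiguities of $\phi$ and $\psi$, the composite $\L$-functor carries exactly the differences of $\psi\circ\phi$, i.e. corresponds to $[\psi\circ\phi]$, matching the cyclic side. This establishes that $\FC$ is a fully faithful functor.

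The main obstacle will be precisely this composition check on the $\LCat$ side: one must track how paths in the free category $\Q^n$ are transported by an $\L$-functor and confirm that the winding constraint $\sum\ell_i=n+1$ makes the bookkeeping of lengths consistent modulo the shift ambiguity. Phrasing everything through the monotone lifts $G$ is what keeps this transparent, reducing the functoriality of $\FC$ to the single identity $G_{gf}=G_g\circ G_f$.
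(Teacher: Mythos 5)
Your proof is correct, but there is essentially nothing in the paper to compare it against: Proposition \ref{prop:quiver} is stated there with no argument at all (it carries an end-of-proof mark and is recorded as a known fact, following Drinfeld \cite{drinfeld}, whose paper contains this description of $\Lambda$ via $\L$-categories). What you have supplied is the missing verification, and your route is the natural self-contained one: identify both $\Hom_{\Lambda}(\cm,\cn)$ and $\Hom_{\LCat}(\Q^m,\Q^n)$ with the set of non-decreasing maps $G\colon\ZZ\to\ZZ$ satisfying $G(i+m+1)=G(i)+(n+1)$, modulo the shift $G\sim G+(n+1)$. Your analysis of the target side is right: since $\Q^n$ is free on the circular quiver, a morphism is determined by its length, an $\L$-functor by its object map and the lengths $\ell_i$ of the images of the generators, and the condition $F(w_i)=w_{\phi(i)}$ is exactly the single winding constraint $\sum_i\ell_i=n+1$. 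The telescoping computation showing that composition of $\L$-functors corresponds to composition of the lifts $G$, and the cardinality check $(n+1)\binom{n+m+1}{m}$ on both sides, are also correct. In effect you have reproved Drinfeld's observation rather than cited it, which is what a blind proof should do here.

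One point deserves emphasis, because you handled it correctly but without comment: the paper's stated definition of $\Lambda$ omits the word \emph{monotone}; it speaks only of homotopy classes of continuous degree-one maps carrying $\ZZ/(m+1)$ into $\ZZ/(n+1)$. Monotonicity is essential rather than cosmetic. Without it the proposition would be false: for instance, marked-homotopy classes of arbitrary degree-one maps $\co{1}\to\co{0}$ are classified by the integer difference of the lift values at the two marked points (these values are locally constant along any marked homotopy), so there would be infinitely many of them, whereas $\Hom_{\LCat}(\Q^1,\Q^0)$ has exactly two elements, namely $(\ell_0,\ell_1)\in\{(1,0),(0,1)\}$. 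Your insertion of ``weakly monotone'' recovers Connes' actual definition and is precisely what makes the two Hom-sets agree; a careful write-up should flag this interpretation of the paper's phrasing explicitly rather than make it silently.
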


More generally, by a  {\em (total) cyclic order} on a finite set $I$, $|I|=n+1>0$, we mean a class of total orders up to
the action of the group $\ZZ/(n+1)$ of cyclic rotations. Alternatively, a cyclic order can be defined
as a   ternary relation  of a certain kind \cite{huntington, sibilla-treumann-zaslow}.
 A finite set with a cyclic order will be called a {\em finite cyclic ordinal}. Each finite cyclic ordinal $I$ is isomorphic to
 some $\ZZ/(n+1)$ and so gives
 rise to a $\ZZ_+$-category $\Q^I$ as in Example \ref{exa:circular}.  We will sometimes replace $\Lambda$ by an
 equivalent large category $\bf\Lambda$ whose objects are all finite cyclic ordinals and
 \be
 \Hom_{\bf\Lambda}(I,J) := \Hom_{\LCat}(\Q^I, \Q^J). 
 \ee
 By a {\em cyclic} (resp.{\em cocyclic}) {\em object} in a category $\Cb$, we mean a contravariant (reap. covariant)
 functor $X:\Lambda\to\Cb$. Note that such functor canonically extends to $\bf\Lambda$, so we can speak about
 objects $X(I)\in\Cb$ for any finite cyclic ordinal $I$.

\subsection{Loop  factorizations  in $\L$-categories}\label{subsec:loopfac}
Let $(\C,w)$ (or simply $\C$) be an $\L$-category. 
 We define a {\em loop factorization in $\C$} to be a functor of $\L$-categories
$F: \Q^1 \to \C$. Explicitly, a loop factorization can be viewed a datum
\[
F=\bigl\{	\xymatrix{
	x_1 \ar@/_1ex/[r]_\varphi & x_0 \ar@/_1ex/[l]_\psi\\
}\bigr\}
\]
of objects and morphisms in $\C$
such that $\varphi \psi = w_{x_0}$ and $\psi \varphi =  w_{x_1}$.

\begin{ex}\label{ex:MF}
 Let $R$ be an associative ring, and $w\in R$ be a central element.
Let $\Perf_R$ be the category of  finitely generated projective left $R$-modules. Multiplication by $w$
makes $\Perf_R$ into a $\L$-category. A loop factorization in $\Perf_R$ is the same as a matrix
factorization of $w$ in the standard sense \cite{eisenbud}. Further, the construction of dg-categories of matrix
factorization extends to our context as follows. 

\end{ex}

Let $\k$ be a field and $\C$ be a  $\L$-category. 
Denote by $\k[\C]$ the $\k$-linear envelope of $\C$, i.e., the category with
the same objects as $\C$ and $\Hom_{\k[\C]}(x,y)$ being the $\k$-vector space spanned by the set $\Hom_\C(x,y)$.
For any two 
  loop factorizations: $F$ as above and $F'=\bigl\{	\xymatrix{
	x'_1 \ar@/_1ex/[r]_{\varphi'} & x'_0 \ar@/_1ex/[l]_{\psi'}\\
}\bigr\}
$ in
$\C$, we define the $\Zt$-graded $\k$-module $\Hom^\bullet(F,F')$ given by
\begin{equation}\label{eq:pathen}
	\begin{split}
		\Hom^0( F, F') &  = \Hom_{\k[\C]}(x_0,x_0') \oplus \Hom_{\k[\C]}(x_1,x_1')\\
		\Hom^1( F, F') & = \Hom_{\k[\C]}(x_1,x_0') \oplus \Hom_{\k[\C]}(x_0,x_1')
	\end{split}
\end{equation}
  Any element of $\Hom^\bullet(F,F')$ can be represented by a matrix
\[
  \begin{pmatrix}\alpha & \gamma\\ \delta & \beta \end{pmatrix}=
  \begin{pmatrix}\alpha & 0\\ 0 & \beta \end{pmatrix}+
  \begin{pmatrix}0 & \gamma\\ \delta & 0 \end{pmatrix}
\]
with $(\alpha,\beta) \in \Hom^0(F, F')$ and $(\gamma,\delta) \in \Hom^1(F, F')$. We define the differential on
$\Hom^\bullet(F,F')$ by the formulas
\begin{equation}\label{eq:pathendg}
  \begin{split}
  \begin{pmatrix} \alpha & 0\\ 0 & \beta \end{pmatrix} & \mapsto 
  \begin{pmatrix} 0 & \varphi'\\ \psi' & 0 \end{pmatrix} 
  \begin{pmatrix} \alpha & 0\\ 0 & \beta \end{pmatrix} -
  \begin{pmatrix} \alpha & 0\\ 0 & \beta \end{pmatrix} 
  \begin{pmatrix} 0 & \varphi\\ \psi & 0 \end{pmatrix}\\
  \begin{pmatrix} 0 & \gamma\\ \delta & 0 \end{pmatrix} & \mapsto 
  \begin{pmatrix} 0 & \varphi'\\ \psi' & 0 \end{pmatrix} 
  \begin{pmatrix} 0 & \gamma\\ \delta & 0 \end{pmatrix} +
  \begin{pmatrix} 0 & \gamma\\ \delta & 0 \end{pmatrix} 
  \begin{pmatrix} 0 & \varphi\\ \psi & 0 \end{pmatrix},
  \end{split}
\end{equation}
In this way we get a $\Zt$-graded dg-category, denoted  $\LF(\C)$,  whose objects are loop factorizations in $\C$
and the $\Hom$-complexes are given by the $\Hom^\bullet(F,F')$.

\begin{thm}\label{prop:lf} Associating to a $\L$-category $\C$ the dg category $\LF(\C)$,
gives a functor
	\[
		\LF: \LCat \lra \dgcat^{(2)}.
	\]
	 \end{thm}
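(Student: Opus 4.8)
The plan is to verify, in order, that for each $\L$-category $\C$ the data defining $\LF(\C)$ form a well-defined $\Zt$-graded dg-category, that every $\L$-functor $G\colon\C\to\C'$ induces a dg-functor $\LF(G)$, and that the assignment $G\mapsto\LF(G)$ respects identities and composition. Almost everything reduces to a single structural observation, so I would begin by reformulating the construction uniformly. To a loop factorization $F$, with $\varphi\colon x_1\to x_0$ and $\psi\colon x_0\to x_1$, I attach the odd ``twisted differential'' $d_F=\left(\begin{smallmatrix}0&\varphi\\\psi&0\end{smallmatrix}\right)$, regarded via \eqref{eq:pathen} as a degree-$1$ element of $\Hom^\bullet(F,F)$. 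The loop relations $\varphi\psi=w_{x_0}$ and $\psi\varphi=w_{x_1}$ then read $d_F^2=\mathrm{diag}(w_{x_0},w_{x_1})$, and the two cases of \eqref{eq:pathendg} unify into the single formula $d(\xi)=d_{F'}\,\xi-(-1)^{|\xi|}\,\xi\,d_F$ for $\xi\in\Hom^\bullet(F,F')$, where juxtaposition denotes block matrix multiplication induced by composition in the $\k$-linear envelope $\k[\C]$.

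With this setup the heart of the matter is the identity $d^2=0$. A direct expansion gives $d^2(\xi)=d_{F'}^2\,\xi-\xi\,d_F^2$, the two cross terms $d_{F'}\,\xi\,d_F$ cancelling because of the sign. Since $d_{F'}^2$ and $d_F^2$ are the diagonal matrices of the components of $w$, the required equality $d_{F'}^2\,\xi=\xi\,d_F^2$ is exactly the assertion that each matrix entry $g$ of $\xi$, being a morphism of $\C$, satisfies $w_y\circ g=g\circ w_x$. This is precisely the naturality of $w\colon\Id_\C\Rightarrow\Id_\C$, and it is the only place where the $\L$-category axiom (as opposed to an arbitrary choice of endomorphisms) enters; I expect this to be the conceptual crux, even though the computation itself is short. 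The remaining dg-category axioms are then formal: the Leibniz rule $d(\eta\xi)=(d\eta)\,\xi+(-1)^{|\eta|}\,\eta\,(d\xi)$ follows from the same expansion, with the interior $d_{F'}$ terms cancelling; the $\Zt$-grading is respected because block-diagonal and block-off-diagonal matrices multiply according to the addition on $\Zt$, so $d$ is odd and composition is even; and associativity and unitality are inherited from matrix multiplication over $\k[\C]$.

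For the action on morphisms, I would use that an $\L$-functor $G$ extends $\k$-linearly to a functor $\k[G]\colon\k[\C]\to\k[\C']$, and define $\LF(G)$ by sending a loop factorization $F$ to $G\circ F$ and a morphism $\xi$ to the matrix obtained by applying $\k[G]$ entrywise. That $G\circ F$ is again a loop factorization uses the defining property $G(w_x)=w'_{G(x)}$ of an $\L$-functor, which also yields $\k[G](d_F)=d_{G\circ F}$; combined with the additivity of $\k[G]$ and its compatibility with composition, this shows that entrywise application of $\k[G]$ commutes with the differential and with the block multiplication, so $\LF(G)$ is a dg-functor (it visibly preserves degrees and identities). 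Finally, functoriality of $\LF$ is immediate: $\LF(\id_\C)=\id_{\LF(\C)}$, and $\LF(G'\circ G)=\LF(G')\circ\LF(G)$ because $\k[G'\circ G]=\k[G']\circ\k[G]$, applied entrywise. The main obstacle, as noted, is isolating the role of the naturality of $w$ in the vanishing of $d^2$; everything else is routine bookkeeping with signs and block matrices.
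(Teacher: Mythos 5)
Your proposal is correct and takes essentially the same route as the paper, which simply asserts that the claim ``follows directly from the definitions, since the formulas \eqref{eq:pathen} and \eqref{eq:pathendg} are intrinsically functorial.'' Your write-up just makes explicit the details the paper leaves implicit — in particular the computation $d^2(\xi)=d_{F'}^2\,\xi-\xi\,d_F^2=0$, which uses exactly the naturality of $w:\Id_\C\Rightarrow\Id_\C$, and the entrywise action of $\k[G]$ defining $\LF(G)$.
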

\begin{proof} This follows directly from the definitions,  since the
	formulas \eqref{eq:pathen} and \eqref{eq:pathendg} are intrinsically functorial.
\end{proof}

\subsection{The cyclic model for $D^{(2)}(A_n{\textrm{-}} \on{mod})$ via matrix factorizations}
\label{subsec:graded-factorizations}

 \Par{The root category and its cyclic symmetry.}
 Consider the Dynkin quiver of type $A_n$ with its standard orientation:
  \[
	A_n = \bigl\{ \xymatrix{
		\overset{1}{\bullet} \ar[r] & \overset{2}{\bullet} \ar[r] & \dots \ar[r] &
		\overset{n}{\bullet}
	}\bigr\}.
\]
 Denote $A_n\mod$ the category of finite-dimensional representations of $A_n$ over $\k$, i.e.,
of diagrams $V_1\to \cdots\to V_n$ of finite-dimensional $\k$-vector spaces,
and let 
  $D^{(2)}(A_n\mod)$  be the derived category of 2-periodic complexes over $A_n\mod$.
The latter category is known as the {\em root category} for $A_n$ because its indecomposable
objects are in bijection with  roots of the root system of type $A_n$, see \cite{happel}. More precisely, 
for each $1\leq i\leq j\leq n$ we  denote $\k_{[i,j]}\in A_n\mod$ 
the indecomposable object
having
$\k$ in positions from the interval $[i,j]$ and $0$ elsewhere. 
Then indecomposable objects in $D^{(2)}(A_n\mod)$ are
 \be
  {\bf e}_{ij} = \begin{cases}
  \k_{[n-j+1, n-i]}, \text{ if } i<j;\\
  \Sigma\,  \k_{[n-i+1, n-j]}, \text{ if } i>j,
  \end{cases}, \quad i,j\in \{0,1,\cdots, n\}, \, i\neq j,
  \ee
  so that $\Sigma {\bf e}_{ij}\simeq {\bf e}_{ji}$ in all cases. 
The Grothendieck group $K(D^{(2)}(A_n\mod))$ is identified with the root lattice for $A_n$, and the
class of ${\bf e}_{ij}$ is the standard root $e_{ij}$. 

Further, $D^{(2)}(A_n\mod)$ carries a self-equivalence known as the (derived) {\em Coxeter functor}
\[
C_n: D^{(2)}(A_n\mod) \lra D^{(2)}(A_n\mod), \quad C_n^{n+1} \simeq \Id. 
\]
It can be defined either as the composition of derived reflection functors \cite{gelfand-manin}, or
characterized intrinsically by the condition that $\Sigma\circ C_n^{-1}$ is the {\em Serre functor}
of $D^{(2)}(A_n\mod)$, i.e., we have natural isomorphisms
\[
\Hom_{D^{(2)}(A_n\mod)}(V^\bullet, W^\bullet)^* \simeq  \Hom_{D^{(2)}(A_n\mod)}
(W^\bullet, \Sigma C_n^{-1}(V^\bullet)).
\]
The   automorphism of $K(D^{(2)}(A_n\mod))$ induced by $C_n$, is the Coxeter transformation $c_n$
from the Weyl group $W_{A_n}=S_{n+1}$. This transformation is the  $(n+1)$-cycle: $c_n=(012\cdots n)$.  
Being an equivalence, $C_n$ preserves indecomposable objects and the action on such objects corresponds
to the action of $c_n$ on the roots. In particular, the action on simple modules is
\begin{equation}\label{eq:coxeter-irreducible}
\k_{[n,n]} \longmapsto \k_{[n-1,n-1]} \longmapsto \cdots \longmapsto 
\k_{[1,1]} \longmapsto \Sigma \, \k_{[1,n]}. 
\end{equation}

 \begin{ex}\label{ex:triangles-in-polygon}
  Let $P_{n+1}$ be the convex plane $(n+1)$-gon with vertices labelled by
 $0,1,\cdots, n$ in the counterclockwise order. 
 We can represent ${\bf e}_{ij}$ as an oriented arc (side or diagonal) in $P_n$
 going from the vertex $i$ to the vertex $j$, so applying $\Sigma$ corresponds to change of
 orientation. 
  We say that a triple of distinct elements $(i,j,k)\in \cn \simeq \ZZ/(n+1)$ is 
 {\em in the counterclockwise cyclic order},
 if it can be brought by a cyclic rotation to a triple $(i',j',k')$ with $0\leq i'<j'<k'\leq n$. For any such triple we have a distinguished triangle
 in $D^{(2)}(A_n\mod)$:
 \[
 {\bf e}_{ij} \lra {\bf e}_{ik}\lra {\bf e}_{jk} \lra \Sigma {\bf e}_{ij} 
 \]
 which can be depicted as a triangle inscribed into $P_{n+1}$, similarly to
 \eqref{eq:geom-triangle}: 
 
 \begin{center}
 \begin{tikzpicture}
 \node (A) at (11.25:3){}; 
 \node (B) at (33.75:3){}; 
 \node (C) at (56.25:3){}; 
 \node (D) at (78.75:3){}; 
 \node (E) at (101.25:3){}; 
 \node (F)  at (123.75:3){}; 
 \node (G) at (146.25:3){}; 
 \node (H) at (168.75:3){}; 
 \node (I) at (191.25:3){};
 \node (J) at (213.75:3){};
 \node (K) at (236.25:3){}; 
 \node (L) at (258.75:3){}; 
 \node (M) at (281.25:3){}; 
 \node (N) at (303.75:3){}; 
 \node (O) at (326.25:3){}; 
 \node (P) at (348.75:3){};

  \fill (A) circle (0.05);
  \fill (B) circle (0.05);
  \fill (C) circle (0.05);
   \fill (D) circle (0.05);
\fill (E) circle (0.05);
 \fill (F) circle (0.05);
 \fill (G) circle (0.05);
 \fill (H) circle (0.05);
\fill (I) circle (0.05);
 \fill (J) circle (0.05);
\fill (K) circle (0.05);
\fill (L) circle (0.05);
  \fill (M) circle (0.05);
 \fill (N) circle (0.05);
 \fill (O) circle (0.05);
 \fill (P) circle (0.05);
 
 \draw (A) -- (B) --(C) --(D)  -- (E) -- (F) -- (G) -- (H) -- (I) -- (J) -- (K) -- (L) -- (M) -- (N) --
 (O) -- (P) -- (A);  
 
 \node at (101.25:3.5){$0$}; 
 \node at (125.75:3.5){$1$}; 
 \node at (78.75:3.5){$n$}; 
 \node at (56.25:3.5){$n-1$}; 
 \node at (168.75:3.5){$i$}; 
 \node at (236.25:3.5){$j$}; 
 \node at (11.25:3.5){$k$}; 

\draw[->, line width = .3mm] (H) -- (K); 
\draw[->, line width = .3mm] (K) -- (A); 
\draw[->, line width = .3mm] (H) -- (A); 

 \centerarc[->](H)(-60:-5:.6)
 \centerarc[->](A)(180:210:.6)
 \centerarc[->](K)(38:108:.6)
 
 \node at (0,.85){${\bf e}_{ik}$};
\node at (-2, -.8){${\bf e}_{ij}$}; 
\node at (1.5, -.8){${\bf e}_{jk}$};

 \end{tikzpicture}
 \end{center}
 \end{ex}
  
\Par {Graded matrix factorizations.} 
 Let $L$ be an abelian group, and $R=\bigoplus_{a\in L} R_a$ be an $L$-graded associative $\k$-algebra (with $\k\subset R_0$). 
Let $w\in R_0$ be a central element, and $\Perf^L_R$ be the category of finitely generated projective $L$-graded left
$R$-modules (and their morphisms of degree 0).
As in Example \ref{ex:MF}, $\Perf^L_R$ is then a $\k$-linear $\L$-category. We define the 
  dg-category
$\MF^L(R,w)$
to have, as objects, loop factorizations in $\Perf^L_R$ and $\Hom$-complexes 
 defined
analogously to \eqref{eq:pathen} but with $\Hom_{\k[\C]}$, replaced by $\Hom_{\Perf^L_R}$.
We call $\MF^L(R,w)$
the dg-category of  {\em  $L$-graded matrix factorizations} of $w$. 
As with any category of matrix factorizations, the dg-category $\MF^L(R,w)$ is  perfect. In particular, the category  
 $H^0\MF^L(R,w)$ is triangulated.  
 For $i \in L$,  and $M\in\Perf^L_R$, we denote by $M(i)$ the graded $R$-module with $M(i)_n = M_{i+n}$.

Let $L = \ZZ/(n+1)$ where $n \ge 0$. We consider the polynomial ring $R = \k[z]$ as an
	$L$-graded $\k$-algebra with $\deg(z) = 1$,  and take  $w = z^{n+1}$. We 
	introduce the notation
	\[
	\T^n = \MF^{\ZZ/(n+1)}(\k[z], z^{n+1}).
	\]
	 The {\em rank} of an object 
	$F=\bigl\{	\xymatrix{
	M_1 \ar@/_1ex/[r]_\varphi & M_0 \ar@/_1ex/[l]_\psi\\
}\bigr\}\in\T^n$  is, by the definition, the rank of $M_0$ and $M_1$ as free
  $\k[z]$-modules  (these ranks are equal). 
	The shift of grading gives an equivalence of dg-categories
	\[
	\Pi_n: \T^n \lra \T^n, \quad\Pi_n \bigl\{	\xymatrix{
	M_1 \ar@/_1ex/[r]_\varphi & M_0 \ar@/_1ex/[l]_\psi\\
}\bigr\} = 
\bigl\{	\xymatrix{
	M_1(1) \ar@/_1ex/[r]_{\varphi(1)} & M_0(1) \ar@/_1ex/[l]_{\psi(1)}\\
}\bigr\}, \quad \Pi_n^{n+1}=\Id. 
	\]

	  The following is an  adaptation of the main result of \cite{takahashi}.  	We omit the proof as well as the proofs of
	  the next few followup statements, as they are similar to {\em loc. cit.}
	  
	\begin{thm}\label{thm:A-n-factorizations}
	 
	 (a) The triangulated category $H^0\T^n$ is equivalent to $D^{(2)}(A_n\mod)$. 
	 
	 (b) Under this equivalence, the functor induced by $\Pi_n$, corresponds to the derived
	Coxeter functor $C_n$. \qed
	 	 \end{thm}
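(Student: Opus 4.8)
The plan is to establish the equivalence on the level of indecomposable objects, where the one-variable base ring makes the classification completely explicit, and then to promote it to a triangle equivalence by matching the distinguished triangles; part (b) is then pinned down cleanly through the Serre functor rather than by inspecting the action of $\Pi_n$ object by object. This is the graded reformulation of Takahashi's argument \cite{takahashi}, so at several points I would transcribe his computations into the present $\ZZ/(n+1)$-graded setting.

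\emph{Classification of objects.} First I would classify the indecomposables of $H^0\T^n$. An object is a loop factorization $(\varphi,\psi)$ of graded maps between finite free $\k[z]$-modules with $\varphi\psi=\psi\varphi=z^{n+1}$. Because $\k[z]$ is a principal ideal domain, I can diagonalize $\varphi$ by invertible graded changes of basis (a graded Smith normal form); the same changes of basis act on $\psi$, and the factorization constraint forces the result to be a direct sum of \emph{rank one} objects $\{z^{a}\colon R(i')\to R(i),\ z^{n+1-a}\colon R(i)\to R(i')\}$ with $1\le a\le n$ and $i\in\ZZ/(n+1)$ (the exponents $a\in\{0,n+1\}$ give contractible factorizations, which vanish in $H^0\T^n$). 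This produces exactly $n(n+1)$ indecomposables, matching the number of roots of $A_n$. I would then record the bijection with the objects ${\bf e}_{ij}$ of Example \ref{ex:triangles-in-polygon}, sending the oriented arc from $i$ to $j$ to the rank one factorization whose exponent $a$ records the cyclic distance from $i$ to $j$ and whose grading shift records the basepoint, arranged so that reversing orientation ${\bf e}_{ij}\mapsto{\bf e}_{ji}$ corresponds to interchanging $\varphi$ and $\psi$, i.e.\ to the cohomological shift $\Sigma$.

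\emph{The triangle equivalence (a).} Using this bijection I would define a functor $H^0\T^n\to D^{(2)}(A_n\mod)$ on indecomposables and extend additively. Full faithfulness is a direct computation: the $\Hom$-complexes \eqref{eq:pathen}--\eqref{eq:pathendg} between two rank one factorizations are explicit, and one checks that their cohomology reproduces the graded $\Hom$- and $\Ext$-spaces between the corresponding ${\bf e}_{ij}$ in the root category. Essential surjectivity is the classification. The main point --- and the step I expect to be the real obstacle --- is \emph{exactness}: the functor must carry distinguished triangles to distinguished triangles. I would do this by exhibiting, for each counterclockwise triple $(i,j,k)$, a short exact sequence of graded matrix factorizations whose associated triangle in $H^0\T^n$ is ${\bf e}_{ij}\to{\bf e}_{ik}\to{\bf e}_{jk}\to\Sigma{\bf e}_{ij}$; since these triples triangulate the polygon $P_{n+1}$ and the corresponding triangles generate $D^{(2)}(A_n\mod)$, matching them identifies the two triangulated structures. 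Alternatively, one may quote the graded form of Takahashi's theorem for $\MF^{\ZZ}(\k[z],z^{n+1})\simeq D^b(A_n\mod)$ and pass to the $\ZZ/(n+1)$-graded category, which amounts to killing the grading shift by $n+1$; by Eisenbud $2$-periodicity this shift equals $\Sigma^2$, so the reduction is exactly the orbit construction $D^b(A_n\mod)/\Sigma^2$ defining the root category. The care needed there is the usual subtlety of triangulated orbit categories.

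\emph{The Coxeter functor (b).} Here $\Pi_n$ is precisely the grading shift $(1)$. On $K_0(H^0\T^n)$, identified with the root lattice, $(1)$ rotates the arcs $e_{ij}\mapsto e_{i+1,j+1}$, which is the Coxeter transformation $c_n=(012\cdots n)$, so $\Pi_n$ and $C_n$ induce the same automorphism of $K_0$. To upgrade this to a natural isomorphism $\Pi_n\cong C_n$ I would use the intrinsic characterization recalled before the statement: $\Sigma\circ C_n^{-1}$ is the Serre functor of $D^{(2)}(A_n\mod)$. The Serre functor of a graded matrix factorization category of a homogeneous $w$ in $N$ variables is $\Sigma^{N}$ twisted by the Gorenstein parameter $a=\deg(w)-\sum_i\deg(z_i)$; in our case $N=1$ and $a=(n+1)-1=n$, so the Serre functor of $H^0\T^n$ is $\Sigma\circ(n)$. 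Since $n\equiv -1$ and $\Sigma^2=\Id$ on the $2$-periodic category, one computes $\Sigma\circ(\Sigma\circ(n))^{-1}=(-n)=(1)=\Pi_n$, so $\Pi_n$ is exactly $\Sigma$ composed with the inverse Serre functor. Hence $\Pi_n$ satisfies the defining property of $C_n$, yielding the required natural isomorphism.
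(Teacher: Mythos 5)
The paper offers essentially no proof to compare against: Theorem \ref{thm:A-n-factorizations} is stated with the remark that it is an adaptation of the main result of \cite{takahashi}, with proofs omitted, and the only mechanism the paper records is the tilting dg-functor $\Phi_n^{\triangleleft}: C^{(2)}(A_n\mod)\to\T^n$, $V^\bullet\mapsto \on{RHom}_{A_n}(\R_n^{\triangleleft},V^\bullet)$, introduced immediately after the statement to realize the equivalence with ${\bf e}_{ij}\mapsto[i,j]$. Your proposal is the same Takahashi-style adaptation in outline, and much of it is sound: the classification of indecomposables (minor caveat: $\ZZ/(n+1)$-homogeneous elements of $\k[z]$ have the form $z^d\,g(z^{n+1})$, not just monomials, so either pivot on lowest-order terms or argue through the Eisenbud correspondence with graded $\k[z]/(z^{n+1})$-modules), the count $n(n+1)$ matching the roots, and the identification of $\Sigma$ with orientation reversal.

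The genuine gap is the exactness step of your primary route for (a). As written, you never actually construct a functor: a bijection on indecomposables together with separately computed isomorphisms of $\Hom$-spaces does not assemble into a functor (compatibility with composition is the whole difficulty), and even granted an additive functor commuting with $\Sigma$, sending the generating triangles of Proposition \ref{prop:cones} to distinguished triangles does not make it exact --- exactness concerns cones of \emph{arbitrary} morphisms, and a generating family of triangles does not control these under a merely additive functor. This is precisely the problem the tilting functor $\Phi_n^{\triangleleft}$ solves: it is a dg-functor, hence induces an exact functor on $H^0$, and full faithfulness on the generators plus generation by iterated cones (the mechanism used in Proposition \ref{prop:r-n-morita}) then yield the equivalence satisfying \eqref{eq:[i,j]-e-ij}. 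Your alternative route via orbit categories is valid, but it relocates the same difficulty into the theorems you would quote: Keller's triangulated orbit category theorem, the identification (Peng--Xiao) of the root category $D^{(2)}(A_n\mod)$ with $D^b(A_n\mod)/\Sigma^2$, and the liftability of $\ZZ/(n+1)$-graded factorizations to $\ZZ$-graded ones (which again rests on the classification). By contrast, your treatment of (b) is correct and is a genuinely different, cleaner argument than computing the action of the shift $(1)$ on indecomposables: once graded Auslander--Reiten/Serre duality is established for $H^0\T^n$, uniqueness of Serre functors forces $\Pi_n\cong C_n$ under any equivalence from (a). (Your Serre-functor formula is convention-sensitive in the $\ZZ$-graded setting --- one also finds $(n)\Sigma^{-1}$ --- but the ambiguity is by $\Sigma^2=(n+1)$, which becomes the identity after the $\ZZ/(n+1)$-folding, so your computation $\Sigma\circ S^{-1}=(1)=\Pi_n$ stands.)
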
 
		 
		 The rank one objects of $\T^n$ have the form 
		  \begin{equation}\label{eq:rank-1-fact}
	\begin{gathered}
	 \, [i,j] =   \bigl\{\xymatrix{
		    R(i) \ar@<.5ex>[r]^{z^{j-i}} & \ar@<.5ex>[l]^{z^{i-j}} R(j)
	    }\bigr\}, \quad i\neq j, \\
[i,i] = \bigl\{\xymatrix{
		    R(i) \ar@<.5ex>[r]^{\Id} & \ar@<.5ex>[l]^{z^{n+1}} R(i)	}\bigr\},
\quad
  [i,i]'= \bigl\{ \xymatrix{
		    R(i) \ar@<.5ex>[r]^{z^{n+1}} & \ar@<.5ex>[l]^{\Id} R(i)
	    }\bigr\},  
	     \quad i\in \ZZ/(n+1),    
	     \end{gathered} 
	\ee
	where the exponents are to be interpreted via the identification $\ZZ/(n+1) \cong
	\{0,1,\dots,n\}$. It is clear that
	\[
	\Sigma[i,j] = [j,i], \quad i\neq j, \quad \Sigma[i,i]=[i,i]'. 
	\]
	
	One verifies by computing Hom-complexes that
	\[
	[i,i] \simeq [i,i'] \simeq 0 \in H^0\T^n
	\]
	are zero objects in $H^0\T^n$. 
	
	\begin{prop}\label{prop:rank-1-factorizations}
	The  $[i,j]$ exhaust all isomorphism classes of 
	indecomposable objects in $H^0\T^n$
	\qed
	\end{prop}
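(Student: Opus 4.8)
The plan is to leverage the equivalence $H^0\T^n \simeq D^{(2)}(A_n\mod)$ from Theorem \ref{thm:A-n-factorizations}(a) and reduce the statement to a counting argument. Since $\T^n$ is a perfect $2$-periodic dg-category, $H^0\T^n$ is idempotent-complete, and because the singularity $z^{n+1}$ is isolated the cohomology of each $\Hom$-complex is finite-dimensional over $\k$; hence $H^0\T^n$ is Krull--Schmidt and classifying its indecomposables is meaningful. By the cited equivalence together with the description of the root category (Happel, see \cite{happel}), the indecomposable objects of $H^0\T^n$ are exactly the ${\bf e}_{ij}$, so there are precisely $n(n+1)$ isomorphism classes of them, one for each root of the $A_n$ root system.

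On the other hand there are exactly $n(n+1)$ ordered pairs $(i,j)$ with $i\neq j$ in $\ZZ/(n+1)$, hence exactly $n(n+1)$ rank-one objects $[i,j]$. So the proposition will follow once I establish that (i) each $[i,j]$ is indecomposable in $H^0\T^n$, and (ii) the $[i,j]$ are pairwise non-isomorphic: a family of $n(n+1)$ pairwise non-isomorphic indecomposables inside a category with exactly $n(n+1)$ indecomposable isomorphism classes must exhaust all of them.

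For (i) and (ii) I would compute the complexes $\Hom^\bullet([i,j],[i',j'])$ directly from \eqref{eq:pathen} and \eqref{eq:pathendg}, using that the degree-$0$ morphisms between the free rank-one modules $R(i), R(j)$ are the homogeneous elements of $\k[z]$ of the appropriate residue degree. Although these complexes are infinite-dimensional over $\k$ (the element $z^{n+1}=w$ sits in internal degree $0$ modulo $n+1$), passing to cohomology collapses them, since multiplication by $w$ is null-homotopic on any loop factorization; one finds $H^0\End^\bullet([i,j]) \cong \k$, spanned by the identity. A field of endomorphisms is in particular local, which gives indecomposability. The same computation of $H^0\Hom^\bullet([i,j],[i',j'])$ shows that for $(i,j)\neq(i',j')$ no degree-$0$ morphism is invertible in $H^0\T^n$, yielding (ii); alternatively, non-isomorphism can be read off from the classes in $K_0(H^0\T^n)$, which are the distinct roots $e_{ij}$, using the inscribed-triangle relations of Example \ref{ex:triangles-in-polygon}.

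The main obstacle is precisely this explicit cohomology computation for rank-one graded matrix factorizations of $z^{n+1}$ — the step deferred as ``similar to \cite{takahashi}''. Conceptually it is transparent: under Eisenbud's correspondence $H^0\T^n$ is the graded stable module category of the uniserial algebra $\k[z]/(z^{n+1})$, whose non-free graded indecomposables are the shifts of $\k[z]/(z^{a})$ for $1\le a\le n$, and $[i,j]$ is exactly the factorization presenting (a shift of) $\k[z]/(z^{j-i})$; this makes both the indecomposability and the bijection with the $[i,j]$ manifest. Turning this picture into the stated $\Hom$-complex calculation is routine bookkeeping with powers of $z$, but it is where all the genuine content lies.
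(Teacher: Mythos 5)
Your proof is correct in substance, but it is worth being precise about how it relates to the paper: the paper gives no argument at all here, deferring this and the surrounding statements to an adaptation of Takahashi \cite{takahashi}, whose route is the direct one --- classify all graded matrix factorizations of $z^{n+1}$ by a graded Smith-normal-form/uniserial-module argument, conclude that every object of $H^0\T^n$ splits into rank-one pieces, and then read off the indecomposables. Your main route is genuinely different: you take the equivalence of Theorem \ref{thm:A-n-factorizations}(a) as a black box, count the $n(n+1)$ indecomposables of the root category $D^{(2)}(A_n\mod)$, and then only need the two local facts that each $[i,j]$ has $H^0\End \cong \k$ and that the $[i,j]$ are pairwise non-isomorphic; pigeonhole does the rest. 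This buys you a much lighter computation (no structure theorem for arbitrary graded factorizations), at the cost of leaning on the unproved Theorem \ref{thm:A-n-factorizations}. Note also that your closing paragraph (Eisenbud correspondence with the graded stable module category of $\k[z]/(z^{n+1})$, whose non-free graded indecomposables are the shifts of $\k[z]/(z^a)$, $1\le a\le n$) is not really an aside: it \emph{is} the Takahashi-style direct proof, and it makes the counting route redundant.

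Two caveats. First, a potential circularity you should flag explicitly: within this paper, Theorem \ref{thm:A-n-factorizations} is only made concrete via the tilting functors $\Phi_n^\triangleleft$ built from $\R_n^\triangleleft$, and the generation statement needed there (Proposition \ref{prop:r-n-morita}) is itself deduced \emph{from} the present proposition. So your counting argument is legitimate only if Theorem \ref{thm:A-n-factorizations}(a) is established independently (as in \cite{takahashi}), not via the paper's later tilting construction. Second, your alternative non-isomorphism argument via $K_0$ is under-justified as stated: identifying the class of $[i,j]$ with the root $e_{ij}$ amounts to the correspondence \eqref{eq:[i,j]-e-ij}, which the paper only establishes after this proposition; the Hom-complex computation (or the graded module picture, where $(i,j)\mapsto(\text{length }j-i,\text{shift})$ is visibly injective) should be regarded as the actual proof of pairwise non-isomorphism.
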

	
	\begin{prop} \label{prop:cones} 
	Let $i,j,k\in\ZZ/(n+1)$ be distinct elements, in counterclockwise cyclic order. Then the degree 0 morphisms
	\[
	\begin{gathered}
	\alpha_{ijk} = {\begin{pmatrix} 1 & 0\\0&z^{k-j}\end{pmatrix}}\in\Hom^0([i,j], [i,k]), \quad
	\beta_{ijk} = {\begin{pmatrix} z^{j-i} & 0\\0&1\end{pmatrix}}\in \Hom^0([i,k], [j,k]), \\
	\gamma_{ijk} = {\begin{pmatrix} 0 & z^{i-k}\\1&0\end{pmatrix}}\in\Hom^0([j,k], [k,i]),
	\end{gathered}
	\]	 
	are  closed and induce a distinguished triangle in $H^0\T^n$:
	\[
	[i,j] \overset{\alpha_{ijk}}{\lra} [i,k]\overset{\beta_{ijk}}{\lra} [j,k]
	\overset{\gamma_{ijk}}{\lra} [k,i] =  \Sigma [i,k]. \qed
	\]
	\end{prop}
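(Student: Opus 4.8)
The plan is to prove the statement in two stages: first, that $\alpha_{ijk}$, $\beta_{ijk}$ and $\gamma_{ijk}$ are cocycles of the stated $\Zt$-degree in the complexes $\Hom^\bullet$ of \eqref{eq:pathen}, so that they represent morphisms in $H^0\T^n$; and second, that the resulting sequence is isomorphic, in $H^0\T^n$, to the mapping-cone triangle of $\alpha_{ijk}$, which is distinguished by the very definition of the triangulated structure on a category of matrix factorizations.

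For the first stage, recall from \eqref{eq:pathendg} that a degree-$0$ element $\bigl(\begin{smallmatrix}\ast&0\\0&\ast\end{smallmatrix}\bigr)$ of $\Hom^\bullet(F,F')$ is a cocycle exactly when it commutes with the odd structure differentials $\bigl(\begin{smallmatrix}0&\varphi\\\psi&0\end{smallmatrix}\bigr)$ of its source and target. Reading off the free modules and the maps $\varphi,\psi$ of $[i,j],[i,k],[j,k]$ from \eqref{eq:rank-1-fact}, the closedness of $\alpha_{ijk}$ and $\beta_{ijk}$ reduces to a pair of monomial identities of the form $z^{a}z^{b}=z^{a+b}$, read modulo the single relation $z^{n+1}=w$. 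This is precisely where the counterclockwise cyclic order of $(i,j,k)$ enters: after a cyclic rotation we may assume $0\le i<j<k\le n$, so that exponent sums such as $(j-i)+(k-j)=k-i$ stay in the range $[0,n]$ and produce no stray factor of $w$, whereas the complementary sums wrap around exactly once and reproduce the single $w=z^{n+1}$ demanded on the other side of the identity. The morphism $\gamma_{ijk}$ is off-diagonal and plays the role of the connecting datum; it is closed by the same arithmetic.

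For the second stage I would form the mapping cone $C=\cone(\alpha_{ijk})$, a rank-two object of $\T^n$ whose underlying $\Zt$-graded free module is that of $[i,k]\oplus\Sigma[i,j]$ and whose odd differential assembles the structure maps of $[i,j]$ and $[i,k]$ together with $\alpha_{ijk}$. Because the component of $\alpha_{ijk}$ on the free module $\k[z](i)$ common to $[i,j]$ and $[i,k]$ is the identity, it occurs as an invertible entry of the differential of $C$, cutting out a contractible rank-one summand isomorphic to $[i,i]\simeq 0$. Gaussian elimination of this acyclic summand contracts $C$ onto the rank-one factorization supported on $\k[z](j)$ and $\k[z](k)$, whose surviving pair of structure maps is $(z^{k-j},z^{j-k})$, that is, exactly $[j,k]$ of \eqref{eq:rank-1-fact}. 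Transporting the canonical morphisms of the cone triangle, the inclusion $[i,k]\to C$ and the connecting map out of $C$, through this homotopy equivalence $C\simeq[j,k]$ then identifies them, up to sign, with $\beta_{ijk}$ and $\gamma_{ijk}$. Thus the displayed sequence is isomorphic in $H^0\T^n$ to the mapping-cone triangle of $\alpha_{ijk}$ and is therefore distinguished.

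The work here is entirely bookkeeping rather than conceptual, and I expect the matching of the connecting morphism to be the only delicate point: one must keep the internal $\ZZ/(n+1)$-grading shifts $\k[z](\cdot)$ and the signs in the cone differential under sufficient control to see not merely that $C$ is abstractly isomorphic to $[j,k]$, but that the two induced maps are the specific matrices $\beta_{ijk}$ and $\gamma_{ijk}$ recorded in the statement, landing in the displayed shifted object. As an independent check, one may transport everything through the equivalence $H^0\T^n\simeq D^{(2)}(A_n\mod)$ of Theorem~\ref{thm:A-n-factorizations}: under the identification of indecomposables from Proposition~\ref{prop:rank-1-factorizations}, the objects $[i,j],[i,k],[j,k]$ become the three arcs of an inscribed triangle and the sequence becomes the distinguished triangle of Example~\ref{ex:triangles-in-polygon}. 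Verifying that the explicit maps $\alpha_{ijk},\beta_{ijk},\gamma_{ijk}$ correspond to the arc morphisms there, however, still reduces to the same monomial computation.
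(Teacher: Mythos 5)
There is no argument in the paper to compare yours against: Proposition \ref{prop:cones} is one of the ``followup statements'' whose proofs the authors explicitly omit as adaptations of \cite{takahashi}. Your proof is therefore a genuine filling-in of that omission, and it is correct; I verified both computations it rests on. For closedness, the commutation conditions reduce to the identities $z^{j-i}z^{k-j}=z^{k-i}$, $z^{k-j}z^{i-k}=z^{i-j}$, $z^{i-k}z^{j-i}=z^{j-k}$ (all exponents being representatives in $\{0,\dots,n\}$), which hold exactly because counterclockwise arcs concatenate without wrapping; note that this direct arithmetic is actually preferable to your normalization ``we may assume $0\le i<j<k\le n$'', which is only legitimate if you invoke the grading-shift equivalence $\Pi_n$ of \S\ref{subsec:graded-factorizations} (it sends $[i,j]\mapsto[i+1,j+1]$ and fixes the matrices, since they depend only on differences). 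For the cone, the invertible entry contributed by the identity component of $\alpha_{ijk}$ does split $\cone(\alpha_{ijk})$ as the direct sum of $[j,k]$ and a trivial factorization --- which in the notation of \eqref{eq:rank-1-fact} is $[i,i]'$ rather than $[i,i]$, though both are zero objects --- the inclusion of $[i,k]$ projects to exactly $\beta_{ijk}$ with no sign correction, and the connecting map restricts to exactly $\gamma_{ijk}$ once the identification of the third vertex of the cone triangle with $[j,i]$ is chosen with the right sign (the two choices differ by the automorphism $-1$, and the paper fixes shifts only up to such a choice, so your ``up to sign'' is precisely the freedom available). Compared with the route the paper gestures at, your argument has the merit of being internal to $\T^n$: it nowhere uses the equivalence of Theorem \ref{thm:A-n-factorizations}, which the paper in fact normalizes by means of this very proposition, so your proof also avoids any appearance of circularity; your closing ``independent check'' via that equivalence is, as you say, not independent.

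One thing you missed, which your own computation exposes: the cone triangle of $\alpha_{ijk}$ ends in $\Sigma[i,j]=[j,i]$, not in $[k,i]=\Sigma[i,k]$ as displayed in the statement. Indeed, the matrix $\gamma_{ijk}$ only typechecks as a closed morphism with target $[j,i]$ (its nonzero entries are $1\colon R(j)\to R(j)$ and $z^{i-k}\colon R(k)\to R(i)$), and this is what matches the triangle ${\bf e}_{ij}\to{\bf e}_{ik}\to{\bf e}_{jk}\to\Sigma\,{\bf e}_{ij}$ of Example \ref{ex:triangles-in-polygon}. So the displayed ``$[k,i]=\Sigma[i,k]$'' is a typo for ``$[j,i]=\Sigma[i,j]$'', and your phrase ``landing in the displayed shifted object'' is the one claim your argument does not --- and should not --- establish.
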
	 
	The equivalence of Theorem \ref{thm:A-n-factorizations} can be chosen
	so that 
	\begin{equation}\label{eq:[i,j]-e-ij}
	{\bf e}_{ij} \mapsto [i,j], \quad i\neq j, 
	\ee	 
	and the triangles of Example \ref{ex:triangles-in-polygon}
	correspond to those of Proposition \ref{prop:cones}.
	For this, note that each of the following two diagrams
	\begin{equation}\label{eq:R-n-triangles} 
	\begin{gathered}
	\R_n^\triangleleft \,=\,\bigl\{ [0,1] \buildrel \alpha_{012}\over\lra [0,2]
	\buildrel \alpha_{023}\over\lra \cdots \buildrel\alpha_{0, n-1, n}\over\lra [0,n]\bigr\},\\
	\R_n^\triangleright \,=\,\bigl\{ [0,n] \buildrel \beta_{01n}\over\lra [1,n]
	\buildrel \beta_{12n}\over\lra \cdots \buildrel\beta_{n-2, n-1, n}\over\lra [n-1,n]\bigr\},
\end{gathered}
	\ee
	can be considered as a representation of $A_n$ in $\T^n$. The dg-functor
	\[
	\Phi_n^\triangleleft: C^{(2)}(A_n\mod) \lra \T^n, \quad V^\bullet\mapsto \on{RHom}_{A_n} (\R_n^\triangleleft, V^\bullet)
	\]
	establishes an equivalence satisfying \eqref{eq:[i,j]-e-ij}. A different equivalence $\Phi_n^\triangleright$
	 can be constructed using $\R_n^\triangleright$.

	\subsection{The dg-categories $\E^n$ and their cocyclic structure}
	
	We denote by $\E^n=\LF(\Q^n)$ the dg-category of loop factorizations of the $\L$-category
	$\Q^n$ from Example \ref{exa:circular}. 
		 Because the construction $\C\mapsto\LF(\C)$ is covariantly functorial in the $\L$-category $\C$,  we obtain
		 immediately: 

	  \begin{prop}\label{prop:E-n-cocyclic}
	 The collection $\E^\bullet = (\E^n)_{n\geq 0}$, forms a cocyclic object in $\dgcat^{(2)}$. 
	  \qed
	 \end{prop}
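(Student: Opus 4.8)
The plan is to realize the cocyclic structure on $\E^\bullet$ as a composite of two functors that are already available, so that no fresh verification of cocyclic identities is needed. Recall that a cocyclic object in $\dgcat^{(2)}$ is by definition a covariant functor $\Lambda \to \dgcat^{(2)}$ out of Connes' cyclic category, so it suffices to produce such a functor whose value on $\cn$ is $\E^n = \LF(\Q^n)$.

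First I would invoke Proposition~\ref{prop:quiver}, which supplies the (fully faithful, covariant) functor $\FC: \Lambda \to \LCat$ with $\FC(\cn) = \Q^n$. This is the step that encodes all of the cyclic morphisms of $\Lambda$ as honest $\L$-functors between the circular-quiver categories $\Q^n$; in particular, the generator of $\ZZ/(n+1) \subset \Aut_\Lambda(\cn)$ is sent to the degree-one rotation of $\Q^n$. I would then invoke Theorem~\ref{prop:lf}, which asserts that $\LF: \LCat \to \dgcat^{(2)}$ is a covariant functor, its functoriality being forced by the intrinsically natural formulas \eqref{eq:pathen} and \eqref{eq:pathendg} that define the $\Hom$-complexes of a loop-factorization category. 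The desired cocyclic object is then simply the composite
\[
\E^\bullet \;:=\; \LF \circ \FC : \; \Lambda \xrightarrow{\ \FC\ } \LCat \xrightarrow{\ \LF\ } \dgcat^{(2)},
\]
which sends $\cn \mapsto \Q^n \mapsto \LF(\Q^n) = \E^n$ and therefore recovers the prescribed objects together with a canonical action of $\ZZ/(n+1)$ on $\E^n$ by dg-automorphisms.

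The only point that could be mistaken for an obstacle is the coherence of the cyclic relations, but this has in effect already been discharged: once $\FC$ is known to be a genuine (and not merely lax) functor into $\LCat$, and $\LF$ is known to be strictly functorial, the composite automatically satisfies the defining relations of $\Lambda$ on the nose. I therefore expect the proof to consist of precisely this observation, with the entire substance residing in Proposition~\ref{prop:quiver} and Theorem~\ref{prop:lf}; no computation beyond citing them is required.
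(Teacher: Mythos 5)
Your proposal is correct and is exactly the paper's argument: the paper defines $\E^n = \LF(\Q^n)$ and obtains the cocyclic structure immediately by composing the embedding $\FC: \Lambda \to \LCat$ of Proposition \ref{prop:quiver} with the functor $\LF: \LCat \to \dgcat^{(2)}$ of Theorem \ref{prop:lf}, just as you do. Nothing further is needed.
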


	 For $i,j\in \ZZ/(n+1)$, $i\neq j$, let
	 $\varphi_{ij}$ be the shortest oriented path from $i$ to $j$ in the circular quiver $Q^n$.
	 Recall that $w_i$ denotes the full circle path beginning and ending at $i$.
	The objects of $\E^n$ are exhausted by the following:
	 \begin{equation}
		 \begin{gathered}
			 E_{ij} = \bigl\{	\xymatrix{
				 i \ar@/_1ex/[r]_{\varphi_{ij}} & j\ar@/_1ex/[l]_{\varphi_{ji}}\\
			 }\bigr\}, \quad i\neq j, \\
			 E_{ii} = \bigl\{	\xymatrix{
				 i \ar@/_1ex/[r]_{w_i} & i\ar@/_1ex/[l]_{\Id}\\
			 }\bigr\},
			 \quad 
			 E'_{ii} = \bigl\{	\xymatrix{
				 i \ar@/_1ex/[r]_{\Id} & i\ar@/_1ex/[l]_{w_i}\\
			 }\bigr\},  \quad i\in \ZZ/(n+1). 
		 \end{gathered} 
	 \end{equation}
	  	 
	 \begin{prop}\label{prop:E-T-fully}
	 The correspondence
	 \[
	 E_{ij}\mapsto [i,j], \quad E'_{ii}\mapsto [i,i]'
	 \]
	 extends to a fully faithful dg-functor $\epsilon_n: \E^n\to\T^n$. 
	 \end{prop}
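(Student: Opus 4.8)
The plan is to exhibit $\epsilon_n$ as the dg-functor induced by a single $\L$-functor on the underlying $\L$-categories, and then to verify full faithfulness by a direct, factor-by-factor comparison of $\Hom$-complexes. Throughout write $R=\k[z]$ and $L=\ZZ/(n+1)$, and let $d_{ij}\in\{1,\dots,n\}$ (for $i\neq j$) be the length of the shortest path $\varphi_{ij}$ from $i$ to $j$ in $Q^n$, so that $d_{ij}\equiv j-i\pmod{n+1}$ and $d_{ij}+d_{ji}=n+1$.

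First I would construct an $\L$-functor $G\colon\Q^n\to\Perf^L_R$ sending the vertex $i$ to the shifted free module $R(i)$ and the generating arrow $i\to i+1$ of $Q^n$ to multiplication by $z$, viewed as a degree-$0$ morphism $R(i)\to R(i+1)$ (this is degree-preserving precisely because $\deg(z)=1$ matches the grading shift). To see that $G$ respects the $\L$-structures, note that $w_i\in\Hom_{\Q^n}(i,i)$ is the composite of the $n+1$ generating arrows around the circle, whence $G(w_i)=z^{n+1}$, which is exactly multiplication by the central element defining the $\L$-category structure on $\Perf^L_R$ (Example~\ref{ex:MF}). Hence $G$ carries a loop factorization $F\colon\Q^1\to\Q^n$ to a loop factorization $G\circ F$ in $\Perf^L_R$, i.e.\ to an object of $\T^n=\MF^L(R,z^{n+1})$; tracking shortest paths gives $\varphi_{ij}\mapsto z^{d_{ij}}=z^{j-i}$ and $\varphi_{ji}\mapsto z^{i-j}$, so that $E_{ij}\mapsto[i,j]$ as in \eqref{eq:rank-1-fact}, while the two degenerate objects $E_{ii},E'_{ii}$ map to the two diagonal factorizations $[i,i]$ and $[i,i]'$, realizing the stated correspondence.

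To promote $G$ to the dg-functor $\epsilon_n$, I would extend $G$ $\k$-linearly to $\k[\Q^n]$ and apply it entrywise to the matrices representing elements of the $\Hom$-complexes \eqref{eq:pathen}. Since the $\Hom$-complexes and the differentials \eqref{eq:pathendg} of $\E^n=\LF(\Q^n)$ and of $\T^n=\MF^L(R,z^{n+1})$ are given by identical matrix formulas in the structure maps $\varphi,\psi$, and since $G$ is a functor sending the structure maps of $F$ to those of $G\circ F$, this assignment automatically commutes with composition and with the differentials. Concretely, $\epsilon_n$ is the composite of $\LF(G)\colon\LF(\Q^n)\to\LF(\Perf^L_R)$ (Theorem~\ref{prop:lf}) with the evident dg-functor $\LF(\Perf^L_R)\to\T^n$ induced by the $\k$-linearization map $\k[\Hom_{\Perf^L_R}(-,-)]\to\Hom_{\Perf^L_R}(-,-)$; this produces a well-defined dg-functor acting on objects as above.

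The substance of the statement is full faithfulness, which I would reduce to a vertexwise assertion. Both $\Hom^0$ and $\Hom^1$ in \eqref{eq:pathen} are direct sums of two factors, each of the form $\Hom_{\k[\Q^n]}(i,j)$ on the source and $\Hom_{\Perf^L_R}(R(i),R(j))$ on the target, and $\epsilon_n$ respects this decomposition, so it suffices to show each induced map between such factors is an isomorphism. On the source, $\Hom_{\k[\Q^n]}(i,j)$ has $\k$-basis the paths $\{w_j^{\,k}\varphi_{ij}:k\ge 0\}$, one of each admissible length $d_{ij}+k(n+1)$ in the directed cycle $Q^n$. On the target, a degree-$0$ map $R(i)\to R(j)$ is multiplication by a homogeneous element of $R_{j-i}$, so $\Hom_{\Perf^L_R}(R(i),R(j))$ has $\k$-basis $\{z^{d_{ij}+k(n+1)}:k\ge 0\}$. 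Since $G$ sends $w_j^{\,k}\varphi_{ij}$ to $z^{d_{ij}}(z^{n+1})^k=z^{d_{ij}+k(n+1)}$, it matches the two bases bijectively; equivalently, each factor is a free rank-one module over $\k[w]$ (with $w=z^{n+1}$) on the generator $\varphi_{ij}$, resp.\ $z^{d_{ij}}$, and $G$ is an isomorphism of these. Hence every component map is an isomorphism of $\k$-vector spaces and $\epsilon_n$ is fully faithful. The only point requiring care is the bookkeeping of the $\ZZ/(n+1)$-grading, i.e.\ the identification of paths in $Q^n$ with the homogeneous elements of $R$ of matching degree; once this is in place the claim is immediate.
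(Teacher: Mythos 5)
Your proof is correct and is essentially the paper's own argument written out in full: the paper's proof consists of the single sentence that the identifications of the $\Hom$-complexes follow at once from the definitions, and your factor-by-factor matching of bases (paths $w_j^{\,k}\varphi_{ij}$ of length $d_{ij}+k(n+1)$ in $\Q^n$ against monomials $z^{d_{ij}+k(n+1)}$ in the graded pieces of $\k[z]$) is precisely that verification, packaged via the $\L$-functor $G$ and the functoriality of $\LF$. One bookkeeping caveat: with the paper's conventions ($E_{ii}$ has $\varphi=w_i$, $\psi=\Id$, whereas $[i,i]$ has $\varphi=\Id$, $\psi=z^{n+1}$), your functor actually sends $E_{ii}\mapsto [i,i]'$ and $E'_{ii}\mapsto [i,i]$, i.e.\ with the primes exchanged relative to the statement of the proposition --- a discrepancy already present in the paper's own labeling of these two objects, and harmless for everything that follows, since both are zero objects in $H^0\T^n$ and only full faithfulness and the images of the $E_{ij}$, $i\neq j$, are ever used.
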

	 
	 \begin{proof} The definitions imply at once the identifications
	 of the $\Hom$-complexes. 
	 \end{proof}

	 Let $D^{(2)}_{\on{ind}}(A_n\mod)$ be the full subcategory in $D^{(2)}(A_n\mod)$ formed by all indecomposable
	 objects, including the zero object. The above proposition, together with Theorem \ref{thm:A-n-factorizations}(a),
	 implies an equivalence of categories
	 \begin{equation}
		 H^0(\E^n) \simeq D^{(2)}_{\on{ind}}(A_n\mod). 
	 \end{equation}
	 	 
	Let $\A^n$ be the $\k$-linear envelope of the quiver $A_n$, considered as a differential
 $\ZZ/2$-graded category concentrated in even degree with zero differential. 
  The diagrams \eqref{eq:R-n-triangles} can be considered as dg-functors
\begin{equation}
	r_n^\triangleleft, r_n^\triangleright: \A^n\lra\E^n. 
\end{equation}

 \begin{prop} \label{prop:r-n-morita}
 The dg functors $r_n^\triangleleft, r_n^\triangleright, \epsilon_n$ are Morita equivalences.
\end{prop}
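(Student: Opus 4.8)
The plan is to use the standard criterion that a dg-functor $f\colon\Ac\to\Bc$ is a Morita equivalence as soon as (i) it is quasi-fully faithful and (ii) the objects $f(x)$, $x\in\Ob(\Ac)$, generate $\Perf_\Bc$ as a thick triangulated subcategory. Indeed, (i) guarantees that $f_!$ is fully faithful on representable modules and hence, by d\'evissage, on all of $\Perf_\Ac$, while (ii) guarantees that $f_!$ meets a generating set of compact objects; together these make $f_!\colon\Perf_\Ac\to\Perf_\Bc$ a dg-equivalence, which is exactly the defining condition of a Morita equivalence.

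For $\epsilon_n$ both hypotheses are essentially free. It is fully faithful by Proposition~\ref{prop:E-T-fully}, and its image already contains every $[i,j]$ with $i\ne j$. By Proposition~\ref{prop:rank-1-factorizations} these are all the indecomposable objects of $H^0\T^n\simeq D^{(2)}(A_n\mod)$ (Theorem~\ref{thm:A-n-factorizations}(a)); since the root category is Krull--Schmidt, every object is a finite direct sum of them, so they generate. Hence $\epsilon_n$ is a Morita equivalence.

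For $r_n^\triangleleft$ I would argue through the composite $\epsilon_n\circ r_n^\triangleleft\colon\A^n\to\T^n$. Under the identification $H^0\T^n\simeq D^{(2)}(A_n\mod)$ with ${\bf e}_{ij}\mapsto[i,j]$, this composite sends the vertex $k$ of $A_n$ to $[0,k]={\bf e}_{0k}=\k_{[n-k+1,n]}$ and the arrows to the maps $\alpha_{0,k,k+1}$ of Proposition~\ref{prop:cones}, i.e.\ its image is the diagram $\R_n^\triangleleft$. The objects $\k_{[n-k+1,n]}$ ($k=1,\dots,n$) are exactly the indecomposable projective $A_n$-modules. Because $A_n$ is hereditary, these carry no higher self-extensions, so $R\Hom^\bullet([0,k],[0,l])$ — which we may compute in $\T^n$ since $\epsilon_n$ is fully faithful — is concentrated in even degree and equals $\Hom_{A_n}(\k_{[n-k+1,n]},\k_{[n-l+1,n]})$, namely $\k$ for $k\le l$ and $0$ otherwise. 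This matches $\Hom_{\A^n}(k,l)$ exactly, so $\epsilon_n\circ r_n^\triangleleft$ is quasi-fully faithful; equivalently, it is the equivalence $\Phi_n^\triangleleft$ precomposed with the inclusion of the projective generator into $C^{(2)}(A_n\mod)$. Since the projectives generate $D^{(2)}(A_n\mod)$ as a thick subcategory (every module has a projective resolution of length $\le 1$ and is therefore a cone of projectives), the composite is a Morita equivalence.

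Finally, Morita equivalences are the weak equivalences of the Morita model structure, so they satisfy the two-out-of-three property; as $\epsilon_n$ and $\epsilon_n\circ r_n^\triangleleft$ are Morita equivalences, so is $r_n^\triangleleft$. The argument for $r_n^\triangleright$ is identical, with $\R_n^\triangleleft$ replaced by $\R_n^\triangleright$ and the projectives replaced by the indecomposable injectives ${\bf e}_{kn}=\k_{[1,n-k]}$, which likewise have vanishing higher extensions, generate, and give rise to $\Phi_n^\triangleright$. The only genuine computation is the matching of $\Hom$-complexes for the composite, and this is precisely where the hereditariness of $A_n$ — equivalently, the fact that the objects appearing in $\R_n^\triangleleft$ (resp.\ $\R_n^\triangleright$) form a projective (resp.\ injective) generator of $A_n\mod$ — enters.
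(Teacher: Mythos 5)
Your proof is correct, and its skeleton coincides with the paper's: $\epsilon_n$ is treated as a full embedding whose image contains all indecomposables of $H^0\T^n$ (Proposition \ref{prop:rank-1-factorizations}), and $r_n^\triangleleft$, $r_n^\triangleright$ are treated by proving that the composite with $\epsilon_n$ is a Morita equivalence and then applying two-out-of-three. Where you genuinely diverge is in the key step for the composite. The paper never leaves the matrix-factorization side: it notes that $r_n^\triangleleft$ is quasi-fully faithful (a direct computation with the Hom-complexes \eqref{eq:pathen}) and obtains generation purely from Proposition \ref{prop:cones} --- taking $i=0$ there exhibits each $[j,k]$ as the cone of $\alpha_{0jk}\colon [0,j]\to[0,k]$, so the image of $\epsilon_n\circ r_n^\triangleleft$ generates $H^0\T^n$ under iterated cones, shifts and sums. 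You instead transport the whole question across the equivalence $H^0\T^n\simeq D^{(2)}(A_n\mod)$ of Theorem \ref{thm:A-n-factorizations}, normalized as in \eqref{eq:[i,j]-e-ij}, identify the objects $[0,k]$ with the indecomposable projectives (resp.\ the $[k,n]$ with the injectives), and use hereditariness of $A_n$ both for the Hom-complex computation and for generation via length-one projective resolutions. Both routes work, but note the trade-off: the paper's argument is independent of Theorem \ref{thm:A-n-factorizations} and of the claim that $\Phi_n^\triangleleft$ is an equivalence --- statements whose proofs the paper omits and which Proposition \ref{prop:r-n-morita} essentially re-proves --- so it avoids any appearance of circularity, whereas your argument gains representation-theoretic transparency at the cost of resting on exactly those omitted identifications. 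In particular, quasi-fully-faithfulness requires the \emph{induced map} on Hom-complexes to be a quasi-isomorphism, not merely an abstract agreement of cohomologies; your way of securing this is precisely the identification of the composite with $\Phi_n^\triangleleft$ restricted to the projective generator, which is where that dependence enters.
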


\begin{proof} For $\epsilon_n$, which is an embedding of a full dg-category, the statement follows from
Proposition \ref{prop:rank-1-factorizations}: each object of $H^0 \T^n$ is isomorphic to a direct sum of
objects from $\on{Im}(\epsilon_n)$. To prove the statement for $r_n^\triangleleft$,  we note that is
is quasi-fully faithful: induces quasi-isomorphisms on Hom-complexes. By the above, it is enough to
prove that $\epsilon_n\circ r_n^{\triangleleft}$ is a Morita equivalence. This follows because, by
  Proposition \ref{prop:cones}, each object of $H^0\epsilon_n(\E^n)$ and therefore, by the above,
  each object of $H^0\T^n$,
  is obtained from objects in $\on{Im}(\epsilon_n\circ r_n^\triangleleft)$ by taking iterated cones of
  morphisms.
  The case of $r_n^\triangleright$ is similar. 
\end{proof}

\subsubsection{$\E^2$ and distinguished triangles} 

In Section \ref{sec:sconstruction} below, we provide a thorough analysis of the universal property of
the cyclic object $\E^{\bullet}$ in terms of Waldhausen's S-construction. Here, we use a more
explicit approach identifying the dg category $\E^2$ as a universal distinguished triangle category.

Let $\T$ be a triangulated category equipped with a $2$-periodic dg enhancement. Due to the
$2$-periodicity, a triangle
\[
	\xymatrix{
		A \ar[rr] & & B \ar[dl] \\
		& C \ar[ul]^{+1}&
	}
\]
in $\T$ can be depicted more symmetrically via the diagram
\[
	\xymatrix{
		A \ar[rr]^{+1} & & \Sigma B \ar[dl]^{+1} \\
		& C \ar[ul]^{+1}.&
	}
\]
We will refer to the latter diagram as a {\em symmetric triangle in $\T$}. A symmetric triangle in
$\T$ is called {\em distinguished} if its corresponding asymmetric triangle is distinguished.
We denote by $\F$ the $2$-periodic dg category with three objects $0$,$1$, and $2$, freely generated
by closed morphisms $p_{10}: 0 \to 1$, $p_{21}: 1 \to 2$, and $p_{02}: 2 \to 0$ of degree $1$. Given
$2$-periodic dg categories $\A$, $\B$, we denote by $[\A,\B]$ the set of morphisms from $\A$ to $\B$
in the homotopy category $\Ho(\dgcatt)$. 

\begin{prop}\label{prop:universal1}
	Let $\T$ be a triangulated category with $2$-periodic dg enhancement $\A$. 
	Then the set $[\F, \A]$ of morphisms in $\Ho(\dgcatt)$ is in canonical bijection with the
	set of isomorphism classes of triangles in $\T$.
\end{prop}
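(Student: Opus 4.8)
\emph{Strategy.} Since $\F$ is freely generated by three closed morphisms and carries the zero differential on its generators, it is a semifree, hence cofibrant, object of $\dgcatt$. The plan is therefore to compute $[\F,\A]=\Hom_{\Ho(\dgcatt)}(\F,\A)$ as $\pi_0\Map(\F,\A)$ and to match it with isomorphism classes of triangles. The essential warning, already flagged in \S\ref{subsec:morita}, is that one must use the \emph{derived} internal Hom: by To\"en's description the mapping space is $\Map(\F,\A)\simeq N(\Mod^{\on{rqr}}_{\F^\op\otimes\A},\mathsf W)$, so that $[\F,\A]$ is the set of isomorphism classes of objects of $H^0(\IRHom(\F,\A))$, i.e.\ of right quasi-representable $\F^\op\otimes\A$-modules up to quasi-isomorphism.

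\emph{From modules to triangles.} Next I would unwind this description using the freeness of $\F$. A right quasi-representable module $M$ over $\F^\op\otimes\A$ assigns to each of the three objects $i\in\{0,1,2\}$ of $\F$ an object $B_i\in\A$ (well defined up to quasi-isomorphism by quasi-representability) and, to the three generating morphisms $p_{10},p_{21},p_{02}$, closed morphisms of degree $1$ between the corresponding $B_i$. Because $\F$ has \emph{no relations} among its generators, there is no further coherence data to record: the module is equivalent to the datum of three objects together with three closed degree-$1$ maps arranged cyclically. Under the $2$-periodic identification $H^1\Hom^\bullet_\A(X,Y)\cong\Hom_\T(X,\Sigma Y)$ this is exactly a triangle $\tau(M)$ in $\T=H^0(\A)$ in its symmetric presentation. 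Surjectivity of $M\mapsto\tau(M)$ is then immediate: given any triangle, choose closed representatives of its three maps and form the associated representable module, which realizes it.

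\emph{Injectivity.} It remains to show that two such modules are quasi-isomorphic precisely when the corresponding triangles are isomorphic in $\T$. A quasi-isomorphism (equivalently, an isomorphism in $H^0(\IRHom(\F,\A))$) is a family of closed degree-$0$ homotopy equivalences $h_i\colon B_i\to B_i'$ compatible with the three structure maps \emph{up to homotopy}, i.e.\ $f'_{ij}h_i-h_jf_{ij}=d(s_{ij})$ for suitable degree-$0$ morphisms $s_{ij}$ in $\A$. Passing to $H^0$, the $[h_i]$ become isomorphisms in $\T$ intertwining the three arrows, which is exactly an isomorphism of triangles; conversely any such isomorphism of triangles lifts to closed homotopy equivalences $h_i$ and homotopies $s_{ij}$ as above, and these assemble into a quasi-isomorphism of modules because—again by freeness—there are no higher coherences to match beyond the single identity $f'h-hf=ds$ on each generator. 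Hence $\tau$ descends to the asserted canonical bijection.

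\emph{Main obstacle.} The genuinely delicate point is the very first reduction: one cannot compute $[\F,\A]$ through the strict internal Hom $\IHom(\F,\A)$ or through a naive cylinder $\F\otimes\mathcal K$, since (as noted in \S\ref{subsec:morita}) $\IHom$ is not homotopy invariant on $\dgcat$ and the analogue fails $2$-periodically as well. The strict computation would force the squares $f'h=hf$ to commute \emph{on the nose} and thereby produce a finer, incorrect equivalence relation. The work therefore lies in invoking To\"en's model $N(\Mod^{\on{rqr}}_{\F^\op\otimes\A},\mathsf W)$ for the mapping space (or, equivalently, in constructing and verifying a correct path object), and in checking that, for the free category $\F$, quasi-representable modules and their quasi-isomorphisms collapse to plain triangles and isomorphisms of triangles. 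Everything else is bookkeeping, and the naturality of each step yields the canonicity of the bijection.
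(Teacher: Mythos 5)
Your proposal is correct in substance, but it takes a genuinely different route from the paper's. The paper's proof never invokes To\"en's bimodule description: it uses that $\F$ is cofibrant and that every object of $(\dgcatt,\Qeq)$ is fibrant, so that $[\F,\A]$ is the set of homotopy classes of strict dg functors $\F\to\A$, with homotopies computed via Tabuada's explicit path object $P(\A)$. Unwinding that path object, a homotopy between two functors is exactly a triple of closed degree-$0$ homotopy equivalences $h_i\colon B_i\to B_i'$ together with degree-$0$ elements $s_{ij}$ satisfying $f'_{ij}h_i-h_jf_{ij}=d(s_{ij})$, and the rest is an elementary verification. You instead pass through To\"en's theorem $\Map(\F,\A)\simeq N(\Mod^{\on{rqr}}_{\F^\op\otimes\A},\mathsf W)$ (quoted in \S \ref{subsec:morita}) together with the identification $[\F,\A]\cong\pi_0\Map(\F,\A)$ of Remark \ref{rem:classifying1}, and then unwind bimodules rather than functors. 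Both are legitimate; the paper's route is more elementary and self-contained, while yours stays inside the $\IRHom$ formalism used elsewhere in the paper, and — pleasingly — the data $(h_i,s_{ij})$ you extract in the injectivity step is literally the same data that a dg functor $\F\to P(\A)$ encodes, so the two proofs converge on the same computation.

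The one step you must tighten is the assertion that the $(h_i,s_{ij})$ ``assemble into a quasi-isomorphism of modules.'' A strict morphism between the representable bimodules $M_F$, $M_{F'}$ attached to two dg functors is, by Yoneda, just a family $h_i$ commuting with the structure maps \emph{on the nose}; there is no slot in such a morphism for the homotopies $s_{ij}$, so as literally stated this step fails. The repair is precisely your freeness remark, made concrete: since $\F$ is free on a graph, its diagonal bimodule has a two-term semifree resolution with generators indexed by the three objects and the three arrows; tensoring gives a semifree (cofibrant) replacement $\widetilde{M}_F\to M_F$ whose degree-$0$ generators can be sent via the $h_i$ and whose remaining generators via the $s_{ij}$, the compatibility with the differential being exactly the relation $f'_{ij}h_i-h_jf_{ij}=d(s_{ij})$. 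This yields an honest quasi-isomorphism $\widetilde{M}_F\to M_{F'}$, i.e.\ an isomorphism in $H^0(\IRHom(\F,\A))$, and the absence of generators in higher ``simplicial'' degrees is the precise meaning of ``no higher coherences.'' (The paper's path object performs this rectification automatically, which is what that approach buys.) With this insertion your argument is complete.
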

\begin{proof}
	The dg category $\F$ is obtained by iterated pushouts along generating cofibrations of
	$\dgcatt$ and hence cofibrant. By the usual model category formalism, we can therefore 
	compute the set $\Hom_{\Ho(\dgcatt)}(\F, \A)$ as homotopy classes of maps from $\F \to \A$.
	Here, two dg functors $f: \F \to \A$ and $g: \F \to \A$ are homotopic, if there exists a
	commutative diagram 
	\[
		\xymatrix{
			& \A\\
			\F \ar[r] \ar[ur]^{f} \ar[dr]_{g} & P(\A) \ar[u]_{p_1}\ar[d]^{p_2}\\
			& \A
		}
	\]
	in $\dgcatt$.
	Here, $P(\A)$ denotes a path object for $\dgcatt$ which can be explicitly constructed as
	follows (cf. \cite{tabuada}). The objects of $P(\A)$ are pairs $(x,y)$ of objects of $\A$ equipped with a closed
	morphism $f: x \to y$ of degree $0$ which becomes an isomorphism in $H^0(\A)$. The mapping
	complex between objects $(x,y,f)$ and $(x',y',f')$ is defined as the suspension of the cone
	of the map
	\[
		\Hom_{\A}(x,x') \oplus \Hom_{\A}(y,y') \overset{(f'_*, -f^*)}{\lra} \Hom_{\A}(x,y').
	\]
	Now consider the obvious map of sets
	\[
		\Hom_{\dgcatt}(\F, \A) \lra \left\{ \text{symmetric triangles in $\T = H^0(\A)$} \right\}
	\]
	which is easily seen to be surjective. Unravelling the definition of the path dg category, it is straightforward 
	to verify that two dg functors from $\F$ to $\A$ are homotopic if and only if the
	corresponding symmetric triangles are isomorphic as diagrams in $\T$.
\end{proof}

\begin{rem}\label{rem:classifying1} Given $2$-periodic dg categories $\A$, $\B$, the set of maps $[\A,\B]$ can be
	canonically identified with the set of connected components of the mapping space
	$\Map(\A,\B)$ of the simplicial localization of $\dgcatt$ along quasi-equivalences.
	Therefore, Proposition \ref{prop:universal1} provides an interpretation of 
	$\Map(\F,\A)$ as a classifying space for triangles in $\T$.
\end{rem}

By Proposition \ref{prop:cones}, the diagram in $H^0(\E^2)$ 
\begin{equation}\label{eq:trianglediag}
	\xymatrix@C=.5ex{
		E_{01} \ar[rr]^-{[f_1]} & & E_{20}
		\ar[dl]^-{[f_2]} \\
		& \ar[ul]^-{[f_3]} E_{12}
	}
\end{equation}
with 
\[
	f_1 = \begin{pmatrix} 0 & z\\1&0\end{pmatrix}\quad\quad 
	f_2 = \begin{pmatrix} 0 & z^2\\1&0\end{pmatrix}\quad\quad
	f_3 = \begin{pmatrix} 0 & z\\1&0\end{pmatrix}
\]
is a symmetric distinguished triangle. Analyzing the null homotopies in $\E^2$ of the pairwise
composites of the morphisms $f_i$, we can lift \eqref{eq:trianglediag} to a diagram in the dg category $\E^2$
\[
	\xymatrix@C=.5ex{
		E_{01} \ar[rr]^-{f_1} \ar@/_4ex/[dr]_{h_{21}}& & E_{20}\ar@/_4ex/[ll]_{h_{32}}
		\ar[dl]^-{f_2} \\
		& \ar[ul]^-{f_3} \ar@/_4ex/[ur]_{h_{13}}E_{12}
	}
\]
with 
\[
	h_{21} = \begin{pmatrix} 0 & 0\\1&0\end{pmatrix}\quad\quad
	h_{32} = \begin{pmatrix} 0 & 0\\1&0\end{pmatrix}\quad\quad
	h_{13} = \begin{pmatrix} 0 & 0\\1&0\end{pmatrix}
\]
satisfying
\begin{equation}\label{eq:rel1}
	d(h_{ij}) = f_i f_j.
\end{equation}
Further, we observe that there are relations in $\E^2$
\begin{equation}\label{eq:rel2}
	h_{32} f_1 + f_3 h_{21} = \id_{E_{01}}\quad\quad
	h_{13} f_2 + f_1 h_{32} = \id_{E_{20}}\quad\quad
	h_{21} f_3 + f_2 h_{13} = \id_{E_{12}}.
\end{equation}
Equivalently, we can reformulate our observation by saying that there exists a dg functor $f: \D \to \E^2$ 
where $\D$ is the $2$-periodic dg category with objects $0$, $1$, $2$, generated
by closed morphisms $f_1: 0 \to 1$, $f_2: 1 \to 2$, $f_3: 2 \to 0$ of degree $1$, and morphisms
$h_{21}:0 \to 2$, $h_{32}:1 \to 0$, $h_{13}:2 \to 1$ of degree $1$ satisfying \eqref{eq:rel1} and
\eqref{eq:rel2}.

\begin{prop}\label{prop:moritad}
	The functor $f:\D \to \E^2$ is a Morita equivalence.
\end{prop}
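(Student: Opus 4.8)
The plan is to verify the two conditions that were used to establish Proposition \ref{prop:r-n-morita}: that $f$ is quasi-fully faithful, and that its essential image generates $\Perf_{\E^2}$ as a thick triangulated subcategory. Recall that $f$ sends the objects $0,1,2$ of $\D$ to $E_{01}, E_{20}, E_{12}$, the closed degree-$1$ generators $f_1,f_2,f_3$ to the morphisms of the same name in \eqref{eq:trianglediag}, and the degree-$1$ generators $h_{21},h_{32},h_{13}$ to the null-homotopies exhibited in $\E^2$ just above; the relations \eqref{eq:rel1} and \eqref{eq:rel2} hold there by construction, so $f$ is well defined.

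The generation condition is immediate from the earlier results. By Proposition \ref{prop:rank-1-factorizations}, combined with the Morita equivalence $\epsilon_2$ of Proposition \ref{prop:r-n-morita}, the indecomposable objects of $H^0(\Perf_{\E^2}) \simeq H^0(\T^2)$ are precisely the $E_{ij}$ with $i \neq j$; since $\Sigma E_{ij} = E_{ji}$, the three objects $E_{01}, E_{20}, E_{12}$ together with their shifts $E_{10}, E_{21}, E_{02}$ exhaust all six indecomposables. As $\Perf_{\E^2}$ is closed under shifts, cones, and direct summands, the image of $f$ therefore generates it (alternatively one may quote the distinguished triangle of Proposition \ref{prop:cones} directly).

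The substance of the proof is quasi-full faithfulness, namely that for all $i,j \in \{0,1,2\}$ the map
\[
\Hom_{\D}^\bullet(i,j) \lra \Hom_{\E^2}^\bullet(f(i), f(j))
\]
is a quasi-isomorphism. On the target side the complexes are given explicitly by the loop-factorization formulas \eqref{eq:pathen}--\eqref{eq:pathendg}: the underlying graded spaces are infinite-dimensional, being assembled from the path category $\k[\Q^2]$ in which each $\Hom(i,i)=\k[w_i]$, but the differential built from the $\varphi_{ij}$ collapses the cohomology onto the finite-dimensional graded $\Hom$-spaces of the root category $D^{(2)}(A_2\mod)$. On the source side I would extract a spanning set for each $\Hom_{\D}^\bullet(i,j)$ directly from the presentation, using \eqref{eq:rel1} to rewrite the composites $f_i f_j$ as the boundaries $d(h_{ij})$ and \eqref{eq:rel2} to rewrite the three long cyclic words as identities, thereby reducing every word in the generators to a normal form. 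A degreewise comparison of cohomology then yields the isomorphisms.

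The main obstacle is exactly this last bookkeeping. A priori $\D$ is a path category with relations, so its morphism complexes could be large, and one must check that \eqref{eq:rel1} and \eqref{eq:rel2} are precisely strong enough both to kill every spurious cohomology class and to impose no relation absent from $\E^2$. Conceptually, the data $(h_{21},h_{32},h_{13})$ exhibits the cyclic twisted complex $(E_{01}\oplus E_{20}\oplus E_{12},\{f_i\})$ as contractible, which is the homotopy-theoretic content of the distinguished triangle of Proposition \ref{prop:cones}; once the resulting normal form is in hand, the term-by-term comparison with the explicitly computed complexes on the $\E^2$ side is a finite, routine verification.
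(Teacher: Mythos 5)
The paper itself gives no argument here (its proof reads ``Left to the reader''), so there is nothing on the paper's side to compare against; your proposal must stand on its own. Its skeleton is the right one and matches the pattern of Proposition \ref{prop:r-n-morita}: quasi-full faithfulness plus thick generation of $\Perf_{\E^2}$ implies Morita equivalence. The generation half is complete and correct: by Proposition \ref{prop:rank-1-factorizations} and the Morita equivalence $\epsilon_2$, the nonzero indecomposables of $H^0\T^2$ are the six $[i,j]$, $i\neq j$, and the image of $f$ together with its shifts hits all of them.

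The genuine gap is in the quasi-full-faithfulness step, which you rightly call the substance of the statement but then defer as ``routine.'' It is not. First, \eqref{eq:rel1} is not a relation in the underlying graded category of $\D$: it \emph{defines} the differential on the generators $h_{ij}$, so it cannot be used to ``rewrite'' words when extracting a basis; only the three relations \eqref{eq:rel2} constrain the underlying graded Hom-spaces. Second, \eqref{eq:rel2} equates \emph{sums} of length-two paths with identities, so producing a normal form requires an actual confluence argument (e.g.\ the diamond lemma applied to rewriting rules such as $f_3h_{21}\mapsto \id_0 - h_{32}f_1$; these happen to have no overlaps, but you neither choose leading terms nor check this), and the resulting basis of irreducible words is still infinite in every degree --- so the final comparison is not ``a finite, routine verification.'' Third, even with a basis in hand, the differential does not respect word length after reduction (for instance $d(h_{32}h_{13}h_{21})$ reduces to the length-four irreducible word $h_{32}h_{13}f_2f_1$), so computing $H^\bullet\Hom_\D(i,j)$ requires organizing a genuinely infinite computation, e.g.\ by a filtration, and then matching it against $H^\bullet\Hom_{\E^2}(f(i),f(j))$, which one gets from \eqref{eq:pathen}--\eqref{eq:pathendg} or from the root category via Proposition \ref{prop:E-T-fully}. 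Whether the relations \eqref{eq:rel2} kill exactly the spurious classes and impose no extra ones is precisely the content of the proposition; asserting it is not proving it. To close the gap you would need to carry out this cohomology computation, or replace it by an explicit homotopy retraction of each complex $\Hom_{\E^2}(f(i),f(j))$ onto its cohomology whose induced structure reproduces the presentation of $\D$.
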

\begin{proof} Left to the reader. 
\end{proof}
 
We will now give an interpretation of the set of morphisms $[\E^2, \A]$ in $\Ho(\dgcatt)$ in terms of
distinguished triangles in $H^0(\A)$. By Proposition \ref{prop:moritad}, this set is in natural
bijection with the set of morphisms $[\D, \A]$. To get a more explicit hold on this morphism set, we
may utilize the model structure on $\dgcatt$ and pass to a cofibrant replacement $p: \wD \to \D$.

The dg category $\wD$ can be explicitly described as follows. Denote by $\Q^2$ the cyclic quiver
with vertices $0$,$1$, and $2$, and arrows $0 \to 1$, $1 \to 2$, and $2 \to 0$. Let $\wD$ denote the $2$-periodic dg
category with objects $0$, $1$, and $2$, obtained by adjoining, for every path $\gamma$ in $\Q^2$ a
(not necessarily closed) morphism $p_{\gamma}: s(\gamma) \to t(\gamma)$ of degree $1$, with differential given by 
\begin{align*}
	d(p_{\gamma}) = 
	\begin{cases}
		\sum_{\beta \circ \alpha = \gamma} p_{\beta} p_{\alpha} - \id_i & \text{if $\gamma$
				is a degree $1$ cycle centered at $i$},\\
		0 & \text{if $\gamma$ has length $1$},\\
		\sum_{\beta \circ \alpha = \gamma} p_{\beta} p_{\alpha} & \text{for all other cases.}
	\end{cases}
\end{align*}
Since the category $\F$ is freely generated by paths in $\Q^2$ of length $1$, we have a natural embedding 
$i: \F \to \wD$. The functor $i$ is a cofibration, since it can be obtained by iterated pushouts 
along generating cofibrations. In particular, the dg category $\wD$ is cofibrant. The functor $p:
\wD \to \D$ is simply obtained by sending all morphisms corresponding to paths of length $\ge 3$ to $0$.

\begin{prop}\label{prop:cofibrantrep}
	The functor $p:\wD \to \D$ is a quasi-equivalence.
\end{prop}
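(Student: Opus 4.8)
The plan is to verify that $p$ is a quasi-equivalence directly on morphism complexes. Since $p$ is the identity on the object sets $\{0,1,2\}$, essential surjectivity on $H^0$ is automatic, and it suffices to show that each chain map $p_{a,b}\colon \Hom^\bullet_{\wD}(a,b)\to\Hom^\bullet_{\D}(a,b)$ is a quasi-isomorphism of $\Zt$-graded complexes. First I would record that $p_{a,b}$ is surjective in each degree: its image contains all the generators $f_i$ and $h_{ij}$ of $\D$ (the images of the length-$1$ and length-$2$ path generators), and these generate $\D$. The morphism complexes of $\wD$ carry the grading by total path length $N(p_{\gamma_k}\cdots p_{\gamma_1})=\sum_i\ell(\gamma_i)$, and I would use the increasing filtration $F_N$ that this length induces.

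On the associated graded the differential of $\wD$ reduces to its deconcatenation part $d_0(p_\gamma)=\sum_{\gamma=\beta\alpha}p_\beta p_\alpha$, because the remaining summands $-\id_i$ occurring in the length-$3$ cycles strictly lower $N$ (by $3$) and hence do not contribute to $d_0$. The complex $(\wD,d_0)$ is precisely the cobar construction of the deconcatenation coalgebra $C=\bigoplus_{\ell\ge 1}\k Q^2_\ell$ of positive-length paths in the cyclic quiver $Q^2$; since every vertex of $Q^2$ has a unique outgoing arrow, $C$ is the conilpotent cofree coalgebra over $\k^3$ cogenerated by the three arrows. The key computation is that the cobar homology of such a cofree coalgebra is the square-zero extension: I would exhibit the standard contracting homotopy which splits the first arrow off a word, showing that $H^\bullet(\wD,d_0)$ is spanned by the identities $\id_0,\id_1,\id_2$ in degree $0$ and by the three arrow classes $[p_{\alpha_i}]$ in degree $1$, with every product of two arrow classes exact. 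In particular $E_1$ is concentrated in total path lengths $0$ and $1$.

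It then remains to run out the spectral sequence and to match the outcome with $\D$. Because the only differential beyond $d_0$ lowers $N$ by exactly $3$, whereas $E_1$ lives only in lengths $0$ and $1$, there is no room for a nonzero higher differential, so the spectral sequence degenerates at $E_1$ and $H^\bullet(\wD)$ is the square-zero extension just described. On the other side, $H^\bullet(\D)$ is computed from Proposition~\ref{prop:moritad} and Theorem~\ref{thm:A-n-factorizations}: the three objects correspond to the indecomposables $E_{01},E_{20},E_{12}$ of $H^0(\E^2)\simeq D^{(2)}_{\on{ind}}(A_2\mod)$, whose only morphisms are the identities in degree $0$ and the three degree-$1$ maps of the distinguished triangle of Proposition~\ref{prop:cones}, all of whose composites vanish; this is again the square-zero extension. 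Since $p$ carries each $\id_i$ to $\id_i$ and the arrow generators $p_{\alpha_i}$ to $f_1,f_2,f_3$, the induced map $p_*$ sends basis to basis and is an isomorphism, completing the proof. The main obstacle is the middle step: identifying $(\wD,d_0)$ with a cofree cobar complex and producing the contracting homotopy with the correct Koszul signs, and then checking that the length-$3$ correction terms genuinely leave no surviving higher differential.
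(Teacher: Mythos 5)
The paper offers nothing to compare against here: its proof of this proposition is literally ``Left to the reader'', so your argument has to be judged on its own terms, and its core is correct. Filtering each morphism complex of $\wD$ by total path length is the right move: the differential has components of weight $0$ (deconcatenation) and $-3$ (the $-\id_i$ terms), so the filtration is preserved and the associated graded is the cobar construction of the path coalgebra of $Q^2$. That coalgebra is the cofree conilpotent cotensor coalgebra over $\k^3$ on the arrow bimodule $V$ for \emph{any} quiver, so your ``unique outgoing arrow'' justification is unnecessary (though harmless); its cobar homology being $\k^3\oplus\Sigma^{-1}V$ is exactly the bar--cobar counit quasi-isomorphism applied to the square-zero algebra $\k^3\oplus\Sigma^{-1}V$. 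Your degeneration argument is also right: $d_1$ and $d_2$ vanish because $d$ has no components of weight $-1$ or $-2$, the higher differentials vanish because $E_1$ sits in weights $0,1$, and the filtration is bounded below and exhaustive, so the spectral sequence converges and $H^\bullet(\wD)$ is spanned by $\id_0,\id_1,\id_2$ and the three arrow classes. Finally, using Proposition \ref{prop:moritad} to compute $H^\bullet(\D)$ is legitimate and not circular: a Morita equivalence is quasi-fully faithful, since $f_!$ is an equivalence on $\Perf$ and is compatible with the Yoneda embeddings, and the $A_2$ computation you quote (identities, the three triangle maps, all composites zero) is correct.

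Two remarks, one substantive. Cosmetically, since the proof of Proposition \ref{prop:moritad} is itself left to the reader, you could make the whole argument self-contained by computing $H^\bullet(\D)$ with the same rewriting technique: the normal forms are $f^a$ and $f^a h$, with $d(f^a h)=\pm f^{a+2}$, so the homology is spanned by the identities and the $[f_i]$. Substantively, your plan silently requires the relations $h_{13}h_{21}=h_{21}h_{32}=h_{32}h_{13}=0$ to hold in $\D$: they are forced by the requirement that $p$ be a dg functor at all, because for a length-$4$ path $\gamma$ the $(2,2)$-decomposition term of $d(p_\gamma)$ maps to a product of two $h$'s while $p(p_\gamma)=0$. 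These relations do \emph{not} follow from \eqref{eq:rel1} and \eqref{eq:rel2}: in the category presented by those relations alone, the words avoiding $h_{32}f_1$, $h_{13}f_2$, $h_{21}f_3$ form a basis (diamond lemma, no ambiguities), and one checks via the induced weight filtration that $h_{13}h_{21}$ is a closed, non-exact element; that category has infinite-dimensional cohomology, so both Proposition \ref{prop:moritad} and the present proposition would fail for it. This is a defect in the paper's presentation of $\D$ rather than in your proof: the intended $\D$ is $\wD$ modulo the dg ideal generated by all $p_\gamma$ with $\ell(\gamma)\ge 3$, whose presentation consists of \eqref{eq:rel1}, \eqref{eq:rel2} \emph{and} the three $hh$-relations. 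With that reading, which is also what makes your normal forms for $\D$ and the well-definedness of $p$ correct, your proof goes through; a careful write-up should state this explicitly.
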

\begin{proof} Left to the reader.
\end{proof}

In summary, we have a dg functors
\[
	\F \overset{i}{\lra} \wD \overset{p}{\lra} \D \overset{f}{\lra} \E^2
\]
with $\F$ and $\wD$ cofibrant, $p$ a quasi-equivalence, and $f$ a Morita equivalence.

\begin{prop}\label{prop:universal2}
	Let $\T$ be a triangulated category with $2$-periodic dg enhancement $\A$. 
	\begin{enumerate}
		\item A dg functor $\F \to \A$ representing a triangle in $\T$ lifts to a dg functor $\wD
			\to \A$ if and only if the triangle is distinguished.
		\item The pullback map 
			\[
				[\wD, \A] \lra [\F, \A] 
			\]
			is an inclusion with image given by the subset of isomorphism classes of distinguished symmetric
			triangles in $\T$. 
	\end{enumerate}
\end{prop}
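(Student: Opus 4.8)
The plan is to deduce both statements from a single computation: the homotopy fibre of the restriction map $i^{*}\colon \Map(\wD,\A)\to\Map(\F,\A)$ over a point $g$ is weakly contractible when the symmetric triangle classified by $g$ is distinguished, and empty otherwise. First I would make the lifting problem explicit. Write $A_{0},A_{1},A_{2}$ and the closed degree-$1$ maps $a_{1}\colon A_{0}\to A_{1}$, $a_{2}\colon A_{1}\to A_{2}$, $a_{3}\colon A_{2}\to A_{0}$ for the data determined by $g$, so that $g$ classifies the symmetric triangle $A_{0}\xrightarrow{a_{1}}A_{1}\xrightarrow{a_{2}}A_{2}\xrightarrow{a_{3}}A_{0}$ in the sense of Proposition \ref{prop:universal1}. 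Since $\wD$ is freely generated by the symbols $p_{\gamma}$, one per path $\gamma$ in $\Q^{2}$, a dg functor $\wD\to\A$ extending $g$ along $i$ is exactly a choice of a degree-$1$ morphism $\Phi_{\gamma}\colon A_{s(\gamma)}\to A_{t(\gamma)}$ for every $\gamma$, equal to the $a_{i}$ on the length-$1$ paths and compatible with the differential of $\wD$. Packaging these into one odd endomorphism $\Phi=\sum_{\gamma}\Phi_{\gamma}$ of $P:=A_{0}\oplus A_{1}\oplus A_{2}$, the defining relations of $\wD$ collapse to the single equation
\[
  d\Phi \;=\; \Phi^{2}-\id_{P},
\]
whose length-$2$ component is precisely \eqref{eq:rel1} and whose length-$3$ component is \eqref{eq:rel2} up to the coherence homotopy recorded by the longer paths.

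For part (1) I must show that this equation, with its linear term $\delta=a_{1}+a_{2}+a_{3}$ fixed, is solvable exactly when the triangle is distinguished. The equation exhibits $\Phi$ as a contracting homotopy, together with all its higher coherences, for the iterated cone (convolution) of the periodic sequence $\cdots\to A_{0}\xrightarrow{a_{1}}A_{1}\xrightarrow{a_{2}}A_{2}\xrightarrow{a_{3}}A_{0}\to\cdots$. Reading off the low-length data: the length-$2$ relations force the composites $a_{2}a_{1}$, $a_{3}a_{2}$, $a_{1}a_{3}$ to be null-homotopic, so the chosen homotopy makes $a_{2}$ factor through a map $\cone(a_{1})\to A_{2}$, and the remaining relations exhibit this map as a homotopy equivalence. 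Thus a solution exists if and only if $A_{2}$ is a cone of $a_{1}$ compatibly with $a_{2},a_{3}$, i.e.\ iff the symmetric triangle is distinguished. The forward implication is this reading-off; for the converse, if the triangle is distinguished there is a homotopy equivalence $A_{2}\simeq\cone(a_{1})$ matching the structure maps, and transporting the canonical contraction data of the mapping cone across this equivalence produces a solution $\Phi$. (As a sanity check, the universal case $\F\xrightarrow{i}\wD\xrightarrow{f\circ p}\E^{2}$ solves the equation for the triangle \eqref{eq:trianglediag}, which is distinguished by Proposition \ref{prop:cones} since $\E^{2}$ enhances the root category $D^{(2)}(A_{2})$, Proposition \ref{prop:r-n-morita}.)

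Part (2) follows once the non-empty fibres of $i^{*}$ are shown to be connected, indeed contractible. Because $i\colon\F\to\wD$ is a cofibration built by freely adjoining the generators $p_{\gamma}$ one path-length at a time with prescribed differentials, extending a functor is an inductive lifting problem: having solved the equation modulo paths of length $\ge\ell$, the obstruction to proceeding is a canonically closed cochain $O^{(\ell)}$, and the solutions at that stage form, when non-empty, a torsor over a space of cochains, any two being joined by a homotopy supplied by the next batch of generators. Granting the vanishing of the higher obstructions discussed below, the space of lifts of a fixed $g$ is therefore empty or contractible; and since $i$ is a cofibration into the fibrant object $\A$, an actual lift exists iff this space is non-empty. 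The $\pi_{0}$-statement of part (2) is then immediate: the image of $i^{*}$ is the set of classes with non-empty fibre, which by part (1) are exactly the distinguished symmetric triangles, while the injectivity of $i^{*}$ on homotopy classes is the connectedness of these fibres.

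The main obstacle is this contractibility of the fibre, that is, controlling the entire tower of higher homotopies recorded by the long paths of $\wD$. The subtlety is that $(P,\delta)$ is not literally a twisted complex, as the curvature $\delta^{2}$ is only null-homotopic, so the naive ``total complex is acyclic'' argument must be replaced by an order-by-order analysis. The point to verify is that once the single low-degree obstruction, namely the class measuring the failure of $A_{2}$ to be a cone of $a_{1}$, vanishes, all higher obstructions $O^{(\ell)}$ vanish automatically and the choices at each stage are unobstructed; I would establish this by exhibiting one honest solution from the cone's contraction data as above, which shows each higher $O^{(\ell)}$ is a coboundary and simultaneously serves as a basepoint onto which the successive torsors retract, yielding contractibility. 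A more formal alternative would use the quasi-equivalence $p$ and the Morita equivalence $f$ to replace $\wD$ by the enhancement $\E^{2}$ of $D^{(2)}(A_{2})$ and compare mapping spaces into a Morita-fibrant model of $\A$, but the obstruction-theoretic computation is the most self-contained.
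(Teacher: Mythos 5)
Your reduction of the lifting problem to the per-generator equations of $\wD$ (your formal Maurer--Cartan equation) is a correct and even illuminating repackaging, and your forward implication in (1) — a solution exhibits $A_2$ as a cone of $a_1$ via the length-$2$ and length-$3$ data — is essentially the paper's ``easy calculation''. The genuine gap is the converse direction of (1). If the symmetric triangle is distinguished, you only get an isomorphism $A_2 \cong \cone(a_1)$ in $\T = H^0(\A)$, commuting with the structure maps up to unspecified, incoherent homotopies. ``Transporting the canonical contraction data of the mapping cone across this equivalence'' is precisely the nontrivial step: strict algebraic data like your $\Phi$ cannot be carried across an equivalence that intertwines the $a_i$ only up to homotopy without a further mechanism. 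The paper supplies exactly such a mechanism: the functor $F$ representing the given triangle is homotopic, through the path object $P(\A)$, to a functor $G$ representing a cone triangle; $G$ lifts strictly to $\wD$ (by writing down the maps $r,s$ attached to the twisted complex $\cone(f)$); and then the lifting problem of the cofibration $i\colon \F \to \wD$ against the trivial fibration $p_2\colon P(\A) \to \A$ is solved, yielding a strict lift of $F$ itself. Your outline has no substitute for this step; worse, your proposed fix is circular, since the obstruction-theoretic argument is supposed to be anchored by ``one honest solution'', which you only construct for the homotopic cone triangle, not for the given $g$.

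The second gap is the contractibility (or even connectedness) of the fibres of $i^*$, on which all of your part (2) rests. The claim that exhibiting one solution ``shows each higher $O^{(\ell)}$ is a coboundary'' is not sound: the obstruction at stage $\ell$ depends on the partial solution chosen at the lower stages, not only on $g$, so a single global solution does not kill the obstructions along other branches of choices; moreover the solutions at a given stage form a torsor over \emph{cocycles}, not all cochains, and contractibility would further require showing that coboundary shifts are absorbed by the next batch of generators — none of this is carried out. Note that the proposition needs much less: only injectivity of $[\wD,\A] \to [\F,\A]$, which the paper obtains formally. The restriction map $[\wD,\A] \cong [\E^2,\A] \to [\A^2,\A]$ along the Morita equivalence $r_2^{\triangleleft}$ of Proposition \ref{prop:r-n-morita} is a bijection and factors through $[\F,\A]$, so $i^*$ is injective with no analysis of fibres at all. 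Your route, if completed, would prove a stronger mapping-space statement (in the spirit of Remark \ref{rem:classifying2}), but as written both the key converse in (1) and the fibre analysis in (2) are unproved.
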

\begin{proof}
	By definition, a distinguished symmetric triangle in $\T$ is a symmetric triangle in $\T$ that is
	isomorphic as a diagram in $\T$ to a {\em cone triangle} of the form
	\[
		\xymatrix@C=.5ex{
			A \ar[rr]^{+1} & & \Sigma B \ar[dl]^{+1} \\
			& \cone(f) \ar[ul]^{+1}&
		}
	\]
	where $f: A \to B$ is a closed degree $0$ morphism in $\A$ and $\cone(f)$ denotes the mapping cone
	construction in $\A$ (which is pre-triangulated in the sense of \cite{bondal-kapranov}).
	Therefore, by the argument of Proposition \ref{prop:universal1}, a symmetric triangle,
	represented by a dg functor $F: \F \to \A$, is distinguished, if and only if there exists
	a commutative diagram 
	\[
		\xymatrix{
			& \A\\
			\F \ar[r]^H \ar[ur]^{F} \ar[dr]_{G} & P(\A) \ar[u]_{p_1}\ar[d]^{p_2}\\
			& \A
		}
	\]
	in $\dgcatt$ where the symmetric triangle represented by $G$ is a cone triangle. We
	explicitly verify that any cone triangle in $\T$ can be lifted to a dg functor
	$\wD \to \A$ as follows. By construction of the object $\cone(f)$ as a twisted
	complex, it comes equipped with morphisms in $\A$ as depicted in the diagram
	\[
		\xymatrix@C=.5ex{
			A \ar@/_4ex/[dr]_-{r} \ar[rr]^{f_1} & & \Sigma B
			\ar[dl]^-{f_2} \\
			& \cone(f) \ar[ul]^{f_3} \ar@/_4ex/[ur]_-{s}  &
		}
	\]
	with $f_1,f_2,f_3$ closed morphisms of degree $1$, satisfying $f_2 f_1 = dr$, $f_1 f_3 =
	ds$, $f_3 f_2 = 0$, $r f_3 + f_2 s = \id_{\cone(f)}$, $f_3 r = \id_A$, $s f_2 = \id_{\Sigma
	B}$. Comparing these formulas to the defining formulas \eqref{eq:rel1} and \eqref{eq:rel2}
	of $\D$, there is an apparent dg functor $\D \to \A$ representing this triangle which can be
	precomposed with $p: \wD \to \D$ to obtain a dg functor $\wD \to \A$.
	
	Hence, we obtain a commutative diagram
	\[
		\xymatrix{
			\F \ar[r]^H \ar[d]_i & P(\A) \ar[d]^{p_2} \\
			\wD \ar@{-->}[ur] \ar[r]_{\widetilde{G}} & \A
		}
	\]
	in $\dgcatt$.
	Since $i$ is a cofibration and $p_2$ a trivial fibration, we can fill in the indicated
	functor $\wD \to P(\A)$. Composing this functor with $p_1$, we obtain a functor
	$\widetilde{F}: \wD \to \A$ lifting $F$. On the other hand, an easy calculation shows that
	any functor $\F \to \A$ which is a restriction of a functor $\wD \to \A$ represents a
	distinguished symmetric triangle. We have shown that the map $[\wD,\A] \to [\F,\A]$ surjects
	onto those functors which represent distinguished triangles in $\T$. It remains to show that
	the map is injective. We have a commutative diagram in $\dgcatt$
	\[
		\xymatrix{ & [\F, \A] \ar[dr]^{j^*} & \\
		[\E^2, \A] \cong [\wD, \A] \ar[ru]^{i^*} \ar[rr]_{\cong} &  & [\A^2, \A]
		}
	\]
	where the horizontal map is a bijection by Proposition \ref{prop:r-n-morita}. In particular, $i^*$
	must be injective which concludes the argument.
\end{proof}

\begin{rem}\label{rem:classifying2} In analogy with Remark \ref{rem:classifying1}, 
	Proposition \ref{prop:universal2} implies that the connected components 
	of the mapping space $\Map(\E^2, \A) \simeq \Map(\D,\A) \simeq \Map(\wD,\A)$ 
	are in bijective correspondence with isomorphism classes of distinguished triangles in $\T$. 
\end{rem}
	
\begin{rem} The analogue of statement (1) in Proposition \ref{prop:universal2} for
	$A_\infty$-categories is a result due to M. Kontsevich (cf. Proposition 3.8 in
	\cite{seidel-book}). While in the context of dg-categories we are forced to work with the
	(rather large) cofibrant replacement $\wD$ of $\D$, using $A_\infty$-categories and
	$A_\infty$-functors, one does not have to replace $\D$. On the contrary, one can pass to an
	even more economic {\em minimal model} of $\D$: a simple homological perturbation
	calculation shows that a minimal model $\overline{\D}$ is given by the $\A_{\infty}$-category with objects
	$0$, $1$, $2$, obtained by adjoining closed morphisms 
	$f_1: 0 \to 1$, $f_2: 1 \to 2$, $f_3: 2 \to 1$ of degree $1$ whose pairwise composition
	equals $0$, equipped with triple operations
	\[
		m_3(f_3,f_2,f_1) = \id_0 \quad\quad 
		m_3(f_1,f_3,f_2) = \id_1 \quad\quad 
		m_3(f_2,f_1,f_3) = \id_2 .
	\]
	The natural morphism  $\F \to \overline{\D}$ has the property that, given a
	$2$-periodic $A_\infty$-category $\A$, a (symmetric) triangle in $H^0(\A)$ represented by an $A_\infty$-functor $\F \to
	\A$, is exact if and only if this functor admits a lift to an $A_\infty$-functor
	$\overline{\D} \to \A$.
	Of course, the simplicity of the category $\overline{\D}$, when compared to $\wD$, comes at a price: the
	complexity is now hidden in the amount of data needed to specify an $A_\infty$-functor.
\end{rem}

\subsection{Cyclic duality and Morita duality}
\label{subsec:cyc-morita}

Given a $\L$-category $\C$,  the {\em dual} $\L$-category is defined as  $\C^{\vee} := \IFun_{\L}(\C, \Q^0)$, see \cite{drinfeld}.
The duality operation provides a functor
 \[
(-)^\vee:  \LCat^\op \lra \LCat.
\]
It is not a perfect duality, i.e., the canonical functor $\C\to \C^{\vee\vee}$ need not be an
isomorphism, nor an equivalence of $\L$-categories. However, it 
induces a perfect duality functor on cyclic ordinals
\[
 (-)^*:  {\bf\Lambda}^\op\lra{\bf\Lambda}, \quad I\mapsto I^*, \quad I^{**}=I, \quad \Q^{I^*}\simeq (\Q^I)^\vee. 
\]
Explicitly,  for a cyclic ordinal $I$, the dual cyclic ordinal $I^*$ is the set of {\em interstices}, i.e.,  of minimal cyclic intervals in $I$:

\begin{center}
\begin{tikzpicture}
   
 \node (origin) at (0,0) (origin) {}; 
  \node (origin') at (5,0) (origin') {}; 

\node[xshift=5cm]  (P0) at (0:2cm)  {$i'$}; 
\node [xshift=5cm] (P2) at (1*72:2cm) {$i$};
 \node [xshift=5cm] (P4) at (2*72:2cm) {$\bullet$}; 
 \node [xshift=5cm] (P1) at (3*72:2cm) {$\bullet$}; 
  \node [xshift=5cm] (P3) at (4*72:2cm) {$\bullet$}; 
  
 \draw[bend left,->]  (P2) to node [auto] {An interstice} (P0);
 \draw[bend left,->]  (P0) to node [auto] {Another interstice} (P3);
\draw[bend left,->]  (P3) to node [auto] {} (P1);
\draw[bend left,->]  (P1) to node [auto] {} (P4);
\draw[bend left,->]  (P4) to node [auto] {} (P2);
               
 \end{tikzpicture}
\end{center}

On the other hand, as we have seen in \S \ref{subsec:morita}, the Morita homotopy category $\Ho(\dgcat^{(2)})$ admits a
 duality functor
\[
(-)^\vee: \Ho(\dgcat^{(2)})^{\op} \lra \Ho(\dgcat^{(2)}), \quad 
\A \mapsto \A^\vee =    \IRHom(\A, \Perf^{(2)}_\k). 
\]
The goal of this section is to understand how the functor
\[
  \MF: \LCat \lra \dgcat^{(2)}
\]
relates these duality functors.

To this end, for  a $\L$-category $\C$ we define a dg-category $\widetilde \LF(\C)$ completely analogous
to $\LF(\C)$ but with the substitutions $(\varphi,\psi) \mapsto  (\psi, -\varphi)$ and 
$(\varphi',\psi') \mapsto  (\psi', -\varphi')$
in the formulas \eqref{eq:pathendg} for the differentials on the mapping complexes. To signify this
sign change in a suggestive way, we denote the objects of $\widetilde \LF(\C)$ by 
\[
	\xymatrix{
	  x_1 \ar@/_1ex/[r]_{\psi} & x_0 \ar@/_1ex/[l]_{-\varphi}
	}.
\]

\begin{rem} \label{rem:intduality}  Assume that $\C$ itself has a
``duality"  functor which a $\L$-equivalence  $\C^{\op} \lra \C^{\vee}, x \mapsto x^{\vee}$.  Then dualizing an object 
	  \[
		  \xymatrix{
		  x_1 \ar@/_1ex/[r]_\varphi & x_0 \ar@/_1ex/[l]_\psi
	  }
	  \]
	  of $\LF(\C)$ using the Koszul signs rule yields the object
	  \[
		\xymatrix{
			x_1^{\vee} \ar@/_1ex/[r]_{\psi^{\vee}} & x_0^{\vee} \ar@/_1ex/[l]_{-\varphi^{\vee}}
		}
	  \]
	  of $\widetilde \LF(\C^{\vee})$. In such a case, this association extends to an
	  isomorphism of dg-categories 
	   $ \LF(\C)^{\op} \cong \widetilde \LF(\C^{\vee})$. 
         
\end{rem}

Given a $\L$-category $\C$, there is a natural functor of categories
\begin{equation}\label{eq:tensor}
    \C^{\vee} \times \C \lra \on{Vect}_\k,  \,\, (F,x) \mapsto F \otimes x
\end{equation}
where we define $F \otimes x$ to be the free $\k$-vector space on the set $\Hom_{\Q^0}(0,F(x))$.

 \begin{defi} Let $\C$ be a $\L$-category.  We define a dg functor
  \[
    \eta_{\C}: \widetilde \LF(\C^{\vee}) \otimes \LF(\C) \to \Mod^{(2)}_\k
  \] 
  by the formula
  \[
	(\xymatrix{
	  F_1 \ar@/_1ex/[r]_{\xi} & F_0 \ar@/_1ex/[l]_{-\theta}
	},
	  \xymatrix{
	  x_1 \ar@/_1ex/[r]_\varphi & x_0 \ar@/_1ex/[l]_\psi
	  }
	) \mapsto 
	(\xymatrix{
	  F_1 \ar@/_1ex/[r]_{\xi} & F_0 \ar@/_1ex/[l]_{-\theta}
	}) \otimes (
	  \xymatrix{
	  x_1 \ar@/_1ex/[r]_\varphi & x_0 \ar@/_1ex/[l]_\psi
	  })
  \]
  where the tensor product is given by \eqref{eq:tensor} and differential determined by the Koszul
  rule where $F_0$ and $x_0$ are considered of even degree while $F_1$ and $x_1$ are considered of odd
  degree. 
\end{defi}
	
\begin{prop} \label{prop:dualfunctoriality} For any $\L$-functor $F:\C \to \D$ of $\Zp$-categories, the diagram of dg categories
  \[
    \xymatrix{
      \widetilde \LF(\D^{\vee}) \otimes \LF(\C)\ar[d]^{(F^{\vee},\id)} \ar[r]^{(\id,F)} & \widetilde \LF(\D^{\vee}) \otimes \LF(\D)
      \ar[d]^{\eta_{\D}}\\
      \widetilde \LF(\C^{\vee}) \otimes \LF(\C) \ar[r]^{\eta_{\C}} & \Mod^{(2)}_\k
    }
  \] 
  commutes.
\end{prop}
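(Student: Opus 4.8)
The plan is to verify that the square commutes \emph{strictly} --- that is, that $\eta_{\D}\circ(\id,F)$ and $\eta_{\C}\circ(F^{\vee},\id)$ are literally the same dg functor $\tLF(\D^\vee)\otimes\LF(\C)\to\Mod^{(2)}_\k$, not merely isomorphic ones. The only structural input I would need is that the duality $F^{\vee}\colon\D^{\vee}\to\C^{\vee}$ is precomposition (whiskering) with $F$: on objects $F^{\vee}(G)=G\circ F$, and on a natural transformation $\eta\colon G\to G'$ in $\D^{\vee}$ one has $(F^{\vee}\eta)_{y}=\eta_{F(y)}$ for every $y\in\Ob(\C)$. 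With this in hand the argument is a direct unravelling of the two composites on objects and on Hom-complexes.

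First I would compare the two composites on an object $\bigl((P_0,P_1;\xi,\theta),(x_0,x_1;\varphi,\psi)\bigr)$, where $(P_0,P_1;\xi,\theta)$ is a loop factorization in $\D^{\vee}$ and $(x_0,x_1;\varphi,\psi)$ one in $\C$. Along the top, $(\id,F)$ replaces the second factor by the loop factorization $Fx=(Fx_0,Fx_1;F\varphi,F\psi)$ in $\D$, and $\eta_{\D}$ produces a $\Zt$-graded vector space whose summands $P_i\otimes Fx_j$ are, by \eqref{eq:tensor}, the free $\k$-vector spaces on the sets $\Hom_{\Q^0}(0,P_i(Fx_j))$. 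Along the left, $(F^{\vee},\id)$ replaces the first factor by $(F^{\vee}P_0,F^{\vee}P_1;F^{\vee}\xi,F^{\vee}\theta)$, and $\eta_{\C}$ produces the summands $(F^{\vee}P_i)\otimes x_j$, free on $\Hom_{\Q^0}(0,(F^{\vee}P_i)(x_j))=\Hom_{\Q^0}(0,P_i(Fx_j))$. These indexing sets coincide summand by summand, and the Koszul degree convention ($P_0,x_0$ even, $P_1,x_1$ odd) is the same on both sides, so the two underlying graded vector spaces are equal.

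Next I would check that the differentials agree. On the top path the differential is assembled from the maps induced by $\xi,\theta$ acting on the $P$-factor (through $\xi_{Fx_j},\theta_{Fx_j}$) and by $F\varphi,F\psi$ acting on the $Fx$-factor (through $P_i(F\varphi),P_i(F\psi)$). On the left path it is assembled from $F^{\vee}\xi,F^{\vee}\theta$ acting through $(F^{\vee}\xi)_{x_j},(F^{\vee}\theta)_{x_j}$ and from $\varphi,\psi$ acting through $(F^{\vee}P_i)(\varphi),(F^{\vee}P_i)(\psi)$. Each matching pair is equal by the whiskering identity $(F^{\vee}\xi)_{x_j}=\xi_{Fx_j}$ and by functoriality $(F^{\vee}P_i)(\varphi)=P_i(F\varphi)$. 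The identical computation, applied to elementary tensors of morphisms --- using that $\Hom$ in $\otimes_{\L}$ is the factorwise $\otimes_{\L}$ of $\Hom$-complexes --- handles the action on Hom-complexes. The one place to be careful, and the main obstacle, is the bookkeeping of the Koszul rule together with the $(\varphi,\psi)\mapsto(\psi,-\varphi)$ sign modification built into $\tLF$; but these signs are imposed intrinsically by the constructions $\tLF$ and $\eta$ and do not interact with $F^{\vee}$, so they are placed identically on both sides and match automatically. Hence the square commutes on the nose.
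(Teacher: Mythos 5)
Your proof is correct and follows essentially the same route as the paper's: the paper reduces the statement to the commutativity of the underlying diagram of plain categories $\D^{\vee}\times\C\to\on{Vect}_\k$ built from the pairing \eqref{eq:tensor}, which is exactly the whiskering identity $(F^{\vee}G)\otimes x = G\otimes F(x)$ (and its morphism-level analogue) that you verify, with the dg-level structure (gradings, differentials, signs) then matching automatically because it is imposed by the same recipe on both sides. Your only slip is cosmetic: the tensor product in the diagram is the dg tensor product over $\k$, not $\otimes_{\L}$, but this does not affect the argument.
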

\begin{proof}
	This follows immediately from the fact that the diagram of categories
	  \[
	    \xymatrix{
		    \D^{\vee} \times \C \ar[r]^{(\id,F)}\ar[d]_{(F^{\vee}, \id)} &  \D^{\vee} \times \D \ar[d]^{\otimes}\\
		    \C^{\vee} \times \C \ar[r]^{\otimes} &  \on{Vect}_\k
	    }
	  \]
	commutes.
\end{proof}

\begin{prop} 
	Let $\C=\Q^n$ Then the Yoneda
	embedding
	\[
		\C^{\op} \lra \IFun_{\L}(\C, \LSet)
	\]
	factors over the canonical embedding $\C^{\vee} = \IFun_{\L}(\C, \co{0}) \to \IFun_{\L}(\C,
	\LSet)$ and induces a $\L$-equivalence between $\C^\op$ and the dual $\L$-category $\C^\vee$,
	 and hence, by Remark \ref{rem:intduality}, an isomorphism of dg-categories
	\[
		\LF(\C)^{\op} \cong \widetilde \LF(\C^{\vee}).   \hskip 2cm\qed
	\]
	
\end{prop}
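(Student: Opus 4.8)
The plan is to reduce the assertion to its first half: the construction, via the Yoneda embedding, of a $\L$-equivalence between $\C^\op$ and $\C^\vee$ in the case $\C=\Q^n$. Once this duality $\L$-equivalence $x\mapsto x^\vee$ is in hand, it is precisely the datum required by Remark \ref{rem:intduality}, which then supplies the desired isomorphism $\LF(\C)^\op\cong\widetilde{\LF}(\C^\vee)$, the Koszul sign being absorbed into the passage from $\LF$ to $\widetilde{\LF}$. Thus essentially all of the content lies in analyzing the representable $\L$-functors $h^i:=\Hom_{\Q^n}(i,-)\colon\Q^n\to\LSet$.

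The key computation is that each morphism $\L$-set of $\Q^n$ is free of rank one. A morphism $i\to j$ in $\Q^n$ is a path in the circular quiver, namely the shortest path $\varphi_{ij}$ (of length $(j-i)\bmod(n+1)$, read as $\id_i$ when $i=j$) possibly followed by some number of full loops; since composing with the full loop $w_i$ realizes exactly the generator of the $\L$-action, one gets $\Hom_{\Q^n}(i,j)=\L\cdot\varphi_{ij}$, free on the single generator $\varphi_{ij}$. Consequently every $h^i$ takes values in free rank-one $\L$-sets. Identifying the one-object $\L$-category $\Q^0$ with the full $\LSet$-subcategory spanned by the free rank-one $\L$-set $\L$ (note $\End_{\LSet}(\L)=\L=\Hom_{\Q^0}(0,0)$ as $\L$-sets), I would record that the chosen generators $\varphi_{ij}$ trivialize the $h^i(j)\cong\L$ compatibly, so that each $h^i$ factors through $\Q^0\hookrightarrow\LSet$. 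This exhibits the claimed factorization of the Yoneda embedding through the canonical embedding $\C^\vee=\IFun_\L(\C,\co{0})\hookrightarrow\IFun_\L(\C,\LSet)$.

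To see that the factored functor $\C^\op\to\C^\vee$ is an $\L$-equivalence, I would treat full faithfulness and essential surjectivity separately. Full faithfulness is the enriched Yoneda lemma over the closed symmetric monoidal category $\LSet$: one has $\IFun_\L(\Q^n,\LSet)(h^i,h^j)\cong h^j(i)=\Hom_{\Q^n}(j,i)=\Hom_{\C^\op}(i,j)$ as $\L$-sets, and since $\Q^0\hookrightarrow\LSet$ is full these natural-transformation $\L$-sets are unchanged upon passing to $\C^\vee$. For essential surjectivity I would enumerate the objects of $\C^\vee$: an $\L$-functor $G\colon\Q^n\to\Q^0$ is determined by the values $c_k:=G(a_k)\in\L$ on the generating arrows $a_k\colon k\to k+1$, subject to the single constraint $\sum_k c_k=G(w_i)=1$ imposed by $\L$-functoriality on the full loop; hence $\C^\vee$ has exactly $n+1$ objects, one for each choice of which $c_k$ equals $1$. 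Tracing the previous paragraph shows that $h^i$ sends $a_{i-1}\mapsto 1$ and all other generators to $0$, so as $i$ runs over $\ZZ/(n+1)$ the representables hit every object of $\C^\vee$. This gives surjectivity on objects, hence the $\L$-equivalence $\C^\op\simeq\C^\vee$.

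Finally I would invoke Remark \ref{rem:intduality} with this duality $\L$-equivalence to conclude $\LF(\C)^\op\cong\widetilde{\LF}(\C^\vee)$. The one genuinely delicate point is the rank-one freeness of the morphism $\L$-sets, and in particular the wrap-around: post-composing the generator $\varphi_{i,i-1}$ with $a_{i-1}$ produces the full loop $w_i$, i.e.\ the generator of the $\L$-action rather than a new shortest path. Getting this bookkeeping right in the cyclic indices is exactly what pins down the identification of $h^i$ with the functor sending $a_{i-1}\mapsto 1$ and all other generators to $0$, and thereby yields both the factorization through $\Q^0$ and essential surjectivity; everything else is formal enriched category theory together with the cited remark.
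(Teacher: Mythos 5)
Your proof is correct, and it takes the only approach available: the paper itself offers no argument (the proposition ends with \qed), and its statement already prescribes your strategy of factoring the Yoneda embedding through $\C^{\vee}=\IFun_{\L}(\C,\co{0})$ and then invoking Remark \ref{rem:intduality}. The details you supply --- each $\Hom_{\Q^n}(i,j)$ is a free $\L$-set of rank one on the shortest path $\varphi_{ij}$; an $\L$-functor $\Q^n\to\Q^0$ amounts to a tuple $(c_k)_{k\in\ZZ/(n+1)}$ in $\L$ with $\sum_k c_k=1$, so that $\C^{\vee}$ has exactly $n+1$ objects; and the representable $h^i$ lands on the object with $c_{i-1}=1$ precisely because of the wrap-around identity $a_{i-1}\circ\varphi_{i,i-1}=w_i$ --- are exactly the verifications the authors omitted, and they all check out.
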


\begin{prop} Let $\C=\Q^n$. Then there is
	a commutative diagram
  \[
	  \xymatrix@C=7pc{
		  \widetilde \LF(\C^{\vee}) \otimes \LF(\C) \ar[r]^-{\eta_{\C}} & \Mod^{(2)}_\k \\
		  \LF(\C)^{\op} \otimes \LF(\C) \ar[r]_-{\Hom_{\LF(\C)}(-,-)} \ar[u]^{\cong} &
		  \Mod^{(2)}_\k\ar[u].
	  }
  \] 
  In particular, the functor
  \[
	\widetilde \LF(\C^{\vee}) \lra \Fun(\LF(\C), \Mod^{(2)}_\k),
  \]
  adjoint to $\eta_{\C}$, factors over $\IRHom(\LF(\C), \Perf_\k) \subset \Fun(\LF(\C), \Mod_\k^{(2)})$. The
  induced functor 
  \[
	\LF(\C^{\vee}) \lra \IRHom(\LF(\C), \Mod^{(2)}_\k) 
  \]
  is a Morita equivalence which exhibits the dg-category $\widetilde \LF(\C^{\vee})$ as the Morita dual of
  $\LF(\C)$. 
\end{prop}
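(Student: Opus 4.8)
The plan is to reduce everything to the Yoneda identification $\LF(\C)^\op \cong \widetilde\LF(\C^\vee)$ established in the preceding proposition, and then to recognize the adjoint of $\eta_\C$ as the dg-Yoneda embedding of $\LF(\C)^\op$ into its module category. First I would verify the commutative square, which is a direct comparison of definitions. At the level of plain categories, Proposition \ref{prop:dualfunctoriality} rests on the evaluation pairing $\C^\vee \times \C \to \Vect_\k$, $(F,x)\mapsto \k[\Hom_{\Q^0}(0,F(x))]$; under the identification $\C^\vee \simeq \C^\op$ coming from Yoneda, this pairing becomes $(y,x)\mapsto \k[\Hom_\C(y,x)] = \Hom_{\k[\C]}(y,x)$. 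Lifting to loop factorizations, the two $\Zt$-graded pieces of $\eta_\C(y^\vee,F)$ are then exactly the summands \eqref{eq:pathen} of $\Hom^\bullet_{\LF(\C)}(y,F)$, and the sign twist $(\varphi,\psi)\mapsto(\psi,-\varphi)$ built into $\widetilde\LF$ is precisely what matches the Koszul differential of $\eta_\C$ to the differential \eqref{eq:pathendg}. This is the step where the signs must be tracked carefully, and it is the main technical (though essentially routine) obstacle.

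Granting the square, the functor adjoint to $\eta_\C$ is, under the isomorphism $\LF(\C)^\op \cong \widetilde\LF(\C^\vee)$, identified with the dg-Yoneda embedding $\LF(\C)^\op \to \Fun(\LF(\C),\Mod^{(2)}_\k) = \Mod_{\LF(\C)^\op}$ sending $y$ to $\Hom^\bullet_{\LF(\C)}(y,-)$. Representable modules are cofibrant and perfect, so this functor factors through $\IRHom(\LF(\C),\Perf_\k)$; and since, by the next point, $\LF(\C)$ is proper, each value $\Hom^\bullet_{\LF(\C)}(y,x)$ lies in $\Perf_\k$, so the modules in the image are pseudo-perfect and the functor in fact factors through the Morita dual $\LF(\C)^\vee$.

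It remains to show this functor is a Morita equivalence, for which I would invoke the dualizability criterion of Proposition \ref{prop:dg-dualizable}. By Proposition \ref{prop:r-n-morita}, $\LF(\C) = \E^n$ is Morita equivalent to $\A^n$, the $\Zt$-graded $\k$-linear envelope of the $A_n$-quiver; the latter is finite-dimensional (hence proper) and hereditary (hence smooth), and as both properties are Morita-invariant, $\LF(\C)$ is smooth and proper. Proposition \ref{prop:dg-dualizable} then gives that a dg-module over $\LF(\C)^\op$ is perfect if and only if it is pseudo-perfect, so the Morita dual satisfies $\LF(\C)^\vee \simeq \Perf_{\LF(\C)^\op}$. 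Under this identification the functor above is precisely the canonical embedding $\beta_{\LF(\C)^\op}\colon \LF(\C)^\op \to \Perf_{\LF(\C)^\op}$, which is a Morita equivalence by part (b) of the proposition on Morita-fibrant replacements. Composing with the isomorphism $\LF(\C)^\op \cong \widetilde\LF(\C^\vee)$ exhibits $\widetilde\LF(\C^\vee)$ as the Morita dual of $\LF(\C)$, as claimed.
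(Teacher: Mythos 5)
The paper states this proposition with no proof at all (no proof environment follows it), so there is no argument of the authors to measure yours against; what can be judged is whether your argument is correct and whether it runs on the rails the paper has laid down, and on both counts it essentially does. Your reduction is the natural one: by the preceding proposition, $\Hom_{\Q^n}(i,j)$ is a free $\L$-set of rank one, so under the Yoneda equivalence $\C^{\op}\simeq\C^{\vee}$ the pairing \eqref{eq:tensor} becomes $(y,x)\mapsto\Hom_{\k[\C]}(y,x)$, the graded pieces of $\eta_{\C}(y^{\vee},F)$ match \eqref{eq:pathen}, and the square reduces to matching the Koszul differential against \eqref{eq:pathendg} -- which is exactly what the sign substitution defining $\widetilde\LF$ is engineered to achieve. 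Be aware that you assert this last point rather than verify it: that computation \emph{is} the content of the square, and in a written-up proof it should be carried out (with the convention $d(a\otimes b)=(-1)^{|b|}da\otimes b+a\otimes db$ all four components of the differential agree on the nose; with the opposite convention one gets a stray sign, so the choice matters). Two further remarks. First, your justification of the factorization begins by saying representables are ``cofibrant and perfect''; perfectness is not the relevant condition here -- what Toën's description of $\IRHom(\LF(\C),\Perf_\k)$ requires is cofibrancy plus \emph{pseudo}-perfectness, and your next clause (properness of $\LF(\C)$) supplies the correct reason, so this is only a wording slip. Second, your insistence on smoothness is genuinely necessary and correctly handled: for a proper but non-smooth $\Ac$ the Yoneda functor $\Ac^{\op}\to\Ac^{\vee}$ fails to be a Morita equivalence, so one really needs Proposition \ref{prop:r-n-morita} together with the Morita invariance of smooth-and-proper (equivalently, of dualizability, which is a property in $\Hmot$) to transport smoothness of the hereditary algebra $\A^n$ to $\E^n$, and then Proposition \ref{prop:dg-dualizable} plus the fact that $\beta_{\LF(\C)^{\op}}$ is a Morita equivalence finishes the argument. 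This is precisely the chain of results the surrounding text assembles, so your proof can be regarded as the intended one.
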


\begin{cor}\label{cor:cyc-mor}  We have  a commutative diagram 
  \[
	  \xymatrix@C=7pc{
		  \bf\Lambda^{\op} \ar[r]^{\LF} \ar[d]_{*} &
		  \Ho(\dgcat^{(2)})^{\op} \ar[d]^{\vee} \\
		  \bf\Lambda \ar[r]^{\LF^{\op}} & \Ho(\dgcat^{(2)})
	  }
  \] 
  	relating the duality functors on $\bf\Lambda$ and $\Ho(\dgcat^{(2)})$.
\end{cor}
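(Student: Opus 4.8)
The plan is to regard Corollary~\ref{cor:cyc-mor} as the global, functorial shadow of the objectwise dualities proved in the two immediately preceding propositions, the only genuine work being to promote those objectwise equivalences to a natural isomorphism of functors on ${\bf\Lambda}$. Fixing a cyclic ordinal $I$ and writing $\C=\Q^I$, I would first assemble, in $\Ho(\dgcat^{(2)})$, the chain
\[
  \LF(\Q^I)^\vee \;\simeq\; \widetilde\LF\bigl((\Q^I)^\vee\bigr) \;\cong\; \widetilde\LF(\Q^{I^*}) \;\simeq\; \LF(\Q^{I^*}).
\]
The first equivalence is the content of the proposition exhibiting $\widetilde\LF(\C^\vee)$ as the Morita dual of $\LF(\C)$; the middle isomorphism is $\widetilde\LF$ applied to the perfect $\L$-categorical duality $(\Q^I)^\vee\cong\Q^{I^*}$ recorded above; and the last equivalence identifies $\widetilde\LF$ with $\LF$ (discussed below). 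Reading off the outer terms yields precisely the objectwise commutativity $(-)^\vee\circ\LF \simeq \LF\circ(-)^*$ of the square.

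For the last equivalence $\widetilde\LF(\D)\simeq\LF(\D)$, natural in $\D$, I would use the shift on loop factorizations. Since $\Sigma^2=\Id$ in the $2$-periodic setting, interchanging $x_0$ and $x_1$ in a loop factorization $\{x_1\to x_0\to x_1\}$ realizes $\Sigma$ as a strict involution of $\LF(\D)$; the sign substitution $(\varphi,\psi)\mapsto(\psi,-\varphi)$ defining $\widetilde\LF$ is exactly the Koszul sign produced by this interchange under the convention (already used for $\eta_\C$) that $x_0$ is even and $x_1$ odd. Hence $\Sigma$ upgrades to a dg-isomorphism $\LF(\D)\xrightarrow{\cong}\widetilde\LF(\D)$, defined uniformly in the loop-factorization data and therefore natural in $\D$. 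This is the same slippage already implicit in the preceding proposition, whose induced Morita equivalence is phrased directly in terms of $\LF(\C^\vee)$, and it is exactly what licenses writing $\LF$ rather than $\widetilde\LF$ in the bottom row of the square.

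The substantive point, and the step I expect to be the main obstacle, is naturality in $I$: one must check that these objectwise equivalences cohere into a single natural isomorphism of functors ${\bf\Lambda}^{\op}\to\Ho(\dgcat^{(2)})$, so that the square commutes as a diagram of functors and not merely on objects. Here I would invoke Proposition~\ref{prop:dualfunctoriality}, which asserts precisely the naturality in $\C$ of the pairing $\eta_\C$ inducing the first equivalence, the square with legs $(\id,F)$ and $(F^\vee,\id)$ commuting for every $\L$-functor $F$. Combined with the functoriality of $(-)^\vee$ on $\LCat$, the perfect duality $(-)^*$ on ${\bf\Lambda}$ (with $I^{**}=I$ and $\Q^{I^*}\cong(\Q^I)^\vee$), the Koszul-sign duality of Remark~\ref{rem:intduality}, and the naturality of the shift identification above, this renders each arrow of the displayed chain natural in $I$. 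Since the cocyclic dg-category $\LF(\Q^{(-)})$ extends canonically from $\Lambda$ to the large category ${\bf\Lambda}$, it is enough to verify this on the morphisms of ${\bf\Lambda}$, i.e.\ on $\Hom_{\LCat}(\Q^I,\Q^J)$, which is exactly what Proposition~\ref{prop:dualfunctoriality} controls; matching all three dualities compatibly across these morphisms at once is the only delicate bookkeeping, after which the commutativity of the square follows.
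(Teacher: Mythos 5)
Your first two links do follow the paper's intended route: the Morita duality $\LF(\Q^I)^\vee\simeq\widetilde{\LF}((\Q^I)^\vee)$ is induced by the pairing $\eta_{\C}$, whose naturality is exactly Proposition \ref{prop:dualfunctoriality}, and combining it with $(\Q^I)^\vee\simeq\Q^{I^*}$ gives the natural equivalence $\LF(\Q^I)^\vee\simeq\widetilde{\LF}(\Q^{I^*})$. The gap is your third link, the claimed dg-isomorphism $\LF(\C)\cong\widetilde{\LF}(\C)$ realized by the shift. The shift of a loop factorization is the \emph{plain} swap
\[
(x_0,x_1,\varphi,\psi)\mapsto(x_1,x_0,\psi,\varphi),\qquad
(\alpha,\beta;\gamma,\delta)\mapsto(\beta,\alpha;\delta,\gamma)
\]
in the notation of \eqref{eq:pathen}; it involves no Koszul signs at all (this is why $\Sigma[i,j]=[j,i]$ holds on the nose in $\T^n$), and it is an automorphism of $\LF(\C)$ itself, not a functor into $\widetilde{\LF}(\C)$. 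If you insert the sign needed to land in $\widetilde{\LF}(\C)$, say by sending the odd components to $(\delta,-\gamma)$, the resulting map $T$ is not a dg-functor: checking against \eqref{eq:pathendg} one finds $T(da)=d(Ta)$ for $a$ even but $T(da)=-d(Ta)$ for $a$ odd, and moreover $T(b'b)=-T(b')T(b)$ for $b,b'$ odd; no parity-dependent rescaling can repair this, since multiplicativity forces the correcting scalars to satisfy $c_0=1$, $c_1^2=1$, while the differentials force $c_1=c_0$ and $c_1=-c_0$ simultaneously. The failure is not one of conventions: a dg-equivalence $\LF(\C)\simeq\widetilde{\LF}(\C)$ natural in $\C$ would, applied to Example \ref{ex:MF} with $R=\RR[x]$, $w=x^2$, identify $\MF(\RR[x],x^2)$ with $\MF(\RR[x],-x^2)$, whose generators have endomorphism superalgebras the two real Clifford algebras $Cl_{\pm 1}$, which are not Morita equivalent as $\Zt$-dg-algebras. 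The genuine relation between the two constructions is the \emph{contravariant} one of Remark \ref{rem:intduality}, namely $\widetilde{\LF}(\C^{\vee})\cong\LF(\C)^{\op}$; and the occurrence of $\LF(\C^\vee)$ rather than $\widetilde{\LF}(\C^\vee)$ in the Morita-duality proposition is a typo, not a licensed identification of $\widetilde{\LF}$ with $\LF$.

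Because of this, what you prove is not the statement of the corollary. Your bottom composite is $\LF\circ(-)^*$, whereas the corollary's bottom row $\LF^{\op}$ is the pointwise opposite, $I\mapsto\LF(\Q^I)^{\op}$, so that commutativity asserts $(\E^I)^\vee\simeq(\E^{I^*})^{\op}$ naturally in $I$. This $\op$ is load-bearing: it is precisely what is used in \eqref{eq:duale}, where the cyclic object $\E_\bullet\colon\cn\mapsto(\E^{\cn^*})^{\op}$ is identified with $\IRHom(\E^{\bullet},\Perft_{\k})$. Your version, together with the corollary, would force a natural equivalence of cocyclic objects $\E^{\bullet}\simeq\op\circ\E^{\bullet}$. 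Objectwise this looks harmless, since $\LF(\Q^n)^{\op}$ and $\LF(\Q^n)$ are both Morita equivalent to $\A^n$; but that identification passes through the reversal $A_n\cong A_n^{\op}$ of the quiver, i.e., through the nontrivial involution of the cyclic category, and is therefore not natural over ${\bf\Lambda}$ --- this is exactly the distinction the $\op$ in the corollary records. The final step must instead be the one the paper takes: use Remark \ref{rem:intduality} (with the Yoneda self-duality of the $\Q^n$) to rewrite $\widetilde{\LF}$ in terms of the opposite of $\LF$, keeping the $\op$ rather than erasing it.
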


\section{Cyclic 2-Segal objects}
\label{section:cyclicsegal} 
\subsection{The 1- and 2-Segal conditions}\label{subset:simplicial-cyclic}
We denote by $\Delta$ the category of finite nonempty ordinals $[n]=\{0,1,...,n\}$ and monotone
maps. For a category $\Cb$, a simplicial object $X$ in $\Cb$ is defined to be a functor $X:
\Delta^{\op} \to \Cb$ and we denote by $\Cb_\Delta$ the category of simplicial objects in $\Cb$.
Similarly, a cyclic object $X$ in $\Cb$ is defined to be a functor $X: \Lambda \to \Cb$, where
$\Lambda$ denotes Connes' cyclic category. We denote by $\Cb_{\Lambda}$ the category of
cyclic objects in $\Cb$.
There is an embedding
\[
    \varsigma: \Delta \lra \Lambda
\]
which associates to a finite ordinal the cyclic ordinal corresponding to it by
cyclic closure. That is, $\varsigma([n]) = \cn$. Thus a cyclic object $X$ in $\Cb$ gives rise to a
simplicial object $\varsigma^*X$ (often also denoted $X$) together with endomorphisms
(cyclic rotations)
$t_n: X_n\to X_n$,  satisfying the well known identities \cite{connes}, Ch. III, App. A:
\[
\begin{gathered}
t_n^{n+1}=\Id,  \cr
\partial_i t_n = t_{n-1}\partial_{i-1}, \quad \text{for} \quad
1\leq i\leq n, \quad \text{while}\quad \partial_0 t_n =\partial_n,
 \cr
 s_i t_n = t_{n+1}s_{i-1} 
\quad \text{for} \quad
 1\leq i\leq n, 
 \quad \text{while}\quad
 s_0 t_n = t_{n+1}^2 s_n.  
\end{gathered}
\]
%
%
%
%
Suppose now that $\Cb$ has limits. For a simplicial set $K$ and a simplicial object $X\in\Cb_\Delta$ we denote,
following \cite{HSS1}, the {\em space of $K$-membranes} in $X$ as the object
\begin{equation}\label{eq:pointwise}
(K,X)  = \pro_{\{\Delta^p \to K\}}^{\Cb} X_p \in \Cb\text{.}
\end{equation}
Here the limit is taken over all simplices of $K$. The functor 
\[
  \Upsilon_*X: \Set_\Delta^\op \lra \Cb, \quad K\mapsto (K,X),
\]
is the right Kan extension of $X:\Delta^\op\to\Cb$ along the Yoneda embedding
$\Upsilon: \Delta^\op\to\Set_\Delta^\op$. 

Suppose now that $\Cb$ carries a model structure. Then we can define derived functors
of the projective limit of $\Cb$-valued diagrams \cite{dhks}, and will refer to them
as {\em homotopy limit} functors. We will use the notation
$\hopro_{a\in A}^{\Cb} Z_a$ for the homotopy limit of a diagram $(Z_a)_{a\in A}$. 
Similarly for the derived functors of  Yoneda extension functors,
see {\em loc. cit.}  In
particular, we define the {\em derived space of $K$-membranes} in $X$, denote
$(K,X)_R$ as 
\begin{equation}\label{eq.dpointwise}
  (K,X)_R \simeq \hopro_{\{\Delta^p \to K\}}^{\Cb} X_p \in \Cb.
\end{equation}
See  \cite[\S 5.1]{HSS1} for more details. 
We will need two particular examples of simplicial sets.

\begin{exas}
(a) 
 We denote by $I[n]\subset\Delta^n$ be the simplicial set (``subdivided interval") corresponding to the oriented graph
 \[
	\xymatrix{
		\overset{0}{\bullet} \ar[r] & \overset{1}{\bullet} \ar[r] & \dots \ar[r] &
		\overset{n}{\bullet}
	}.
\]
 (b) Let $P_{n+1}$ be the standard plane $(n+1)$-gon with the set of vertices $M=\{0,1,\cdots , n\}$,
 as in Example \ref{ex:triangles-in-polygon}. Let
 $\Tc$ be any triangulation of $P_{n+1}$. By lifting any triangle $\sigma\in\Tc$ with vertices $i,j,k$ to the
 triangle $\Delta^\sigma\subset\Delta^n$ with vertices $\{i\}, \{j\},\{k\}$, we associate to $\Tc$ a
 2-dimensional simplicial subset $\Delta^\Tc\subset \Delta^n$ homeomorphic to $P_{n+1}$. 
\end{exas}

We now recall the main definitions of \cite{HSS1}, the first one being a modification of that of Rezk \cite{rezk}. 

\begin{Defi}\label{def:segal} 
Let $\Cb$ be a combinatorial model category, and let $X\in\Cb_\Delta$ be a simplicial object.
\begin{enumerate}

	\item We say that $X$ is {\em 1-Segal} if, for every $n\geq 1$, the morphism
	\[
	f_n: X_n \lra (I[n], X)_R = X_1\times^R_{X_0} X_1 \times^R_{X_0} \cdots \times^R_{X_0} X_1,
	\]
	induced by the embedding $I[n]\hookrightarrow \Delta^n$, is a weak equivalence in $\Cb$. 

	\item We say that $X$ is {\em 2-Segal} if, for every $n\geq 2$ and every triangulation $\Tc$ of $P_{n+1}$,
	the morphism
	\[
	f_\Tc: X_n \lra (\Delta^\Tc, X)_R,
	\]
	induced by the embedding $\Delta^\Tc\hookrightarrow \Delta^n$, is a weak equivalence in $\Cb$. 

\end{enumerate}
\end{Defi}

Note the particular case when $\Cb$ is an ordinary category with trivial model structure.
In this case the conditions involve underived membrane spaces
and say that the corresponding morphisms $f_n$, resp. $f_\Tc$, are isomorphisms. 

We now recall the {\em path space criterion}, a result from \cite{HSS1} which is very useful to
establish the 2-Segal property in many cases. For ordinals $I$ and $J$ their {\em join} is the
set $I\sqcup J$ ordered so that each element of $I$ precedes each element of $J$.  For a simplicial
object $X$ its {\em initial and final path spaces} are the simplicial objects $\PI X$ and $\PF X$
induced from $X$ by pullback along the endofunctors
\[
i, f: \Delta\lra\Delta, \quad i(I) = [0]*I, \quad f(I) = I*[0]. 
\]
\begin{thm}[6.3.2 in \cite{HSS1}]\label{thm-path-crit}
A simplicial object $X$ is 2-Segal if and only if both $\PI X$ and $\PF X$
are 1-Segal.\qed
\end{thm}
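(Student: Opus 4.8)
The plan is to identify the two $1$-Segal conditions on the path spaces with the $2$-Segal conditions attached to the two families of \emph{fan} triangulations, and then to propagate from fans to arbitrary triangulations by flips. First I would record the bookkeeping. Since $(\PF X)_m = X([m]\ast[0]) = X_{m+1}$, with the extra vertex adjoined at the end, the final vertex $m+1$ of $P_{m+2}$ becomes the apex of a fan: each spine edge $\{i,i+1\}$ of $I[m]$ is sent under $f$ to the triangle $\{i,i+1,m+1\}$, glued to its neighbors along the edges $\{i,m+1\}$. Hence the $1$-Segal map of $\PF X$ at level $m$ is exactly
\[
	X_{m+1} \lra X_2 \times^R_{X_1} X_2 \times^R_{X_1} \cdots \times^R_{X_1} X_2 \qquad (m \text{ factors}),
\]
whose target is the derived membrane space $(\Delta^\Tc, X)_R$ for the fan triangulation $\Tc$ of $P_{m+2}$ based at the last vertex, and the two maps coincide. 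Symmetrically, the $1$-Segal maps of $\PI X$ are the $2$-Segal maps of $X$ for the fan triangulations based at the first vertex $0$. This identification makes the forward implication immediate: if $X$ is $2$-Segal, all triangulation maps are weak equivalences, in particular the fan maps, so $\PI X$ and $\PF X$ are $1$-Segal.

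For the converse the key is a \emph{flip-invariance lemma}. Writing the membrane space as the homotopy limit $(\Delta^\Tc, X)_R \simeq \hopro^{\Cb}_{\{\Delta^p \to \Delta^\Tc\}} X_p$ over the simplices of $\Delta^\Tc$, each triangle of $\Tc$ contributes a factor $X_2$ and each internal diagonal a factor $X_1$. Suppose $\Tc$ and $\Tc'$ differ by one flip in the sub-quadrilateral with corners $a<b<c<d$, so that $\Tc$ uses $\{a,c\}$ and $\Tc'$ uses $\{b,d\}$, all other triangles being shared. The two triangulations of this quadrilateral are precisely the two triangulated hemispheres of the boundary of the $3$-simplex $\Delta^{\{a,b,c,d\}} \cong \Delta^3$, whose own membrane space is $X_3$. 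Let $\Tc^{+}$ be obtained from $\Tc$ by filling the quadrilateral with the solid $\Delta^{\{a,b,c,d\}}$. Then $\Delta^{\Tc^{+}}$ is the homotopy pushout of $\Delta^\Tc$ and $\Delta^{\{a,b,c,d\}}$ along the shared hemisphere, and since $K \mapsto (K,X)_R$ sends homotopy pushouts of simplicial sets to homotopy pullbacks, the induced map $(\Delta^{\Tc^{+}}, X)_R \to (\Delta^{\Tc}, X)_R$ is the base change, along the gluing, of the standard $P_4$ map $X_3 \to X_2 \times^R_{X_1} X_2$. The same holds for $\Tc'$ via the other hemisphere.

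Now the converse assembles as follows. A sub-quadrilateral $\{a,b,c,d\}$ of $P_{n+1}$ gives, under the order isomorphism $\Delta^{\{a,b,c,d\}}\cong\Delta^3$, literally the standard $P_4$ condition, and both of its triangulations are among our hypotheses: the first-vertex fan supplies the diagonal $\{0,2\}$ of $P_4$ (from $\PI X$) and the last-vertex fan supplies $\{1,3\}$ (from $\PF X$). Thus both projections out of $(\Delta^{\Tc^{+}}, X)_R$ are weak equivalences. Since $\Delta^{\Tc^{+}}\subseteq\Delta^n$, the two $2$-Segal maps $X_n \to (\Delta^\Tc, X)_R$ and $X_n \to (\Delta^{\Tc'}, X)_R$ both factor through $X_n = (\Delta^n, X)_R \to (\Delta^{\Tc^{+}}, X)_R$, so by two-out-of-three one is a weak equivalence iff the other is. As any two triangulations of $P_{n+1}$ are connected by a finite sequence of flips, and the first-vertex fan already satisfies the $2$-Segal condition by the $\PI X$ hypothesis, flip-invariance propagates the weak equivalence to every triangulation, giving the full $2$-Segal property.

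The hard part will be the flip-invariance lemma, specifically the descent statement that $K \mapsto (K,X)_R$ carries the gluing $\Delta^{\Tc^{+}} = \Delta^{\Tc} \cup \Delta^{\{a,b,c,d\}}$ into a homotopy pullback, so that the inclusion-induced map on membrane spaces is computed \emph{factorwise} as the local $P_4$ map while remaining the identity on all unchanged triangles. This is a cofinality/descent argument for the homotopy right Kan extension $\Upsilon_{\ast}X$ of \eqref{eq.dpointwise}, and it must be combined with the precise index bookkeeping that matches the path-space face maps with the fan gluing maps of the first paragraph. Once these are in place, the flip-connectivity of triangulations and the two-out-of-three bookkeeping are purely formal.
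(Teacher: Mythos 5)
Your proposal is essentially correct, but note that this paper contains no proof of the statement to compare against: the theorem is imported verbatim from \cite{HSS1} (Theorem 6.3.2 there), which is why it is stated with a terminal \qed. Measured against the proof in the cited source, your argument takes a genuinely different route. The identification of the path-space $1$-Segal maps with the fan $2$-Segal maps (your first paragraph) is common to both, and the forward implication is immediate either way. For the converse, however, \cite{HSS1} argues by induction on $n$, with no flips: given a triangulation $\Tc$ of $P_{n+1}$, one decomposes $(\Delta^{\Tc},X)_R$ along the polygonal subdivision determined by a triangle on the boundary edge, applies the inductive hypothesis to the two smaller polygons, and reduces to subdivision maps along single diagonals through the initial or final vertex, which are exactly what the fan conditions control. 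Your proof instead isolates a flip-invariance lemma --- glue the solid $3$-simplex $\Delta^{\{a,b,c,d\}}$ onto $\Delta^{\Tc}$, observe that both projections out of $(\Delta^{\Tc^{+}},X)_R$ are homotopy base changes of the two standard $P_4$ maps, which are equivalences by the level-$2$ path-space hypotheses --- and then invokes connectivity of the flip graph on triangulations of a convex polygon, starting from the vertex-$0$ fan. Both arguments rest on the same technical pillar, which you correctly flag as the remaining work: the derived membrane functor $K \mapsto (K,X)_R$ takes pushouts of simplicial sets along monomorphisms to homotopy pullbacks. This is not a genuine gap: in the framework of \cite{HSS1} it follows by replacing $X$ with an injectively fibrant model, for which the underived functor sends monomorphisms to fibrations and pushouts to pullbacks. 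As for what each approach buys: your route makes the flip the elementary move (matching the octahedral picture in the introduction of the present paper) and in fact uses visibly weaker hypotheses --- the $\PI X$ conditions at all levels but the $\PF X$ condition only at level $2$ --- so it proves a slightly sharper criterion; the inductive subdivision argument of \cite{HSS1} avoids any appeal to the combinatorial fact that the flip graph is connected, and it yields along the way the general decomposition of membrane spaces over arbitrary polygonal subdivisions, which is reused elsewhere in that theory (and, implicitly, in the proof of Theorem \ref{thm-2-Segal-TFT} here).
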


A cyclic object  $X$ in $\Cb$ will be called $p$-Segal, if $\varsigma^*X$ is a $p$-Segal simplicial object. 
We will say that a covariant functor $\Delta\to\Cb$ or $\Lambda\to\Cb$ is $p$-{\em coSegal,}
if the corresponding contravariant functor with values in $\Cb^\op$ is $p$-Segal.

\subsection{Examples of cyclic 2-Segal objects}

\begin{ex}[(The cyclic nerve and the $\L$-nerve)]
For a small  category $\C $ its {\em  cyclic nerve} $\NC(\C)$, is the cyclic set defined by
\[
\NC_n(\C) = \Fun(\Q^n, \C)
\]
where $\Q^n$ denotes the underlying category of the $\L$-category from Example \ref{exa:circular}.
In other words, $\NC_n(\C)$ is the set of cyclic chains of morphisms
\[
x_0 \to x_1 \to x_2 \to \cdots \to x_n \to x_0
\]  
in $\C$. 
Similarly, for a small $\L$-category $\C=(\C,w)$ its {\em $\L$-nerve} $\N^\L(\C)$, is the cyclic set
defined by
\[
\N^\L_n(\C) = \Fun_\L(\Q^n, \C). 
\]
In other words, $\N^\L_n(\C)$ is the set of cyclic chains of morphisms as above, which form a
factorization of $w$, i.e., each composition $x_i \to\cdots\to x_i$ around the circle is equal to
$w_{x_i}$.   
\end{ex}
\begin{prop} (a) For any small category $\C$ the cyclic set $\NC \C$ is unital 2-Segal. 

(b) For any small $\L$-category $\C$
the cyclic set $\N^\L\C$ is unital 2-Segal. 
\end{prop}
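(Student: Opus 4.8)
The plan is to deduce both parts from the path space criterion (Theorem~\ref{thm-path-crit}). Since $\NC(\C)$ and $\N^\L(\C)$ are discrete (set-valued) cyclic objects, it suffices to show, for each, that the initial and final path spaces $\PI$ and $\PF$ of the underlying simplicial set $\varsigma^*X$ are $1$-Segal, and then to verify unitality separately. The underlying observation is a bookkeeping one: the face maps defining $\PF$ are $d_0,\dots,d_n$, so that $\PF$ forgets the top face $d_{n+1}$ and the \emph{last} vertex of a cyclic chain is never deleted; dually, $\PI$ forgets $d_0$ and retains the \emph{first} vertex. Thus in a path space the surviving faces only compose the interior morphisms of a cyclic chain, and the cyclic chains effectively unroll into linear ones.

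First I would treat $\NC(\C)$. Writing $(\PF\,\NC(\C))_m = \NC_{m+1}(\C)$, I claim this simplicial set is the nerve of the category $\Bc_\C$ whose objects are diagrams $x \xrightarrow{a} c \xrightarrow{b} x$ in $\C$ and whose morphisms $(x,c,a,b)\to(x',c,a',b')$, \emph{with the vertex $c$ fixed}, are arrows $\phi:x\to x'$ with $a=a'\phi$ and $b'=\phi b$. Under the identification above, the faces $d_0$ and $d_1$ of an element of $\NC_2(\C)$ recover the source and target of such a $\phi$, composition matches the composite of cyclic chains, and one checks that a cyclic chain in $\NC_{m+1}(\C)$ is reconstructed uniquely from its spine of $\Bc_\C$-morphisms. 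Hence the Segal map $\NC_{m+1}(\C)\to \NC_2(\C)\times_{\NC_1(\C)}\cdots\times_{\NC_1(\C)}\NC_2(\C)$ is a bijection. Since a simplicial set is $1$-Segal exactly when it is the nerve of a category, $\PF\,\NC(\C)$ is $1$-Segal; the case of $\PI\,\NC(\C)$ is symmetric (retaining the first vertex), proving the $2$-Segal property in (a).

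For (b) the cyclic chains carry the extra constraint that the full loop at each vertex equals $w$. The one compatibility that must be checked is that the surviving face maps preserve this constraint: replacing two consecutive arrows by their composite, and in particular the wraparound face $d_0$, which composes the closing arrow of a cyclic chain with its first arrow, carries a $w$-factorization to a $w$-factorization; this follows from associativity of composition together with the naturality of $w$. Granting this, $\PF\,\N^\L(\C)$ and $\PI\,\N^\L(\C)$ are the nerves of the categories obtained from $\Bc_\C$ by requiring the object diagrams $x\xrightarrow{a}c\xrightarrow{b}x$ to be loop factorizations, i.e.\ $ba=w_x$ and $ab=w_c$; these are again nerves, hence $1$-Segal, which gives the $2$-Segal property in (b).

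It remains to establish unitality, i.e.\ that the squares involving the degeneracy maps $s_i$ are pullbacks; in the discrete setting this is a direct bijection check, and it reduces to the existence and unit axioms of the identity morphisms $\id_x$ in $\C$ (which are automatically compatible with $w$ in the $\L$-category case). I expect the main obstacle to be precisely the bookkeeping separating the wraparound face $d_0$ from the interior faces, together with the verification in (b) that $d_0$ respects the factorization-of-$w$ condition; once the path spaces have been correctly identified as nerves, the $1$-Segal and unitality conclusions are formal.
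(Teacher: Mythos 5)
Your proof is correct, but it takes a genuinely different route from the paper's. The paper gives no argument of its own: part (a) is delegated to \cite[Th.~3.2.3]{HSS1}, where the statement is proved by directly verifying the polygonal gluing condition -- for a subdivision of the polygon along a diagonal, two cyclic chains on the sub-polygons agreeing on the diagonal assemble uniquely into a cyclic chain on the whole polygon, with unitality checked in the same computation -- and part (b) is dismissed as ``proved similarly''. You instead deduce both parts from the path space criterion (Theorem~\ref{thm-path-crit}). Your key identifications do hold: $\PF\,\NC(\C)$ is the nerve of your category $\Bc_\C$ (for a $1$-simplex $x_0\xrightarrow{a_0}x_1\xrightarrow{a_1}x_2\xrightarrow{a_2}x_0$ the two surviving faces are the objects $(x_0,\,a_1a_0,\,a_2)$ and $(x_1,\,a_1,\,a_0a_2)$, related by $\phi=a_0$ exactly as in your relations $a=a'\phi$, $b'=\phi b$), and in (b) the loop factorizations span a \emph{full} subcategory of $\Bc_\C$, since for any $\Bc_\C$-morphism $\phi$ between loop-factorization objects the three cyclic constraints come for free: $ba'\phi=ba=w_x$, $\phi b a'=b'a'=w_{x'}$, $a'\phi b=ab=w_c$. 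The trade-off between the two approaches: yours is self-contained relative to the tools this paper already quotes and makes the ``similarly'' of part (b) explicit, which is a real gain; the direct gluing argument behind the citation is more elementary, avoids the path-space machinery, and delivers unitality simultaneously, whereas Theorem~\ref{thm-path-crit} concerns only the $2$-Segal property, so that -- as you correctly note -- unitality requires a separate verification. That last step you only sketch, but the reduction you indicate (a degenerate edge forces an identity arrow, and inserting identities neither changes loop composites nor disturbs the $w$-conditions) is the correct, routine one.
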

\begin{proof}
The first statement is proved in \cite[Th. 3.2.3]{HSS1}, the second statement is proved similarly.
\end{proof}
  
%
%
%
%

We can now state the main result of this section. 
Consider the cocyclic object $\E^\bullet$ in $\dgcatt$ from Proposition \ref{prop:E-n-cocyclic}.
We equip $\dgcatt$ with the Morita model structure introduced in
\S \ref{subsec:morita}, so that weak equivalences are Morita equivalences.
We define a cyclic object $\E_{\bullet}$ which is defined by the formula
\[
	\E_{\bullet}: \Lambda^{\op} \to \dgcatt, \; \cn \mapsto (\E^{\cn^*})^{\op}
\]
where $\cn \mapsto \cn^*$ denotes the cyclic duality from \S \ref{subsec:cyc-morita}. Note that, by
the compatibility of cyclic and Morita duality established in Corollary \ref{cor:cyc-mor}, we have a
levelwise Morita equivalence 
\begin{equation}\label{eq:duale}
	\E_{\bullet} \simeq \IRHom(\E^{\bullet}, \Perft_{\k})
\end{equation}
of cyclic objects in $\dgcatt$.

\begin{thm}\label{thm:E-cosegal} The following statements hold:
	\begin{enumerate}
		\item[(a)] The cocyclic object $\E^\bullet$ in $(\dgcatt, \Mor)$ is $2$-coSegal.
		\item[(b)] The cyclic object $\E_{\bullet}$ in $(\dgcatt, \Mor)$ is $2$-Segal.
	\end{enumerate}
\end{thm}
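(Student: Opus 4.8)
The plan is to establish (a) directly and to deduce (b) from it by Morita duality. Unwinding the definitions, (a) asserts that for every $n\ge 2$ and every triangulation $\Tc$ of $P_{n+1}$ the canonical comparison map
\[
	\hoind^{\dgcatt}_{\{\Delta^p\to\Delta^\Tc\}}\E^p \lra \E^n
\]
is a Morita equivalence; equivalently, the contravariant functor $\Lambda^{\op}\to(\dgcatt,\Mor)^{\op}$ attached to $\E^\bullet$ is $2$-Segal. The opposite $(\dgcatt,\Mor)^{\op}$ is again a combinatorial model category, so the first step is to apply the path space criterion (Theorem \ref{thm-path-crit}) to this object. Dualizing, $\E^\bullet$ is $2$-coSegal if and only if both co-path spaces $\PI\E^\bullet$ and $\PF\E^\bullet$ are $1$-coSegal. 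Since $(\PI\E)^m=\E^{m+1}=(\PF\E)^m$, this reduces the entire statement to showing that the spine maps
\[
	\underbrace{\E^2\sqcup^{h}_{\E^1}\E^2\sqcup^{h}_{\E^1}\cdots\sqcup^{h}_{\E^1}\E^2}_{m}\lra \E^{m+1}
\]
(and their $\PF$-analogues) are Morita equivalences, where $\sqcup^{h}$ denotes homotopy pushout in $(\dgcatt,\Mor)$.

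To compute these homotopy pushouts I would pass to the quiver models furnished by Proposition \ref{prop:r-n-morita}. The Morita equivalence $r_m^{\triangleleft}:\A^m\to\E^m$ is built from the left fan $\R_m^{\triangleleft}=\{[0,1]\to\cdots\to[0,m]\}$ and is precisely the comparison adapted to the initial path space, while $r_m^{\triangleright}$, built from $\R_m^{\triangleright}$, is adapted to the final one. Under $r^{\triangleleft}$ the two spine coface maps $\E^1\to\E^2$ correspond to the inclusions of $\A^1$ (a single vertex) into $\A^2$ (the quiver $\bullet\to\bullet$) as source and as target. These inclusions are cofibrations in $\dgcatt$ --- they are obtained by freely adjoining an object and then an arrow --- and every $\A^m$ is cofibrant; hence strict pushout computes homotopy pushout, and gluing target to source yields $\A^2\sqcup_{\A^1}\A^2\cong\A^3$. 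Iterating, the $m$-fold spine pushout is canonically $\A^{m+1}$, and under the identification the spine map becomes $r_{m+1}^{\triangleleft}:\A^{m+1}\to\E^{m+1}$, a Morita equivalence. Running the same argument with $r^{\triangleright}$ settles $\PF\E^\bullet$, and the two together prove (a).

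The main obstacle is the compatibility statement invoked above: that the path-space coface maps of $\E^\bullet$ correspond, under the levelwise Morita equivalences $r^{\triangleleft}$ and $r^{\triangleright}$, to the quiver inclusions $\A^m\hookrightarrow\A^{m+1}$, so that the spine diagram of $\PI\E^\bullet$ is levelwise weakly equivalent to the quiver spine $\A^2\leftarrow\A^1\to\A^2\leftarrow\cdots$ (homotopy colimits being invariant under such levelwise equivalences). Verifying this requires tracking the cocyclic coface maps of $\E^\bullet=\LF(\Q^\bullet)$ --- which are $\LF$ applied to the maps of circular quivers $\Q^m\to\Q^{m+1}$ in $\LCat$ --- through Proposition \ref{prop:r-n-morita}, and checking that they restrict to the fan diagrams $\R_\bullet^{\triangleleft}$, $\R_\bullet^{\triangleright}$. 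Conceptually this is forced by the fact that $\Fun_\L(\Q^\bullet,\C)=\N^\L(\C)$ is a unital $2$-Segal cyclic set for every $\L$-category $\C$, so that the diagrams $\Q^\bullet$ already satisfy the requisite spine colimit identities strictly in $\LCat$; the substantive point contributed by the quiver model is that $\LF$ turns these fan colimits into homotopy colimits, which is exactly what cofibrancy in the $\A^\bullet$-model guarantees.

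Finally, (b) follows from (a) by duality. By \eqref{eq:duale} there is a levelwise Morita equivalence $\E_\bullet\simeq\IRHom(\E^\bullet,\Perft_\k)=(\E^\bullet)^\vee$, and by Corollary \ref{cor:cyc-mor} the Morita duality $(-)^\vee$ is compatible with the cyclic duality, so that $\E_\bullet$ really is the dual of the cocyclic object $\E^\bullet$. As a contravariant internal-Hom functor, $(-)^\vee$ carries homotopy colimits to homotopy limits and preserves Morita equivalences; applying it to the $2$-coSegal comparison maps of (a) converts each equivalence $\hoind_{\{\Delta^p\to\Delta^\Tc\}}\E^p\to\E^n$ into the corresponding equivalence $\E_n\to\hopro_{\{\Delta^p\to\Delta^\Tc\}}\E_p$, which is precisely the $2$-Segal condition for $\E_\bullet$. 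This completes the deduction of (b).
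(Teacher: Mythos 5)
Your proposal is correct and follows essentially the same route as the paper's own proof: reduction via the path space criterion (Theorem \ref{thm-path-crit}) to the $1$-coSegal property of $\PI\E^\bullet$ and $\PF\E^\bullet$, transfer along the levelwise Morita equivalences $r^\triangleleft_{\bullet+1}$, $r^\triangleright_{\bullet+1}$ to the quiver model $\A^{\bullet+1}$ (whose path-space cofaces preserve the fans precisely because they fix the vertex $0$, resp.\ the last vertex), replacement of the spine homotopy pushouts by strict pushouts using cofibrancy, and deduction of (b) from (a) via \eqref{eq:duale} and the fact that $\IRHom(-,\Perft_\k)$ turns homotopy colimits into homotopy limits. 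The only inaccuracy is your passing remark that $(\dgcatt,\Mor)^{\op}$ is again a combinatorial model category (opposites of locally presentable categories are essentially never locally presentable), but nothing in your argument actually uses this, so it is harmless.
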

\begin{proof} From the adjunction \eqref{eq:simplicial-adjunction}, we deduce that $\IRHom(-,-)$
	maps homotopy colimits in the first variable to homotopy limits. Therefore, in light of
	\eqref{eq:duale}, (b) follows immediately from (a).
	To show (a), we use Theorem \ref{thm-path-crit} to reduce to the statement that the cosimplicial objects $\PI(\E^\bullet), \PF(\E^\bullet)$
	are $1$-coSegal. We consider the case of $\PI(\E^\bullet)$. By definition, we have
	\[
	(\PI(\E^\bullet))^n = \E^{n+1}.
	\]
	The key point of the argument is now that the Morita equivalences
	\[
		r_{n+1}^\triangleleft: \A^{n+1} \overset{\simeq}{\lra} \E^{n+1}
	\]
	from Proposition \ref{prop:r-n-morita} assemble to give a weak equivalence of cosimplicial objects
	\[
		\A^{\bullet+1} \overset{\simeq}{\lra} \PI(\E^\bullet)
	\]
	in $(\dgcatt, \Mor)$ where the cosimplicial structure of $\A^{\bullet + 1}$ is obtained in an
	obvious way with coface maps given by composing morphisms and codegeneracies by filling in
	identity morphisms. Therefore, it suffices to show that the cosimplicial object
	$\A^{\bullet+1}$ is $1$-Segal. 
	Since homotopy fiber products in $(\dgcatt)^\op$ translate to homotopy pushouts in $\dgcatt$
	this amounts to verifying that, for every $n \ge 1$, the $1$-coSegal map
	\[
	\hoind \bigl\{ \A^2\leftarrow \A^1 \to \A^2\leftarrow\A^1\to \cdots \leftarrow\A^1\to\A^2\bigr\} 
	\lra \A^n
	\]
	is a Morita equivalence. Since the maps $\A^1 \to \A^2$
	appearing in the homotopy colimit are cofibrations in $(\dgcatt, \Qeq)$ (and hence in
	$(\dgcatt, \Mor)$), we may replace the homotopy colimit by an ordinary colimit. The
	resulting statement is clearly true.
\end{proof}

\subsection{Background on triangulated surfaces and ribbon graphs}\label{subsec:back-triang-surf}
Here we collect some well-known material on surfaces and their triangulations.
More details can be found in \cite{fock-goncharov, fomin-shapiro-thurston} and references therein. 
   
\subsubsection{Marked oriented surfaces}
By a {\em surface} we mean a compact, connected, oriented $2$-dimensional smooth manifold $S$
with boundary, denoted $\partial S$. We denote by $T^2, S^2$ and $D^2$ the 2-dimensional torus,
sphere, and disk, respectively.
   
\begin{Defi} A {\em stable marked surface} is a pair $(S,M)$ where $S$ is a surface and $M\subset
	S$ is a nonempty finite subset of points such that:
\begin{enumerate}
	\item Each component of $\partial S$ contains at least one point from $M$.

	\item The following {\em unstable cases} are excluded:
		\begin{enumerate}
			\item $S$ is diffeomorphic to  $S^2$, $|M|\leq 2$,
			\item $S$ is diffeomorphic to $D^2$, $|M| =  1$, or $|M|=2$ and
			$M\subset\partial S$.
		\end{enumerate}
\end{enumerate}
\end{Defi}

In the sequel all marked surfaces will be assumed stable, unless
indicated otherwise. 
For a marked surface $(S,M)$ we have the groups
\[
\on{Diffeo}^+(S,M), \quad   \mathfrak{G}(S, M)\,=\,\pi_0  \on{Diffeo}^+(S,M)
\]
of orientation preserving diffeomorphisms $S\to S$ preserving $M$ as a set,
and of isotopy classes of such diffeomorphisms. The group $ \mathfrak{G}(S, M)$
is known as the {\em mapping class group} of $(S,M)$.

\begin{rem}\label{rem:real-blow-up}
It is often convenient to view interior marked points $x\in M-\partial S$
as {\em punctures}, by removing them to form the noncompact surface $S-(M-\partial S)$. 

Further, when representing surfaces by Ribbon graphs (see \S \ref{subsub:rib}), it will be
convenient to transform $(S,M)$ into a new compact surface where
\begin{enumerate}
	\item the interior marked points become closed boundary components,
	\item the marked points on $\partial S$ become closed intervals on the boundary.
\end{enumerate}
The construction which naturally performs the modifications (1) and (2) is called the {\em real
blowup} $S_M$ of $S$ along $M$ (\cite[\S 2.1]{andre}), obtained by adding the set of inward tangent directions
at each $x\in M$. It is further convenient, to form a noncompact surface by removing the open
boundary intervals in the complement of the blown up marked boundary points creating {\em open ends}
of the surface. For example, Figure \ref{fig:rb} displays the marked surface given by a disk with
one interior and one boundary marked point, as well as its real blowup which is an annulus with an open end
on one of its boundary components.

\begin{figure}[ht]
\centering
\begin{tikzpicture}[baseline=-10ex,>=latex,scale=0.5]
\begin{scope}

	\fill[fill=green!20!white] (0:2cm) arc (0:360:2cm);
	\draw[thick] (0:2cm) arc (0:360:2cm);
	\fill (0:0cm) circle [radius=0.2cm];
	\fill (0:2cm) circle [radius=0.2cm];

	\node at (0:5cm) {$\Rightarrow$};
\end{scope}
\begin{scope}[decoration={
    markings,
    mark=at position 0.5 with {\arrow{>}}},xshift=10cm,
    ] 

\fill[fill=green!20!white] (0:1cm) -- (0:2cm)
	arc (0:360:2cm) -- (0:1cm)
	arc (360:0:1cm) -- cycle;

\fill[fill=green!20!white] (160:2cm) .. controls (165:2cm) and (175:2cm) .. (170:2.5cm)
		-- (-170:2.5cm) .. controls (-175:2cm) and (-165:2cm) .. (-160:2cm)
		-- cycle;

	\draw[thick] (0:1cm) arc (0:360:1cm); 
	\draw[thick] (0:2cm) arc (0:160:2cm);
	\draw[thick] (0:2cm) arc (0:-160:2cm);
	\draw[thick, dotted] (170:2.5cm) -- (-170:2.5cm);
	\draw[thick] (160:2cm) .. controls (165:2cm) and (175:2cm) .. (170:2.5cm)
		(-170:2.5cm) .. controls (-175:2cm) and (-165:2cm) .. (-160:2cm);

\end{scope}
\end{tikzpicture}
\caption{Disk with two marked points and corresponding real blowup}\label{fig:rb}
\end{figure}
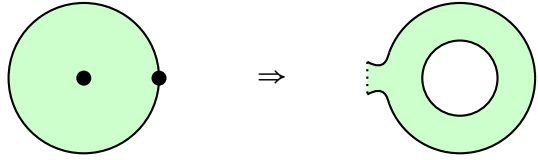
  
\end{rem}
     
\begin{ex}\label{ex:S,M-polygon}
As a simple but important case, our definition of a marked surface incudes $(P_{n+1}, M)$ where
$P_{n+1}, n\geq 2$, is a convex $(n+1)$-gon in the plane, and $M$ is its set of vertices. Via a
homeomorphism with the closed disk, this is a smooth manifold with boundary. 
We have $\mathfrak{G}(P_{n+1}, M) = \ZZ/(n+1)$. 
\end{ex}

\begin{Defi}\label{def:simple-arcs}
A {\em simple curve} on a marked surface $(S, M)$ is a continuous map $\gamma:[0,1]\to M$ with the
following properties:
\begin{enumerate}
\item The endpoints $\gamma(0),\gamma(1)$ lie in $M$. They can coincide.

\item Except for possible coincidence of the endpoints, $\gamma$ does not intersect itself,
nor $M$.  

\item If the endpoints coincide, $\gamma(0)=\gamma(1)=x$, then $\gamma$ gives a nontrivial element of
the fundamental group $\pi_1(S-M\cup\{x\}, x)$. 
\end{enumerate}
A {\em (simple) arc} on $(S, M)$ is an equivalence class of simple curves under isotopies and
reversal of parametrization. We denote by $\AAA(S,M)$ the set of arcs.
An {\em oriented arc} on $(S, M)$ is an equivalence class of simple curves under isotopies.
We denote by $\Delta(S,M)$ the set  of oriented arcs. 
\end{Defi} 
    
This  definition differs from \cite[Def. 2.2]{fomin-shapiro-thurston} in that we allow, as arcs, segments joining adjacent marked 
points on the same boundary component. This does not affect the validity of the results we need, while allowing
for a more suggestive interpretation of $\Delta(S,M)$, as the following examples show. 

\begin{exas} (a) In the situation of Example \ref{ex:S,M-polygon}, 
$\Delta(P_{n+1},M)$ is identified with the root system of type $A_n$. 

\vskip .2cm

(b) Let $S=T^2$ be a torus and $M$ consist of one point, denoted 0. 
An oriented  arc $\gamma$ has a homology class $[\gamma]\in H_2(T^2, \ZZ)=\ZZ^2$
which must be a primitive vector of the lattice $\ZZ^2$.  
Thus $\Delta(S,M)$ is identified
with the set of primitive vectors in $\ZZ^2$, and $\AAA(S,M)$ is identified with    $\PP^1(\QQ)$. 
\end{exas}
     
\subsubsection{Triangulations as systems of arcs}\label{par:triangulations-arcs}
Two arcs are called {\em compatible} if there are simple curves representing them which do not
intersect in $S-M$. It is known \cite[Prop. 2.5]{fomin-shapiro-thurston} that any collection of
pairwise compatible arcs can be represented by a collection of simple curves which pairwise do not
intersect in $S-M$. 
An {\em ideal triangulation} of $(S,M)$ is  defined as a maximal collection of pairwise
compatible arcs. 
    
Pairwise non-intersecting curves from a maximal collection cut $S$ into ``ideal triangles with
vertices in $M$",  which are regions $\sigma$ diffeomorphic to the interior of the  standard
plane triangle $P_3$. Each such $\sigma$ comes with a canonical 3-element set
$\on{Vert}(\sigma)$ of ``intrinsic vertices" (or ``corners") which is equipped with a cyclic
order via the orientation of $S$.  Note that different elements of $\on{Vert}(\sigma)$ may
correspond to the same element of $M$, i.e., the vertices (and even edges) of a triangle can
become identified, see Fig.\ref{fig:vert-sigma1}.
Similarly, each arc $a$ comes with a 2-element set $\on{Vert}(a)$ of intrinsic vertices
(``half-edges") which can become identified in $S$, if $a$ is a loop. 
  
\begin{figure}\label{fig:vert-sigma1}
\begin{tikzpicture}[scale=0.5, baseline=(current  bounding  box.center)]] 

\node (A) at (0,-3){};

\node (B) at (-5,0){};

\node (C) at (0,3){}; 

\fill (A) circle (0.1);

\fill (B) circle (0.1);

\fill (C) circle (0.1);

\draw[line width=.1mm] (0,-3) -- (-5,0) -- (0,3) -- (0,-3); 

\node at (0.8,-3){$A$};
\node at (0.8,3){$C$};
\node at (-5,.8){$B$};

\node at (-2,0){$\sigma$};

\node (B1) at (10,0){};
\fill (B1) circle (0.1);

 \node (C1) at (7,2){};
\fill (C1) circle (0.1);

 \node (A1) at (7,-2){};
\fill (A1) circle (0.1);

\draw[line width=.1mm] (7,2) .. controls (14,2) and (14,-2) .. (7,-2);

\draw[line width=.1mm] (7,2) -- (10,0) -- (7,-2); 

\node at (6.5,2.5){$C$};
\node at (6.5,-2.5){$A$};
\node at (10.7,0){$B$}; 
\node at (10,1){$\sigma$}; 

\node (B2) at (20,0){}; 
\fill (B2) circle (0.1);
\draw (B2) circle (3.5);

\node(A2) at (16.5,0){};
\fill (A2) circle (0.1); 
\draw (16.5,0) -- (20,0); 

\node at (20.5,0.5){$B$}; 

\node at (17.1 , 0.6){$C$}; 
\node at (17.1 , -0.6){$A$}; 

\node at (21, -2){$\sigma$}; 


\end{tikzpicture}
\caption{$\on{Vert}(\sigma)=\{A,B,C\}$ in all cases.}\label{fig:vert-sigma}
\end{figure}
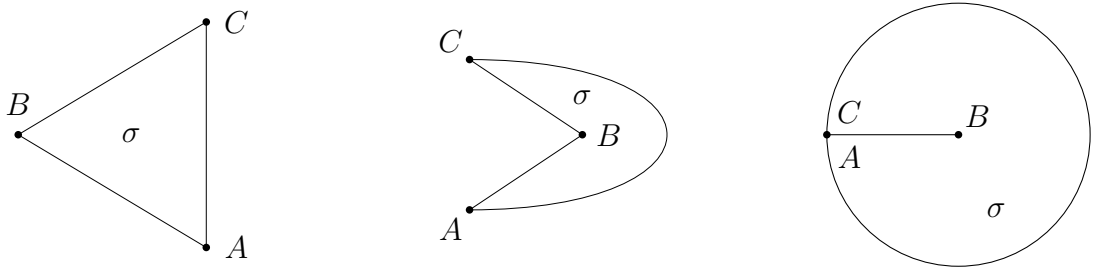

\subsubsection{Triangulations, tesselations, and spanning graphs}

An ideal triangulation $\Tc$ can be encoded by its {\em dual graph} $\Gamma_\Tc$ obtained by putting
one vertex $v_\sigma$ inside each triangle $\sigma$ and joining the $v_\sigma$ by edges
corresponding to common edges of the triangles, see \eqref{eq:dual graph}. As this construction
allows a uniform treatment of all degenerate cases, we recall precise definitions.

\begin{Defi} 
\begin{enumerate}
\item[(a)] A {\em graph}  $\Gamma$ is a  finite, 1-dimensional CW-complex without isolated points.
	For a vertex $v\in\on{Vert}(\Gamma)$ we denote $\on{Ed}(v)$ the set of germs of edges at $v$
	(a loop beginning and ending at $v$ gives rise to two germs of edges at $v$). The
	cardinality of $\on{Ed}(v)$ is called the {\em valency} of $v$. The set of 1-valent vertices
	is denoted by $\partial \Gamma$.  A graph is called {\em 3-valent} if all vertices have
	valence 1 or 3. 

\item[(b)]  Let $(S,M)$ be a   marked surface.  A {\em spanning graph} for $(S,M)$ is an embedded
	graph $\Gamma\subset S-M$ such that  $\partial\Gamma\subset \partial S$ and both maps 
	\[
	\Gamma\lra S-M, \quad \partial\Gamma\lra \partial S-M 
	\] 
	are homotopy equivalences. 

\end{enumerate}
\end{Defi}

\begin{prop}\label{prop:triangulations-graphs} Let $(S,M)$ be a marked surface. 
Forming the dual graph defines a bijection $\Tc\mapsto\Gamma_\Tc$ between ideal triangulations of
$(S,M)$ and isotopy classes of 3-valent spanning graphs for $(S,M)$.  \qed
\end{prop}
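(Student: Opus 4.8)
The plan is to construct an explicit inverse to $\Tc \mapsto \Gamma_\Tc$ and to verify that the two composites are the identity up to isotopy. Throughout it is convenient to pass to the real blowup $S_M$ of Remark \ref{rem:real-blow-up}, so that $S - M$ is replaced by a compact surface with boundary on whose open ends the univalent vertices of a spanning graph are required to sit; this makes the homotopy-equivalence conditions easy to read off from a deformation retraction.

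First I would check that $\Tc \mapsto \Gamma_\Tc$ is well defined with values in $3$-valent spanning graphs. Each interior vertex $v_\sigma$ carries exactly three germs of edges, one transverse to each of the three sides of the ideal triangle $\sigma$, so $v_\sigma$ is trivalent; the dual edge attached to a boundary arc terminates in a univalent vertex placed on the corresponding open end. To see that both maps in the definition of a spanning graph are homotopy equivalences, I would exhibit an explicit deformation retraction: each ideal triangle $\sigma$, with its three corners removed, deformation retracts onto the local spine consisting of $v_\sigma$ together with the three half-edges reaching the midpoints of its sides, and this retraction is insensitive to the degenerate identifications of vertices and edges depicted in Figure \ref{fig:vert-sigma}. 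These local retractions agree along the arcs of $\Tc$ and glue to a deformation retraction of $S - M$ onto $\Gamma_\Tc$; restricting to the boundary gives a deformation retraction of $\partial S - M$ onto the set of univalent vertices, since exactly one such vertex lies in each component of $\partial S - M$.

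Next I would construct the inverse. Given a $3$-valent spanning graph $\Gamma$, the orientation of $S$ endows $\Gamma$ with a ribbon structure (a cyclic ordering of the germs of edges at each vertex). The spanning conditions force each complementary region of $\Gamma$ in $S$ to contain exactly one point of $M$, and I would dualize by joining, for every edge of $\Gamma$, the marked points of the two adjacent regions by an arc transverse to that edge. One checks that each complementary region is then an ideal triangle, so these arcs are pairwise compatible and maximal, i.e.\ form an ideal triangulation $\Tc_\Gamma$. The two composites are identities up to isotopy: dualizing $\Gamma_\Tc$ recovers $\Tc$ because the transverse arc dual to the dual edge of an arc $a$ is isotopic to $a$, and the dual graph of $\Tc_\Gamma$ is isotopic to $\Gamma$ because a trivalent ribbon graph is reconstructed from its dual cell decomposition. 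Injectivity of $\Tc \mapsto \Gamma_\Tc$ then follows as well, since the edges of $\Gamma_\Tc$ are in bijection with the arcs of $\Tc$ by transverse intersection and an isotopy of graphs transports the dual arcs by isotopy.

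The main obstacle is the surjectivity step, specifically verifying that every complementary region of a $3$-valent spanning graph is a genuine ideal triangle containing a single marked point. This is where the two homotopy-equivalence conditions must be used quantitatively: without them one could produce regions such as once-punctured monogons or bigons, which would not yield a triangulation. The cleanest way to rule these out is an Euler-characteristic and ribbon-boundary count, comparing the handshake relation $3V_3 + V_1 = 2E$ for $\Gamma$ with the number and type of boundary cycles of the ribbon surface of $\Gamma$, together with a careful treatment of the self-folded configurations of Figure \ref{fig:vert-sigma}, which is the source of the only genuinely nontrivial bookkeeping in the argument.
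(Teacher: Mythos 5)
Note first that the paper contains no proof of Proposition \ref{prop:triangulations-graphs} at all: it is stated with its proof omitted, as part of the ``well-known material'' of \S\ref{subsec:back-triang-surf}, with the reader referred to \cite{fock-goncharov,fomin-shapiro-thurston} (the relevant arguments can be found there and in \cite{penner-book,hatcher}). So there is no internal argument to compare yours against; what you have written is a sketch of the classical spine--triangulation duality that those references contain, and in outline it is the right argument: the tripod retraction, glued by collapsing each arc to the point where its dual edge crosses it, handles the forward direction; the dual-arc construction handles the inverse; and you correctly isolate the crux, namely that the homotopy-equivalence conditions force a $3$-valent spanning graph to be a spine whose complementary regions are once-marked disks. (One slip of language: after dualizing, it is the complementary regions of the arc system $\Tc_\Gamma$ --- one per trivalent vertex of $\Gamma$ --- that are ideal triangles; the regions of $\Gamma$ itself are the once-marked disks. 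Also, when the two regions adjacent to an edge coincide, the dual arc is a loop, and condition (3) of Definition \ref{def:simple-arcs} must be checked for it.)

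The one substantive weakness is your proposed treatment of the crux. An ``Euler-characteristic and ribbon-boundary count'' cannot, by itself, certify the spine property, because every numerical invariant available a priori can agree for a non-spanning embedding: a theta graph embedded in a small disk inside the once-marked torus satisfies the handshake relation, has $\chi(\Gamma)=-1=\chi(T^2-M)$, and even has $H_1$ of the same rank as $H_1(T^2-M)$, yet two of its complementary regions are unmarked disks. What distinguishes a spanning graph is not any count but the injectivity of the induced map $\pi_1(\Gamma)\to\pi_1(S-M)$; equality of the number of boundary cycles of the ribbon surface $\Sigma_\Gamma$ with the number of ends of $S-M$ is not an input one can compare against --- it is essentially equivalent to the statement being proved. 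The standard repair is: pass to a regular neighborhood $\Sigma_\Gamma$ inside the real blowup $S_M$ of Remark \ref{rem:real-blow-up}; $\pi_1$-injectivity shows that no boundary circle of $\Sigma_\Gamma$ can bound a disk in the complement (otherwise it would be null-homotopic in $\Gamma$, forcing $\Gamma$ to be a tree and $S$ to be an unstable sphere or disk); given that, the vanishing of the total Euler characteristic of the complement, orientability, and connectedness force every complementary component to be an annulus; and a Mayer--Vietoris comparison of $H_1$-ranks rules out annuli meeting $\partial\Sigma_\Gamma$ in both boundary circles, so every annulus is a collar and $\Gamma$ is a spine. With this homotopical step substituted for pure counting --- and with the boundary-marked-point and self-folded bookkeeping you already flagged --- your plan is complete and coincides with the proof in the cited literature.
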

  
We will further call a {\em tessellation} of $(S,M)$ an isotopy class of (not necessarily 3-valent)
spanning graphs $\Gamma$ for $(S,M)$.
Each such $\Gamma$ encodes a decomposition of $S$ into curvilinear polygons with vertices in $M$,
one polygon for each vertex $v\in\Gamma$ of valence $\geq 3$.

\subsubsection{Ribbon graphs, Stasheff polytopes, and the tessellation complex} \label{subsub:rib}

A {\em ribbon graph} is a graph $\Gamma$ together with a choice of a total cyclic order on each
set $\on{Ed}(v)$, $v\in\on{Vert}(\Gamma)$. As any graph embedded into an oriented surface, a spanning
graph for $(S,M)$ has a natural ribbon structure. 
     
Conversely, a ribbon graph $\Gamma$ gives rise to an oriented surface with boundary
$\Sigma_\Gamma$   as follows.  Each vertex $v$ of $\Gamma$ corresponds to a ribbon corolla as
illustrated in Figure \ref{fig:corolla}. 
Further, each edge $e$ of $\Gamma$ corresponds to a ribbon strip as illustrated in Figure
\ref{fig:edge}. 
\begin{figure}[ht]
\begin{minipage}[b]{0.5\linewidth}
\centering
\begin{tikzpicture}[>=latex,scale=1.5]
\begin{scope}[decoration={
    markings,
    mark=at position 0.2 with {\arrow{>}},
    mark=at position 0.8 with {\arrow{>}}}
    ] 

\fill[fill=green!20!white] ({1*360/5+10}:1cm) .. controls ({(1+0.5)*360/5}:0.25cm) and ({(1+0.5)*360/5}:0.25cm) .. ({(1+1)*360/5-10}:1cm) 
-- ({(2*360)/5+10}:1cm) .. controls ({(2+0.5)*360/5}:0.25cm) and ({(2+0.5)*360/5}:0.25cm) .. ({3*360/5-10}:1cm) 
-- ({(3*360)/5+10}:1cm) .. controls ({(3+0.5)*360/5}:0.25cm) and ({(3+0.5)*360/5}:0.25cm) .. ({4*360/5-10}:1cm) 
-- ({(4*360)/5+10}:1cm) .. controls ({(4+0.5)*360/5}:0.25cm) and ({(4+0.5)*360/5}:0.25cm) .. ({5*360/5-10}:1cm) 
-- ({(5*360)/5+10}:1cm) .. controls ({(5+0.5)*360/5}:0.25cm) and ({(5+0.5)*360/5}:0.25cm) .. ({1*360/5-10}:1cm) 
--cycle;

\foreach \s in {1,...,5}
{
\draw[postaction={decorate}] ({\s*360/5+10}:1cm) .. controls ({(\s+0.5)*360/5}:0.25cm) and
({(\s+0.5)*360/5}:0.25cm) .. ({(\s+1)*360/5-10}:1cm);
}

\path (0:0cm) node (o) {$v$};
\path (72+36:0.7cm) node (b0) {$b_0$};
\path (2*72+36:0.7cm) node (b1) {$b_1$};
\path (3*72+36:0.7cm) node (b2) {$b_2$};
\path (4*72+36:0.7cm) node (b3) {$b_3$};
\path (36:0.7cm) node (b4) {$b_4$};

\draw[->, >=latex] (-90:0.3cm) arc (-90:180:0.3cm);

\end{scope}
\end{tikzpicture}
	\caption{Ribbon corolla corresponding to a vertex $v$ of $\Gamma$ of valency $5$}
	\label{fig:corolla}
\end{minipage}
\hspace{0.5cm}
\begin{minipage}[b]{0.45\linewidth}
\centering
\begin{tikzpicture}[>=latex,scale=1.5]
\begin{scope}[decoration={
    markings,
    mark=at position 0.5 with {\arrow{>}}}
    ] 

\fill[fill=green!20!white] 
({2*360/5-10}:1cm) -- ({2*360/5+10}:1cm) -- ({2*360/5+170}:1cm) -- ({2*360/5+190}:1cm) --
cycle;

\draw[postaction={decorate}] ({2*360/5+10}:1cm) -- ({2*360/5+170}:1cm);
\draw[postaction={decorate}] ({2*360/5+190}:1cm) -- ({2*360/5-10}:1cm);

\path (0:0cm) node (o) {$e$};
\path (2*72-90:.4cm) node (o) {$a_0$};
\path (2*72+90:.4cm) node (o) {$a_1$};

\end{scope}
\end{tikzpicture}
\caption{Ribbon strip corresponding to an edge $e$ of $\Gamma$}
	\label{fig:edge}
\end{minipage}
\end{figure}
The ribbon strips are then glued to the ribbon corollas according to the incidence relations provided by $\Gamma$.
As a result of this procedure, we obtain an oriented surface with boundary which we denote by
$\Sigma_{\Gamma}$. 
See, e.g., \cite{penner-book}, Ch. 1, \S 1.3 for the case of graphs without 1-valent vertices. If
$\Gamma$ does have 1-valent vertices, they give rise to open ends at the boundary of
$\Sigma_\Gamma$. If $\Gamma$ is a spanning graph for a stable marked surface $(S,M)$, then
$\Sigma_\Gamma$ is diffeomorphic to the real blowup $\widetilde S_M$ from Remark
\ref{rem:real-blow-up}. 
                
Note that for a ribbon tree $T$ we have a canonical cyclic order on $\partial T$, since in this case
the ribbon structure gives an isotopy class of embeddings into $\RR^2$.  Let $\Gamma, \Gamma'$ be
ribbon graphs. A {\em contraction} $p:\Gamma\to\Gamma'$ is a surjective cellular map such that for
any vertex $v'\in\Gamma'$ the preimage $p^{-1}(v')$ is a sub-tree in $\Gamma$, and the induced map
$\partial(p^{-1}(v'))\to \on{Ed}(v')$ is a bijection preserving the cyclic order.   
       
We denote by $K_n$ the $n$th {\em Stasheff polytope}. Thus, the vertices of $K_n$ are in bijection with the following three
canonically identified sets:
\begin{itemize}

\item[(S)] Bracketings of a product of
$n$ factors. 

\item[(S$'$)] Triangulations of the $(n+1)$-gon $P_{n+1}$.

 \item[(S$''$)] Planar 3-valent trees with $(n+1)$ ``tails" (1-valent vertices) labelled cyclically from $0$ to $n$. 
 \end{itemize}
The bijection between (S$'$) and (S$''$) is a particular case of Proposition
\ref{prop:triangulations-graphs}. 
 More generally, faces of $K_n$ of arbitrary dimension are labelled by
planar trees with the same  tails as in (S') but not necessarily 3-valent.
For instance, edges correspond
to trees with one 4-valent vertex and the rest being 3- (or 1-)valent.
The maximal face corresponds  to a ``corolla"  (tree with  one 
vertex of valency $n+1$). 
Note that both (S') and (S'') allow for the definition of $K_I$ for any
finite cyclic ordinal $I$, with $K_n$ corresponding to $I=\la n \ra$. 
We put $K_I=\pt$ for $|I|\leq 3$. 
      
For a ribbon graph $\Gamma$, we define its {\em Stasheff polytope} as
\[
	K_\Gamma = \prod_{v\in\on{Vert}(\Gamma)} K_{\on{Ed}(v)}. 
\]
A contraction $p:\Gamma\to\Gamma'$ gives rise to an embedding $k_p: K_\Gamma\hookrightarrow
K_{\Gamma'}$, which realizes $K_\Gamma$ as a face of $K_{\Gamma'}$. 
        
Let now $(S,M)$ be an arbitrary marked surface and  denote by $\Sigma(S,M)$ the set if isotopy
classes of all, not necessarily $3$-valent, spanning graphs for $(S,M)$. This set is partially
ordered by {\em degeneration}: we say that $\Gamma \leq \Gamma'$, if $\Gamma'$ can be obtained from
$\Gamma$ by collapsing some edges to points. In this case, the subgraph shrunk to each vertex of
$\Gamma'$ is a tree, so that we have a contraction $\Gamma \to \Gamma'$ and the corresponding face
embedding $K_\Gamma\hookrightarrow K_{\Gamma'}$. 
      
\begin{Defi}
The {\em tessellation complex} $K_{S,M}$ is a CW-complex glued from the cells $K_\Gamma$,
$\Gamma\in\Sigma(S,M)$, using the face identifications induced by degenerations. 
\end{Defi}
      
In particular, vertices of $K_{S,M}$ correspond to ideal triangulations of $(S,M)$, edges correspond
to ``flips" on 4-gons, as in Example \ref{ex:octahedron-flip}, and so on.  More precisely, a cell of
$K_{S,M}$, i.e., an isotopy class of  spanning graphs $\Gamma$, can be seen as encoding a {\em
tessellation} of $(S,M)$, i.e., a decomposition of $S$ into curvilinear polygons with vertices in
$M$, see \cite{penner-book}, Ch.1, Th. 1.25.
       
The mapping class $\mathfrak{G}(S,M)$ acts on $K_{S,M}$ by automorphisms. Crucial for us will be the
following result due to Harer (Theorem 1.1 and Theorem 2.1 in \cite{harer}).


\begin{prop}\label{prop:modular-contractible}
The CW-complex $K_{S,M}$ is contractible. \qed
\end{prop}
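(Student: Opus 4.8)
The plan is to deduce the contractibility of $K_{S,M}$ from that of Teichm\"uller space, by exhibiting $K_{S,M}$ as a spine of a suitable decorated Teichm\"uller space. The tool for this is Penner's decorated Teichm\"uller theory. Let $\widetilde{\mathcal T}(S,M)$ denote the space of complete finite-area hyperbolic structures on the punctured surface $S-(M-\partial S)$ (with geodesic boundary induced from $\partial S$), together with a horocyclic decoration at each point of $M$. The Epstein--Penner convex-hull construction assigns to every decorated structure a canonical ideal cell decomposition of the surface, and this produces a $\mathfrak G(S,M)$-equivariant decomposition of $\widetilde{\mathcal T}(S,M)$ into open cells $C_\Gamma$, one for each tessellation $\Gamma\in\Sigma(S,M)$, with the incidence of cells governed precisely by the degeneration relation $\Gamma\le\Gamma'$ used to glue $K_{S,M}$. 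First I would set up this dictionary carefully: the top-dimensional cells $C_\Gamma$ (where $\Gamma$ is $3$-valent) correspond to ideal triangulations, i.e.\ to the $0$-cells of $K_{S,M}$, so that $K_{S,M}$ is the complex \emph{dual} to Penner's decomposition.

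Granting the dictionary, the conclusion is immediate. The decoration is independent of the hyperbolic structure, so $\widetilde{\mathcal T}(S,M)\cong \mathcal T(S,M)\times \RR_{>0}^{M}$ is contractible, because $\mathcal T(S,M)$ is homeomorphic to an open ball. The spine dual to the cell decomposition of $\widetilde{\mathcal T}(S,M)/\RR_{>0}$ is a $\mathfrak G(S,M)$-equivariant deformation retract of $\widetilde{\mathcal T}(S,M)$ and is homotopy equivalent to $K_{S,M}$; combining these, $K_{S,M}\simeq \widetilde{\mathcal T}(S,M)\simeq \mathcal T(S,M)\simeq \pt$. The Stasheff-polytope structure $K_\Gamma=\prod_{v}K_{\on{Ed}(v)}$ is exactly what one expects to emerge on each cell: resolving an $(n+1)$-valent vertex of $\Gamma$ into trivalent pieces matches triangulating the dual $(n+1)$-gon, i.e.\ a vertex of $K_n$, and partial resolutions correspond to higher faces, so the local face lattice is a product of associahedra.

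The hard part will be the second step --- verifying that the combinatorially defined cells $K_\Gamma$, glued along degenerations, genuinely reproduce the face lattice of Penner's geometric cell decomposition. The delicate cases are the folded and self-glued ideal triangles of Figure~\ref{fig:vert-sigma}, where edges or corners of a triangle are identified in $S$; here I would pass to the real blowup $\widetilde S_M$ of Remark~\ref{rem:real-blow-up}, on which the combinatorics and the incidence structure are unambiguous, and check the matching there. This matching is precisely the content of Penner's convexity theorem for the cells $C_\Gamma$ together with Harer's analysis of the spine, and rather than reconstruct it I would ultimately cite Harer's Theorems~1.1 and~2.1 in \cite{harer} (building on \cite{penner-book}), which establish the contractibility of this complex directly; the sketch above is the conceptual reason those theorems apply to our $K_{S,M}$.
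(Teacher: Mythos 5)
Your proposal is correct and ends up in the same place as the paper: the paper offers no independent argument, stating the proposition as a quoted result of Harer (Theorems 1.1 and 2.1 of \cite{harer}), which is exactly the citation you fall back on. The decorated Teichm\"uller sketch you give (Epstein--Penner cell decomposition, $K_{S,M}$ as the dual spine, contractibility of $\widetilde{\mathcal T}(S,M)$) is a sound account of the geometry behind that citation, but it is background rather than a genuinely different proof.
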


\subsection{Cyclic membrane spaces}\label{subset:construction-TFT}

Let $\Cb$ be a combinatorial model category. Given a cyclic object $X \in \Cb_\Lambda$ we define,
for every cyclic set $D \in \Set_\Lambda$, the {\em cyclic membrane space} to be 
\[
 	\llb D,X\rlb   = \pro^\Cb_{\{\Lambda^n\to D\}} X_n, 
\]
As in the simplicial case, this construction can be expressed
in terms of the right Kan extension 
	$\Upsilon_*: \; \Cb_\Lambda \lra \Cb_{\Set_\Lambda}$
along the Yoneda embedding $\Upsilon: \Lambda \to \Set_{\Lambda}$, so that we have a natural
isomorphism
\[
 	\llb D,X\rlb \cong (\Upsilon_*X)(D).
\]
Again, we can derive these constructions, defining the {\em derived cyclic
membrane space} to be 
\[
 	\llb D,X\rlb_R  = \hopro^\Cb_{\{\Lambda^n\to D\}} X_n,
\]
obtaining the description
\[
 	\llb D,X\rlb_R \simeq (R\Upsilon_*X)(D)
\]
in terms of the right homotopy Kan extension along the Yoneda embedding. In particular, we have
$\llb D,X\rlb_R \simeq \llb D, \widetilde X\rlb$,
where $X\to\widetilde X$ is an injectively fibrant replacement of $X$.

\subsubsection{From a triangulated surface to a cyclic membrane} 

Let $(S,M)$ be a stable marked surface, and let $\Tc$ be a triangulation of $(S,M)$.  We denote by
$\Tc_1$ and $\Tc_2$ the set of arcs and triangles of $\Tc$. Recall from \S
\ref{par:triangulations-arcs} that each arc $a\in\Tc_1$ has a 2-element set $\on{Vert}(a)$ of
``intrinsic endpoints", which, as any 2-element set, can be canonically considered as a cyclic
ordinal.  Recall further that each triangle $\sigma\in\Tc_2$ has a 3-element set
$\on{Vert}(\sigma)$ of ``intrinsic vertices" which is made into a cyclic ordinal by the orientation
of $S$. 
Whenever an arc $a$ is a side of a triangle $\sigma$ (notation $a\subset\sigma$), we have an
embedding $u_{a,\sigma}: \on{Vert}(a)\to\on{Vert}(\sigma)$ which can be considered as a morphism in
the category $\bf\Lambda$. In particular, we have an embedding of the cyclic simplices
\[
	(u_{a,\sigma})_*: \Lambda^{\on{Vert}(a)} \lra \Lambda^{\on{Vert}(\sigma)}.
\]
Let $\Tc_{[1,2]}$ be the {\em incidence category} of $\Tc$, with the set of objects being
$\Tc_1\sqcup \Tc_2$, and non-identity morphisms given by  inclusions $a\subset\sigma$. 
Let $U_\Tc:\Tc_{[1,2]}\to\Set_\Lambda$ be the functor sending:
\begin{itemize}
\item an object $a$ to $\Lambda^{\on{Vert}(a)}$,
\item an object $\sigma$ to $\Lambda^{\on{Vert}(\sigma)}$,
\item a morphism $a\subset\sigma$ to the morphism $(u_{a,\sigma})_*$.
\end{itemize}
The {\em cyclic membrane}
corresponding to $\Tc$ is  defined as the colimit
\be
\Lambda^\Tc = \ind^{\Set_\Lambda} U_\Tc
\ee
mimicking the way $S$ itself is glued out of triangles $\sigma\in\Tc_2$ identified along arcs $a\in\Tc_1$. 


\begin{rem}
The geometric realization $|\Lambda^\Tc|$ of (the simplicial set corresponding to) the cyclic set
$\Lambda^\Tc$ is a 3-dimensional manifold with boundary.  As showed in \cite{dwyer-hopkins-kan}, the
realization of any cyclic set has a natural $S^1$-action. In our case $|\Lambda^\Tc|$  is an
$S^1$-bundle over $S$ which is obtained from the tangent circle bundle by performing a surgery at
each point of $M$ (in particular, if $S$ is a compact surface of genus $g$, then the degree of the
bundle is $2-2g-|M|$). Among other things, this means that $|\Lambda^\Tc|$ is independent on $\Tc$,
up to homeomorphism. 
If $S$ is equipped with a holomorphic structure, then  $|\Lambda^\Tc|$ can be identified with the
circle bundle corresponding to the holomorphic line bundle
\[
T_S(\log M) \,=\, (\Omega^1_S(\log M))^*
\]
whose sections are holomorphic vector fields on $S$ vanishing on $M$. This fact can be obtained by
carefully analyzing the case when $S$ is a triangle (with a complex structure) and $M$ is the set of
its 3 vertices. In this case by \cite{drinfeld}, the interior of $|\Lambda^M|=|\Lambda^2|$ is the
space of cyclically monotone embeddings $M\to S^1$. The Riemann Mapping Theorem identifies this
space with the space of biholomorphisms $f$  from $S$ to the unit disk $D=\{|z|\leq 1\}$ (such $f$
is uniquely determined by the images of three points on the boundary). Another way of determining
$f$ is by prescribing an interior point $s\in S$ (sent by $f$ to $0$) and a tangent direction at $s$
(sent by $d_sf$ to the tangent direction of $\RR_+$ at $0$). This provides an identification of the
interior of $|\Lambda^M|$ with the tangent circle bundle of the interior of $S$. We omit further
details. 
\end{rem}

\subsubsection{From a ribbon graph to a cyclic membrane}  

We provide a dual description of the association $\Tc \mapsto \Lambda^{\Tc}$ in terms of ribbon
graphs which easily allows us to generalize it to more general polygonal subdivisions.

Let $\Gamma$ be a ribbon graph. For a vertex $v$ of $\Gamma$ let $B(v)$ be the set of oriented arcs
comprising the local boundary of the ribbon corolla of $v$.  See Figure \ref{fig:corolla} where this
set is denoted $\{b_0,\dots,b_n\}$. Note that $B(v)$ has a natural cyclic order inherited from that
on $\on{Ed}(v)$, the set of half-edges edges incident to $v$.
More precisely, $B(v)=\on{Ed}(v)^*$ is the set of interstices in $\on{Ed}(v)$. 
We denote by $\Lambda^{B(v)}$ the cyclic simplex corresponding to $B(v)$. 
  
For an edge $e$   of $\Gamma$,  let $B(e)$ be the $2$-element set of components of the ribbon strip
$e$. See  Figure \ref{fig:edge} where this set is denoted $\{a_0, a_1\}$. Invariantly,
$B(e)=\on{Vert}(e)^*$, where $\on{Vert}(e)$ is the 2-element set of endpoints of $e$ (considered
distinct even if $e$ is a loop).  
As any $2$-element set, $B(e)$ has a unique  total cyclic order.
We associate to $e$ the cyclic $1$-simplex $\Lambda^{B(e)}$. 

For a flag $(v,e)$ consisting of a vertex and an edge of $\Gamma$, we have an inclusion of cyclic
ordinals $u_{v,e}: B(e)\to B(v)$ and the corresponding embedding of the cyclic simplexes
\[
	(u_{v,e})_*: \Lambda^{B(e)}\to\Lambda^{B(v)}. 
\]
Let $\Gamma_{[0,1]}$be the incidence category of $\Gamma$ with the set of objects being
$\on{Vert}(\Gamma)\sqcup\on{Ed}(\Gamma)$ and morphisms being incidence inclusions. 
As before, we get a functor $U_\Gamma: \Gamma_{[0,1]}^\op\to\Set_\Lambda$ sending $e$ to
$\Lambda^{B(e)}$, $v$ to $\Lambda^{B(v)}$ and an incidence $v\subset e$ to $u_{v,e*}$. We define the
cyclic membrane corresponding to $\Gamma$ as
 \[
	\Lambda^\Gamma = \ind^{\Set_\Lambda} U_\Gamma.
\]

\begin{ex}
If $\Tc$ is a triangulation of $(S,M)$ and $\Gamma$ is its dual ribbon graph, then
$\on{Vert}(\Gamma)=\Tc_2$, $\on{Ed}(\Gamma)=\Tc_1$, the functor $U_\Gamma$ is identified with
$U_\Tc$, and $\Lambda^\Gamma$ with $\Lambda^\Tc$. 
\end{ex}
 
Let $p: \Gamma \to \Gamma'$ be a contraction of ribbon graphs. For any vertex $v'$ of $\Gamma$, we
have a canonical map
\[
\coprod\limits_{v \in p^{-1}(v')} \Lambda^{B(v)} \to \Lambda^{B(v')}
\]
of cyclic sets. These maps induce an inclusion of cyclic membranes $\Lambda^\Gamma \to
\Lambda^{\Gamma'}$. Since this association is functorial, we obtain the following result.

\begin{prop} \label{prop:cycmem} The cyclic membrane construction 
	\[
	\Rib \lra \Set_{\Lambda},\; \Gamma \mapsto \Lambda^\Gamma
	\]
	extends to a functor on the category of Ribbon graphs with contractions as morphisms.\qed
\end{prop}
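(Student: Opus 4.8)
The plan is to define the functor on morphisms and then check the two functoriality axioms. Since $\Lambda^\Gamma = \ind^{\Set_\Lambda} U_\Gamma$ is a colimit, to produce a morphism $\Lambda^p : \Lambda^\Gamma \to \Lambda^{\Gamma'}$ attached to a contraction $p : \Gamma \to \Gamma'$ it suffices, by the universal property, to exhibit a cocone under $U_\Gamma$ with apex $\Lambda^{\Gamma'}$, i.e. a family of maps $U_\Gamma(x) \to \Lambda^{\Gamma'}$ indexed by the objects $x$ (vertices and edges) of $\Gamma_{[0,1]}$, compatible with the incidence morphisms $u_{v,e*}$. The edges of $\Gamma$ split into those collapsed by $p$ (lying in the interior of some contracted subtree $p^{-1}(v')$) and those surviving as edges of $\Gamma'$; I treat the two families separately and route everything through the structure maps $\Lambda^{B(v')} \to \Lambda^{\Gamma'}$ and $\Lambda^{B(e')} \to \Lambda^{\Gamma'}$ of the colimit defining $\Lambda^{\Gamma'}$.

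The combinatorial heart is the construction, for each vertex $v$ of the subtree $T_{v'} = p^{-1}(v')$, of a canonical map of cyclic ordinals $B(v) \to B(v')$, and likewise $B(e) \to B(v')$ for each collapsed edge $e \subset T_{v'}$. Here I use the defining property of a contraction: the induced map $\partial(p^{-1}(v')) \to \on{Ed}(v')$ is a cyclic-order-preserving bijection. Since $B(v') = \on{Ed}(v')^*$ is the set of interstices of $\on{Ed}(v') \cong \partial(T_{v'})$, I argue as follows. Each interstice of $v$ is a boundary arc of the ribbon corolla of $v$ lying on the boundary circle of the thickened tree $T_{v'}$; the arcs of this circle cut out by consecutive external legs are precisely the interstices of $\partial(T_{v'})$, and I send an interstice of $v$ to the one containing it. Because every subtree hanging off an internal half-edge of $v$ possesses a leaf, distinct interstices of $v$ are separated by an external leg, so the assignment is injective and manifestly respects the cyclic order; it therefore defines a monomorphism $B(v) \hookrightarrow B(v')$ in $\mathbf{\Lambda}$, hence by Yoneda a map $\Lambda^{B(v)} \to \Lambda^{B(v')}$. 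Geometrically this is the statement that, in the polygonal subdivision of the big polygon $P_{v'}$ cut out by $T_{v'}$, every corner of the small polygon $P_v$ is a vertex of $P_{v'}$; the two-element set $B(e)$ of a collapsed edge maps the same way, as the pair of corners adjacent to the diagonal $e$.

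With these maps in hand I define the cocone: a vertex $v$ maps by $\Lambda^{B(v)} \to \Lambda^{B(v')} \to \Lambda^{\Gamma'}$; a collapsed edge $e$ by $\Lambda^{B(e)} \to \Lambda^{B(v')} \to \Lambda^{\Gamma'}$; and a surviving edge $e$ with image $e' = p(e)$ via the identification $B(e) = B(e')$ followed by $\Lambda^{B(e')} \to \Lambda^{\Gamma'}$. Compatibility with the incidence morphisms then has to be checked case by case: for a collapsed edge both endpoints $v,w$ lie in $T_{v'}$ and commutativity reduces to the equality of the ordinal maps $B(e) \to B(v) \to B(v')$ and $B(e) \to B(w) \to B(v')$, both equal to the interstice pair cut out by $e$; for a surviving edge one uses that the retained endpoint of $e$ in $T_{v'}$ and the corresponding vertex of $\Gamma'$ see $B(e)$ identically. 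This produces the map $\Lambda^p$, which is an inclusion since every $B(v) \to B(v')$ is injective.

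Finally I verify functoriality. For the identity contraction each subtree is a single vertex, the maps $B(v) \to B(v)$ are identities, and $\Lambda^{\id} = \id$. For composable contractions $p : \Gamma \to \Gamma'$ and $q : \Gamma' \to \Gamma''$, the subtree of $\Gamma$ contracting to $v''$ under $qp$ is the union of the $T_{v'}$ over $v' \in q^{-1}(v'')$, and $\Lambda^q \circ \Lambda^p = \Lambda^{qp}$ follows from transitivity of the interstice assignment, i.e. that $B(v) \to B(v') \to B(v'')$ equals the direct map $B(v) \to B(v'')$, a consequence of the nesting of the polygonal subdivisions. I expect the main obstacle to be the bookkeeping of the second and third steps: making the "arc containing it" prescription precise enough to see simultaneously that $B(v) \to B(v')$ preserves the cyclic order, that the family is a genuine cocone, and that it composes transitively. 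The geometry of sub-polygons makes all three evident, but a fully rigorous argument requires careful tracking of the ribbon and interstice data, particularly in the degenerate cases where vertices or edges of a triangle become identified on $S$ (Figure \ref{fig:vert-sigma}).
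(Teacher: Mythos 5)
Your proposal is correct and takes essentially the same approach as the paper, whose entire proof consists of asserting the canonical maps $\coprod_{v \in p^{-1}(v')} \Lambda^{B(v)} \to \Lambda^{B(v')}$ (your interstice-containment maps $B(v) \to B(v')$), claiming they induce a map of cyclic membranes, and invoking functoriality of the association. Your write-up simply supplies the details the paper leaves implicit: the extension of the cocone to collapsed and surviving edge objects, the compatibility checks with the incidence maps $u_{v,e*}$, and the identity and composition axioms.
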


\subsubsection{Mapping class group actions} 

Let $X \in \Cb_{\Lambda}$ be a cyclic object and $\Gamma$ a Ribbon graph. We define
\[
RX_{\Gamma}  :=  \llb \Lambda^\Gamma, X \rlb_R, 
\]
where $\Lambda^\Gamma$ denotes the cyclic membrane corresponding to $\Gamma$. 
%
%
Similarly, given a marked surface $(S,M)$ and a triangulation $\T$ of $S$ with set of vertices $M$,
we denote by $RX_\Tc = \llb \Lambda^\Gamma, X \rlb_R$ the corresponding derived membrane space. 

 
\begin{thm}\label{thm-2-Segal-TFT}
Let $X$ be a cyclic $2$-Segal object in $\Cb_{\Lambda}$. Then the functor
\[
RX:\; \Rib \lra \C,\; \Gamma \mapsto RX_\Gamma
\]
maps contractions of ribbon graphs to weak equivalences in $\Cb$.
\end{thm}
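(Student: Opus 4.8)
The plan is to reduce the statement to a single, local incarnation of the $2$-Segal condition. First I would replace $X$ by an injectively fibrant cyclic object $\widetilde X$, so that $RX_\Gamma=\llb \Lambda^\Gamma,X\rlb_R\simeq\llb \Lambda^\Gamma,\widetilde X\rlb$ is computed by a strict limit and the functor $\llb-,\widetilde X\rlb:\Set_\Lambda^{\op}\to\Cb$ becomes a homotopical functor carrying colimits of cyclic sets to limits in $\Cb$ and, thanks to injective fibrancy, homotopy pushouts (pushouts along monomorphisms) to homotopy pullbacks, exactly as in the simplicial membrane formalism of \cite{HSS1}. Next I would observe that every contraction $p:\Gamma\to\Gamma'$ factors as a finite composite of \emph{elementary} contractions, each collapsing a single internal edge of the contracted sub-forest; since weak equivalences are closed under composition, it suffices to treat an elementary contraction.

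So let $p$ collapse one internal edge $e_0$ with endpoints $v_1,v_2$ of valencies $d_1,d_2$, producing a vertex $v'$ of valency $d_1+d_2-2$. The contraction axiom (that $\partial(p^{-1}(v'))\to\on{Ed}(v')$ is a cyclic-order preserving bijection) shows that, locally, the incidence diagram $U_{\Gamma'}$ differs from $U_\Gamma$ only in that the two corollas $\Lambda^{B(v_1)},\Lambda^{B(v_2)}$ glued along $\Lambda^{B(e_0)}$ are replaced by the single corolla $\Lambda^{B(v')}$. This exhibits the inclusion $\Lambda^\Gamma\hookrightarrow\Lambda^{\Gamma'}$ as a pushout, along a monomorphism, of the local subdivision inclusion $A:=\Lambda^{B(v_1)}\cup_{\Lambda^{B(e_0)}}\Lambda^{B(v_2)}\hookrightarrow \Lambda^{B(v')}=:B$, so the resulting square of cyclic sets is a homotopy pushout. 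Applying $\llb-,\widetilde X\rlb$ I obtain a homotopy pullback square in $\Cb$
\[
\xymatrix{
RX_{\Gamma'}\ar[r]\ar[d] & \llb B,X\rlb_R\ar[d]\\
RX_{\Gamma}\ar[r] & \llb A,X\rlb_R
}
\]
in which the right-hand vertical arrow is the local comparison map, and the left-hand vertical arrow is the map induced by $\Lambda^\Gamma\hookrightarrow\Lambda^{\Gamma'}$.

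It remains to control the local map. Since $\Lambda^{B(v')}$ is the representable cyclic set on the cyclic ordinal $B(v')$, we have $\llb B,X\rlb_R\simeq X_{d_1+d_2-3}$, while the gluing presentation of $A$ gives $\llb A,X\rlb_R\simeq X_{d_1-1}\times^R_{X_1}X_{d_2-1}$, the homotopy fiber product over the shared edge. Under these identifications the local map is precisely the subdivision map cutting the $(d_1{+}d_2{-}2)$-gon into a $d_1$-gon and a $d_2$-gon along the single diagonal $B(e_0)$. Choosing a linear representative of $B(v')$ turns this into the simplicial single-chord subdivision $\Delta^{\Tc}\hookrightarrow\Delta^{d_1+d_2-3}$; that this induces a weak equivalence on derived membrane spaces follows from the $2$-Segal property of $\varsigma^*X$ by further triangulating both pieces and applying two-out-of-three, the full triangulation maps being weak equivalences by definition and the refinement map being a homotopy limit, over the two polygons, of their triangulation maps. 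Hence the right vertical arrow is a weak equivalence, and since the square is a homotopy pullback, so is $RX_{\Gamma'}\to RX_\Gamma$.

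I expect the main obstacle to be the formal heart of the argument: establishing rigorously that the derived cyclic membrane functor sends the gluing pushout $\Lambda^\Gamma\cup_{A}B\cong\Lambda^{\Gamma'}$ to a homotopy pullback. This is where injective fibrancy of $\widetilde X$ and the cofibrancy of $A\hookrightarrow B$ must be used with care, and where one must check that the chosen factorization of a general contraction into elementary ones is compatible with these local pushout presentations. The final reduction of the single-chord subdivision to the ``all triangulations'' form of the $2$-Segal condition in Definition \ref{def:segal} is routine but should be spelled out.
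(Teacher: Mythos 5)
Your proposal is correct and follows essentially the same route as the paper's proof: reduce to a single-edge contraction, replace $X$ by an injectively fibrant model, exhibit $\Lambda^\Gamma \to \Lambda^{\Gamma'}$ as a pushout of the local subdivision inclusion of the two glued corollas into the big corolla, and identify the resulting map of membrane spaces as a base change of the $2$-Segal map for the subdivision of an $(n+m+1)$-gon into an $(n+1)$-gon and an $(m+1)$-gon. The only differences are in packaging: the paper concludes by noting that fibrancy makes the $2$-Segal map a \emph{trivial fibration}, whose strict pullback is again a trivial fibration (avoiding any homotopy-pullback language), whereas you phrase this as a homotopy pullback square with one weakly invertible leg, and you additionally spell out two steps the paper leaves implicit, namely the factorization of a general contraction into single-edge contractions and the two-out-of-three reduction of the single-chord subdivision map to the triangulation form of Definition \ref{def:segal}.
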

\begin{proof} It suffices to show that a contraction $p: \Gamma \to \Gamma'$ of a single edge $e$ of
	$\Gamma$ to a vertex $v$ of $\Gamma'$ induces a weak equivalence. Without restriction we
	assume that $X$ is injectively fibrant. Assume that the edge $e$ is incident to vertices of
	valency $m+1$ and $n+1$, respectively. Then the map of cyclic membranes $\Lambda^\Gamma \to
	\Lambda^{\Gamma'}$ induced by $p$ is a pushout of the map
	\[
	\Lambda^{\{0,n,\dots,n+m\}} \coprod_{\Lambda^{\{0,n\}}} \Lambda^{\{0,1,\dots,n\}} \to
	\Lambda^{\{0,1,\dots,n+m\}}.
	\]
	Evaluating $X$ on this map, we obtain 
	\[
		X_{\{0,1,\dots,n+m\}} \to X_{\{0,n,\dots,n+m\}} \times_{X_{\{0,n\}}} X_{\{0,1,\dots,n\}}
	\]
	which is the $2$-Segal map corresponding to the subdivision of a $(n+m+1)$-gon into a
	$(n+1)$-gon and a $(m+1)$-gon along the edge $\{0,n\}$. Thus it is a weak equivalence, and
	hence, due to the fibrancy assumption, a trivial fibration in $\Cb$. Since the map
	\[
	RX_\Gamma \to RX_{\Gamma'}
	\]
	induced by $p$ is a pullback of the above $2$-Segal map it is a weak equivalence as
	well.
\end{proof}

Let $(S,M)$ be a marked surface and let $\Sigma(S,M)$ be the partially ordered set of isotopy
classes of spanning graphs for $(S,M)$ where, as in \S \ref{subsec:back-triang-surf}, the order is
given by degeneration. The geometric realization $|\Sigma(S,M)|$ of the poset $\Sigma(S,M)$ is
homeomorphic to the tessellation complex $K_{S,M}$ from \S \ref{subsub:rib}, and hence contractible
by Proposition \ref{prop:modular-contractible}. We obtain the following immediate consequences.

\begin{cor}\label{cor:indep-triang1}
Let $X$ be a cyclic $2$-Segal object in $\Cb$ and let $(S,M)$ be a stable marked surface. 
Then the object $RX_\Gamma$, $\Gamma \in \Sigma(S,M)$, is, up to weak equivalence, independent on
the choice of $\Gamma$ and defines therefore a unique isomorphism class of objects in $\on{Ho}(\Cb)$
depending only on $(S,M)$. 
\end{cor}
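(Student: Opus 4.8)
The plan is to assemble the objects $RX_\Gamma$, $\Gamma\in\Sigma(S,M)$, into a single functor indexed by the poset $\Sigma(S,M)$ and then use contractibility of the tessellation complex to collapse it to a point. First I would fix once and for all an injectively fibrant replacement $X\to\widetilde X$, so that $RX_\Gamma\simeq\llb\Lambda^\Gamma,\widetilde X\rlb$ with the right-hand side computed by the underived membrane space. By Proposition \ref{prop:cycmem} the assignment $\Gamma\mapsto\Lambda^\Gamma$ is functorial on ribbon graphs and contractions, and $\llb-,\widetilde X\rlb$ is functorial in its cyclic-set argument, so $\Gamma\mapsto\llb\Lambda^\Gamma,\widetilde X\rlb$ is an honest $\Cb$-valued functor on $\Rib$. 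Restricting along the inclusion of the poset $\Sigma(S,M)$ into $\Rib$ (each degeneration $\Gamma\le\Gamma'$ being a contraction in the sense of \S\ref{subsub:rib}) and composing with the localization $\Cb\to\Ho(\Cb)$, I would obtain a functor
\[
 F:\;\Sigma(S,M)\lra\Ho(\Cb),\qquad \Gamma\mapsto RX_\Gamma.
\]

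Next I would observe that $F$ inverts every morphism. Each generating degeneration collapses a single edge, and Theorem \ref{thm-2-Segal-TFT} guarantees that such a contraction is sent by $RX$ to a weak equivalence, i.e.\ to an isomorphism in $\Ho(\Cb)$; a general morphism of the poset is a composite of such, hence also goes to an isomorphism. Consequently $F$ factors through the localization of $\Sigma(S,M)$ at all of its morphisms. Here I would invoke the classical Gabriel--Zisman identification of this localization with the fundamental groupoid $\Pi_1(|\Sigma(S,M)|)$ of the classifying space of the poset, so that $F$ descends to a functor $\overline F:\Pi_1(|\Sigma(S,M)|)\to\Ho(\Cb)$.

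Finally, by the homeomorphism $|\Sigma(S,M)|\cong K_{S,M}$ together with Proposition \ref{prop:modular-contractible}, the space $|\Sigma(S,M)|$ is contractible, so its fundamental groupoid is the trivial groupoid. A functor out of the trivial groupoid selects a single object of $\Ho(\Cb)$ up to canonical isomorphism; pulling this back along $\Sigma(S,M)\to\Pi_1(|\Sigma(S,M)|)$ shows that all the $RX_\Gamma$ are canonically isomorphic in $\Ho(\Cb)$, which is exactly the assertion. For the bare statement of the corollary, note that only path-connectedness of $K_{S,M}$ is strictly needed: it already yields a zig-zag of weak equivalences between any two $RX_\Gamma$, hence a single isomorphism class. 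Simple-connectivity is what upgrades these to the \emph{coherent}, canonical isomorphisms needed later for the action of the mapping class group.

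The main obstacle I anticipate is not any single computation but the functoriality and variance bookkeeping in the first step: one must verify that the membrane construction descends to a well-defined strict functor on the poset $\Sigma(S,M)$ (so that the localization argument applies), that isotopic contractions induce the same map in $\Ho(\Cb)$, and that the poset order is compatible with the contraction direction used in Theorem \ref{thm-2-Segal-TFT}. Once this is settled, the conceptual heart — passing from ``inverts all morphisms'' to ``constant'' — is routine via the groupoid-completion fact and contractibility of $K_{S,M}$.
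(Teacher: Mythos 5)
Your proposal is correct, and its essential mechanism is exactly the paper's: Theorem \ref{thm-2-Segal-TFT} turns edge contractions into weak equivalences, and connectivity of $|\Sigma(S,M)|\cong K_{S,M}$ (via Proposition \ref{prop:modular-contractible}) chains any two spanning graphs together. The difference is one of emphasis and overhead. The paper's entire proof of this corollary is the observation you relegate to your closing remark: connectedness alone gives a zig-zag of weak equivalences between any two $RX_\Gamma$, hence a single isomorphism class in $\Ho(\Cb)$ --- no strict functoriality on the poset, no Gabriel--Zisman localization, and no contractibility are needed, since one may choose an arbitrary contraction representative for each covering relation $\Gamma\le\Gamma'$. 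Your main line of argument (functor on $\Sigma(S,M)$ inverting all morphisms, factoring through $\Pi_1(|\Sigma(S,M)|)$, then collapsing by contractibility) is heavier but buys more: it is essentially the paper's proof of Corollary \ref{cor:indep-triang2}, where simple-connectedness is invoked to make the isomorphisms $RX_\Gamma\cong RX_{(S,M)}$ canonical and to produce the mapping class group action. Correspondingly, the ``variance bookkeeping'' you flag as the main obstacle (well-definedness on isotopy classes, strictness of the functor) is genuinely an issue only for that coherent refinement, not for the corollary as stated; the paper itself defers and largely suppresses it at this level of the exposition.
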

\begin{proof} This follows from Theorem \ref{thm-2-Segal-TFT}, since $|\Sigma(S,M)|$ is connected.
\end{proof}

\begin{cor}\label{cor:indep-triang2}
Let $X$ be a cyclic $2$-Segal object in $\Cb$ and let $(S,M)$ be a stable marked surface. 
The diagram 
\[
	\Sigma(S,M) \to \on{Ho}(\Cb),\; \Gamma \mapsto RX_{\Gamma}
\]
admits a colimit, denoted by $RX_{(S,M)}$, which is, for every $\Gamma \in \Sigma(S,M)$, equipped with a canonical isomorphism $RX_{\Gamma} \cong
RX_{(S,M)}$ in $\Ho(\Cb)$. The mapping class group $\mathfrak{G}(S,M)$ acts on
$RX_{(S,M)}$ by automorphisms in $\on{Ho}(\Cb)$.
\end{cor}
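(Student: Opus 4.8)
The plan is to exploit the fact that, by Theorem \ref{thm-2-Segal-TFT}, the functor $RX\colon \Sigma(S,M)\to\on{Ho}(\Cb)$, $\Gamma\mapsto RX_\Gamma$, already inverts every morphism of its source. Indeed, each relation $\Gamma\leq\Gamma'$ in the degeneration order is realized by a contraction of ribbon graphs, which $RX$ sends to a weak equivalence in $\Cb$, hence to an isomorphism in $\on{Ho}(\Cb)$. Thus $RX$ is a diagram all of whose transition maps are isomorphisms, and the whole problem reduces to understanding colimits of such ``locally constant'' diagrams over the poset $\Sigma(S,M)$.

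First I would use the standard fact that a functor inverting all morphisms factors uniquely through the localization $\Sigma(S,M)[W^{-1}]$ at the class $W$ of all morphisms, and that this localization is canonically the fundamental groupoid $\Pi_1(|\Sigma(S,M)|)$ of the classifying space. Since $|\Sigma(S,M)|$ is homeomorphic to the tessellation complex $K_{S,M}$, which is contractible by Proposition \ref{prop:modular-contractible}, the groupoid $\Pi_1(|\Sigma(S,M)|)$ is equivalent to the trivial groupoid. Consequently $RX$ factors, up to canonical natural isomorphism, through the one-object category; its value there is an object $RX_{(S,M)}\in\on{Ho}(\Cb)$ equipped, for each $\Gamma$, with a comparison isomorphism $RX_\Gamma\cong RX_{(S,M)}$. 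The decisive point is that this isomorphism is \emph{canonical}: connectedness of $K_{S,M}$ alone only yields uniqueness of the isomorphism class, which is the content of Corollary \ref{cor:indep-triang1}, whereas simple connectedness guarantees that any two zig-zags of degenerations joining $\Gamma$ to a fixed basepoint induce the same isomorphism in $\on{Ho}(\Cb)$. With these canonical legs, $RX_{(S,M)}$ is the colimit of the diagram: for a diagram of isomorphisms over a connected and simply connected base, any object of the diagram together with the canonical comparison maps forms a colimiting (in fact also limiting) cocone, so that no completeness hypothesis on $\on{Ho}(\Cb)$ is needed.

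For the mapping class group action, I would observe that $\mathfrak{G}(S,M)$ acts on the poset $\Sigma(S,M)$ by $\Gamma\mapsto g\Gamma$, and that an orientation-preserving diffeomorphism $g$ carries the ribbon graph $\Gamma$ isomorphically onto $g\Gamma$, hence induces an isomorphism of cyclic membranes $\Lambda^\Gamma\cong\Lambda^{g\Gamma}$ and therefore an isomorphism $RX_\Gamma\cong RX_{g\Gamma}$ in $\on{Ho}(\Cb)$. These isomorphisms are compatible with degenerations, so they assemble into a natural isomorphism of functors $RX\cong RX\circ g$. Combining this with the reindexing isomorphism $\colim(RX\circ g)\cong\colim(RX)$, which holds because $g$ is an automorphism of the indexing poset, and invoking the universal property of the colimit established above, yields an automorphism of $RX_{(S,M)}$ for each $g$; checking compatibility with composition produces the desired action of $\mathfrak{G}(S,M)$ by automorphisms in $\on{Ho}(\Cb)$. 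The main obstacle I anticipate lies in the second paragraph, namely establishing the canonicity of the comparison isomorphisms $RX_\Gamma\cong RX_{(S,M)}$ independently of all choices of connecting zig-zag; this is precisely the step where simple connectedness of $K_{S,M}$, rather than the mere connectedness used in Corollary \ref{cor:indep-triang1}, is indispensable, and a secondary point requiring care is the naturality of the membrane isomorphisms $\Lambda^\Gamma\cong\Lambda^{g\Gamma}$ with respect to contractions, which is what makes the group action well defined at the level of diagrams before passage to the colimit.
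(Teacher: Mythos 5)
Your proposal is correct and takes essentially the same route as the paper: the paper's proof of this corollary consists precisely of citing Theorem \ref{thm-2-Segal-TFT} together with the connectedness and simple connectedness of $|\Sigma(S,M)|$ (via Proposition \ref{prop:modular-contractible}), which is exactly what you use. Your argument simply spells out the details the paper leaves implicit — the factorization of the morphism-inverting functor through the fundamental groupoid $\Pi_1(|\Sigma(S,M)|)$, the resulting canonical comparison isomorphisms, and the induced mapping class group action via its action on the poset $\Sigma(S,M)$.
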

\begin{proof}
	This follows from Theorem \ref{thm-2-Segal-TFT} and the fact that $|\Sigma(S,M)|$ is
	connected and simply connected.
\end{proof}

\begin{defi}\label{defi:dermem} We call the object $RX_{(S,M)}$ the {\em derived membrane space of the surface $(S,M)$ in
	$X$}.
\end{defi}

\begin{rem} In the above results, we have only used the $1$-connectedness of $|\Sigma(S,M)|$. The
	contractibility of $|\Sigma(S,M)|$ amounts to the statement that there exists a {\em
	coherent} action of the mapping class group. One way to make this precise for closed
	surfaces is to consider the full subcategory $\Rib_3$ of $\Rib$ spanned by
	stable, connected Ribbon graphs with each vertex of valence $\ge 3$. It is well-known (see, e.g.,
	\cite{igusa-reidemeister}), that we have a weak equivalence of topological spaces
	\[
	  |\N(\Rib_3)| \simeq \coprod_{(S,M)} B \mathfrak{G}(S,M)
	\]
	where $(S,M)$ ranges over stable closed marked surfaces.
	Theorem \ref{thm-2-Segal-TFT} implies that the functor
	\[
	  \Rib \lra \Cb, \Gamma \mapsto R X_{\Gamma}
	\]
	maps all morphisms in $\Rib$ to weak equivalences. Passing to nerves, we obtain a map
	\[
	  |\N(\Rib)| \lra |\N(W)|
	\]
	where $W$ denotes the subcategory of weak equivalences in $\Cb$. This map encodes, for each stable
	marked surface $(S,M)$, the choice of an object of $\Cb$ together with a coherent action of
	the mapping class group $\mathfrak{G}(S,M)$.
	A more refined analysis in the context of $\infty$-categories will be given in \cite{HSS2}.
\end{rem}

\section{Application: Fukaya categories}
  
\subsection{Topological Fukaya categories}

We apply the theory of cyclic membrane spaces to the cocyclic $2$-coSegal object $\E = \E^{\bullet}$ from
Theorem \ref{thm:E-cosegal}, considered as a cyclic $2$-Segal object in $(\dgcatt, \Mor)^{\op}$. We
use the upper index notation $L\E^{\Gamma}$ to denote $R(\E^{\op})_{\Gamma}$, and similarly for
other types of derived membrane spaces. In particular, we write $L\E^{(S,M)}$ for
$R(\E^{\op})_{(S,M)}$ from Definition \ref{defi:dermem}.

\begin{defi} Let $(S,M)$ be a stable marked oriented surface. We call the derived membrane object 
\[
	\F^{(S,M)} = L\E^{(S,M)} 
\]
the {\em topological coFukaya category of $(S,M)$}.
Given a $2$-periodic perfect dg-category $\Ac$, we call 
\[
	\IRHom(\F^{(S,M)}, \Ac)
\]
the {\em topological Fukaya category of $(S,M)$ with coefficients in $\Ac$}. We introduce special
notation for the Morita dual
\[
	\F_{(S,M)} = \IRHom(\F^{(S,M)}, \Perft_{\k})
\]
which is simply called the {\em topological Fukaya category of $(S,M)$}.
\end{defi}

As immediate consequences of the general theory of derived cyclic membranes, we obtain the following
main results.

\begin{thm}\label{thm:descent}
	Let $(S,M)$ be a stable marked oriented surface, and let $\Gamma$ be a spanning graph for $(S,M)$. 
	Then we have canonical isomorphisms in $\Hmot$
	\begin{align*}
		\F^{(S,M)} & \simeq L\E^{\Gamma} \simeq \hoind_{\{\Lambda^n \to \Lambda^{\Gamma}\}} \E^n\\
	        \F_{(S,M)} & \simeq R\E_{\Gamma} \simeq \hopro_{\{\Lambda^n \to
			\Lambda^{\Gamma}\}} \E_n,
	\end{align*}
	where the homotopy limits are taken in $(\dgcatt, \Mor)$.
\end{thm}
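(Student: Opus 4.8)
The plan is to assemble both displayed chains from the general theory of derived cyclic membrane spaces developed in \S\ref{subset:construction-TFT}, feeding in the two halves of Theorem~\ref{thm:E-cosegal} together with the compatibility of Morita and cyclic duality recorded in \eqref{eq:duale}. The inner equalities in each line are purely definitional, so the genuine content is twofold: the passage from a fixed spanning graph $\Gamma$ to the surface invariant, and the passage between the covariant and contravariant descriptions via the Morita dual.

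First I would treat the top line. By definition $\F^{(S,M)} = L\E^{(S,M)} = R(\E^{\op})_{(S,M)}$, where $\E^{\op}$ denotes $\E^\bullet$ regarded as a cyclic object of $(\dgcatt,\Mor)^{\op}$. By Theorem~\ref{thm:E-cosegal}(a) this cyclic object is $2$-Segal, so Corollary~\ref{cor:indep-triang2} (whose input is the contractibility of $K_{S,M}$ from Proposition~\ref{prop:modular-contractible}) provides, for every $\Gamma\in\Sigma(S,M)$, a canonical isomorphism $L\E^\Gamma = R(\E^{\op})_\Gamma \cong R(\E^{\op})_{(S,M)} = \F^{(S,M)}$ in $\Hmot$. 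The remaining equality merely unwinds the definition of the derived cyclic membrane space, using that a homotopy limit computed in $(\dgcatt)^{\op}$ is a homotopy colimit in $\dgcatt$:
\[
L\E^\Gamma = R(\E^{\op})_\Gamma = \hopro^{(\dgcatt)^{\op}}_{\{\Lambda^n\to\Lambda^\Gamma\}} \E^n = \hoind^{\dgcatt}_{\{\Lambda^n\to\Lambda^\Gamma\}} \E^n.
\]

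For the bottom line I would deduce everything from the top line by dualizing. Applying the Morita dual, $\F_{(S,M)} = \IRHom(\F^{(S,M)},\Perft_\k) \simeq \IRHom(L\E^\Gamma,\Perft_\k)$. Exactly as in the proof of Theorem~\ref{thm:E-cosegal}(b), the adjunction \eqref{eq:simplicial-adjunction} shows that $\IRHom(-,\Perft_\k)$ carries homotopy colimits in its first argument to homotopy limits; combined with the levelwise Morita duality $\E_n \simeq \IRHom(\E^n,\Perft_\k)$ of \eqref{eq:duale}, this yields
\[
\F_{(S,M)} \simeq \hopro^{\dgcatt}_{\{\Lambda^n\to\Lambda^\Gamma\}} \IRHom(\E^n,\Perft_\k) \simeq \hopro^{\dgcatt}_{\{\Lambda^n\to\Lambda^\Gamma\}} \E_n = R\E_\Gamma,
\]
the last equality again being the definition of the membrane space of the honest cyclic object $\E_\bullet$.

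I expect the main obstacle to be bookkeeping rather than a deep point: one must arrange that $\IRHom(-,\Perft_\k)$ is applied \emph{functorially over the entire indexing diagram} $\{\Lambda^n\to\Lambda^\Gamma\}$, so that the interchange with the homotopy colimit is legitimate and compatible with the levelwise equivalences \eqref{eq:duale}, i.e. so that the chain above assembles into a single equivalence of the two membrane spaces and not merely a termwise collection of them. A secondary subtlety is the invocation of Corollary~\ref{cor:indep-triang2} in $(\dgcatt,\Mor)^{\op}$, whose combinatoriality is not automatic; here I would either appeal to the same general machinery already used to \emph{define} $\F^{(S,M)}$, or, to remain inside the genuinely combinatorial category $(\dgcatt,\Mor)$, first establish the bottom line directly for the cyclic $2$-Segal object $\E_\bullet$ via Corollary~\ref{cor:indep-triang2} and then recover the top line by a second application of Morita duality (using that the membrane, being built as a finite homotopy colimit of the smooth and proper $\E^n$, is dualizable and hence reflexive by Proposition~\ref{prop:dg-dualizable}).
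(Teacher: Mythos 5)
Your primary argument is correct and is essentially the paper's own proof: the paper proves Theorem \ref{thm:descent} by simply citing Corollary \ref{cor:indep-triang2}, and what you have written out — Corollary \ref{cor:indep-triang2} applied to $\E^{\op}$ (cyclic $2$-Segal by Theorem \ref{thm:E-cosegal}(a)) for the top line, the definitional unwinding of the derived membrane space into a homotopy colimit, and the bottom line obtained by applying $\IRHom(-,\Perft_\k)$, using \eqref{eq:duale} levelwise together with the colimit-to-limit property from the proof of Theorem \ref{thm:E-cosegal}(b) — is exactly what that citation leaves implicit.

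One caution about your hedged alternative at the end: the claim that the membrane object is dualizable because it is ``built as a finite homotopy colimit of the smooth and proper $\E^n$'' is false. Smoothness and properness are not inherited by homotopy colimits in $(\dgcatt,\Mor)$, and the paper's own computation in \S \ref{subsection:fukayaexamples} gives a counterexample: for the disk with one interior and one boundary marked point, $\F^{(S,M)} \simeq \Perft(\AA^1_\k)$, which is smooth but not proper (e.g.\ $\End(\Oc)\cong\k[x]$ has infinite-dimensional cohomology), hence not dualizable by Proposition \ref{prop:dg-dualizable}. So the proposed route of first proving the bottom line for $\E_\bullet$ and then recovering the top line by double Morita duality breaks down in general; the direction of deduction must be the one in your primary argument, from the coFukaya side to the Fukaya side.
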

\begin{proof} Corollary \ref{cor:indep-triang2}.
\end{proof}

Therefore, while the definition of the topological (co)Fukaya category does not depend on any choice
of a triangulation of $(S,M)$, we may chose a triangulation to compute it via the descent
isomorphisms of Theorem \ref{thm:descent}.

\begin{thm}\label{thm:mapping}
	Let $(S,M)$ be a stable marked oriented surface. The topological (co)Fukaya categories
	$\F^{(S,M)}$ and $\F_{(S,M)}$ admit a canonical action of the mapping class group of
	$(S,M)$ via automorphisms in $\Hmot$.
\end{thm}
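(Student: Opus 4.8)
The plan is to obtain both actions as direct specializations of the general mapping class group action furnished by Corollary \ref{cor:indep-triang2}, applied to the two cyclic $2$-Segal objects that compute the (co)Fukaya categories. The genuine work has already been carried out in Theorem \ref{thm-2-Segal-TFT} and Corollary \ref{cor:indep-triang2}, which produce, for \emph{any} cyclic $2$-Segal object $X$ in a combinatorial model category $\Cb$, a canonical $\mathfrak{G}(S,M)$-action on the derived membrane space $RX_{(S,M)}$ by automorphisms in $\Ho(\Cb)$, using only the $1$-connectedness of the tessellation complex (Proposition \ref{prop:modular-contractible}). It remains to identify $\F^{(S,M)}$ and $\F_{(S,M)}$ as such membrane spaces and to locate the resulting homotopy categories.

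First I would treat $\F_{(S,M)}$, which is the more direct case. By Definition \ref{defi:dermem} one has $\F_{(S,M)} = R\E_{(S,M)}$, the derived membrane space of the cyclic object $\E_{\bullet}$ in the combinatorial model category $(\dgcatt, \Mor)$. By Theorem \ref{thm:E-cosegal}(b) this cyclic object is $2$-Segal, so Corollary \ref{cor:indep-triang2} applies verbatim with $X = \E_{\bullet}$ and $\Cb = (\dgcatt, \Mor)$. It yields a canonical action of $\mathfrak{G}(S,M)$ on $R\E_{(S,M)} = \F_{(S,M)}$ by automorphisms in $\Ho(\dgcatt, \Mor) = \Hmot$, which is exactly the assertion for the Fukaya category.

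For $\F^{(S,M)}$ I would run the same argument in the opposite model structure. By construction $\F^{(S,M)} = R(\E^{\op})_{(S,M)}$, where $\E^{\op}$ denotes the cocyclic object $\E^{\bullet}$ regarded as a cyclic object in $\Cb = (\dgcatt, \Mor)^{\op}$; by Theorem \ref{thm:E-cosegal}(a) it is $2$-Segal in this category. Applying Corollary \ref{cor:indep-triang2} with this $X$ and $\Cb$ produces a canonical action of $\mathfrak{G}(S,M)$ on $R(\E^{\op})_{(S,M)} = \F^{(S,M)}$ by automorphisms in $\Ho((\dgcatt, \Mor)^{\op}) = (\Hmot)^{\op}$. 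Since inversion identifies the opposite group $\Aut_{\Hmot}(\F^{(S,M)})^{\op}$ with $\Aut_{\Hmot}(\F^{(S,M)})$, a group homomorphism from $\mathfrak{G}(S,M)$ into the automorphism group computed in $(\Hmot)^{\op}$ is the same datum as one into the automorphism group computed in $\Hmot$; this converts the action into the required one by automorphisms in $\Hmot$.

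The one point that requires care — and the main obstacle I would flag — is the legitimacy of invoking the cyclic membrane machinery, and hence Corollary \ref{cor:indep-triang2}, in the opposite category $(\dgcatt, \Mor)^{\op}$: here $R(\E^{\op})_{(S,M)}$ is computed by homotopy colimits in $\dgcatt$ rather than homotopy limits, so one must confirm that the coSegal form of Theorem \ref{thm-2-Segal-TFT} together with the contractibility argument goes through in this dual setting. This is precisely the reading under which $\E^{\bullet}$ was declared a cyclic $2$-Segal object in $(\dgcatt, \Mor)^{\op}$. As a cross-check, one can alternatively deduce the action on $\F^{(S,M)}$ from that on $\F_{(S,M)}$ through the Morita duality $\F^{(S,M)} \simeq (\F_{(S,M)})^{\vee}$ and the contravariant functoriality of $(-)^{\vee}$ on $\Hmot$, which carries the $\mathfrak{G}(S,M)$-action to its dual; this route avoids the opposite model structure but costs a dualizability hypothesis on $\F^{(S,M)}$.
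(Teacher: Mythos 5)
Your proposal is correct and is essentially the paper's own proof: the paper establishes Theorem \ref{thm:mapping} by simply invoking Corollary \ref{cor:indep-triang2}, applied exactly as you do to $\E^{\bullet}$ regarded as a cyclic $2$-Segal object in $(\dgcatt,\Mor)^{\op}$ (Theorem \ref{thm:E-cosegal}(a)) and to the dual cyclic object $\E_{\bullet}$ in $(\dgcatt,\Mor)$ (Theorem \ref{thm:E-cosegal}(b)). The only slip is attributing the identification $\F_{(S,M)} \simeq R\E_{(S,M)}$ to Definition \ref{defi:dermem}: in the paper $\F_{(S,M)}$ is \emph{defined} as the Morita dual $\IRHom(\F^{(S,M)},\Perft_{\k})$, and its identification with the derived membrane space of $\E_{\bullet}$ is the content of Theorem \ref{thm:descent} via the duality compatibility \eqref{eq:duale}, but this does not affect the validity of your argument.
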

\begin{proof} Corollary \ref{cor:indep-triang2}.
\end{proof}

The name ``topological Fukaya category" for $\F_{(S,M)}$ is justified as follows.
First, if $(S,M) = (\sigma, \on{Vert}(\sigma))$
 is a triangle, then elements of the cyclic set $M^*$ are identified with
edges of the triangle $\sigma$. For two such edges $i,j\in M^*$ the indecomposable object
$E_{ij}\in \E_M = \E^{M^*}$ is then visualized by an oriented simple arc $\alpha$
beginning at an interior point of the edge $i$ and ending at an interior point of the edge $j$
(such arcs form one isotopy class). Let us denote this object $E_\alpha$. 
\[
\begin{tikzpicture}
\usetikzlibrary{calc}
\def\centerarc[#1](#2)(#3:#4:#5)  { \draw[#1] ($(#2)+({#5*cos(#3)},{#5*sin(#3)})$) arc (#3:#4:#5); }

    \node (left) at (-1.5,0) {};
 \fill (left) circle (0.05);
    \node (middle) at (0,2) {};
 \fill (middle) circle (0.05);
    \node (right) at (1.5,0) {};
 \fill (right) circle (0.05);

    \draw[postaction={decorate}] (left.center) -- (middle.center);
      \draw[postaction={decorate}] (middle.center) -- (right.center);
       \draw[postaction={decorate}] (left.center) -- (right.center);
       
       \node at (-1,1.2){$i$};
       \node at (1,1.2){$j$};

\draw[->>] (-0.75,1) .. controls (-0.5,0.9) and (0.5,0.9) .. (0.75,1);

\node at (0, 0.7){$\alpha$};

\end{tikzpicture}
\]
Next, let $\Tc$ be a triangulation of a stable marked surface $(S,M)$, and
$\Gamma$ be the corresponding spanning graph. The homotopy limit
defining $R\E_\Tc=R\E_\Gamma$  can be computed by reducing it
to a homotopy fiber product which is then computed by using 
the concept of the path object $P(\Ac) \to \Ac \times \Ac$ of a dg
category $\Ac$ from \cite{tabuada}, see the proof of Proposition 
\ref{prop:universal1}. Explicitly, this means that local arcs in individual
triangles as above can be combined, in the homotopy limit, yielding a large 
supply of aggregate objects: 
\begin{enumerate}
	\item {\em open Lagrangians:} isotopy classes of oriented immersed arcs $\beta$
	 which begin and end on $\partial S$ and avoid $M$. In the real blowup
	 picture they correspond to arcs beginning and ending on open ends of the 
	 blown up surface. 
	  
  	\item {\em closed Lagrangians:} isotopy classes of oriented closed immersed curves
  	$\beta$ avoiding $\partial S \cup M$ and equipped with a flat $\k^*$-principal bundle 
	(completely classified by its monodromy).
      	In the real blowup picture $\beta$ is a closed oriented curve inside the blown up surface. 
\end{enumerate}
More precisely, the object $E_\beta$ corresponding to an arc or curve $\beta$ as above
is obtained by gluing together the objects $E_\alpha\in \E_\sigma$ for $\sigma$
being a triangle of $\Tc$ and $\alpha$ being a component of $\beta\cap\sigma$. 
The monodromy for closed curves appears because of the $\k^*$-freedom in identifying
the images of $E_\alpha$ and $E_{\alpha'}$  in $\E_{\on{Vert}(b)}$ for adjacent parts 
$\alpha, \alpha'$ of $\beta$ cut by adjacent triangles with common edge $b$.

\subsection{Examples}

\label{subsection:fukayaexamples}

As an illustration, we show how our construction relates to some examples from Kontsevich's list
\cite[Pictures]{kontsevich-sym-homol}. 
In each case we first exhibit the surface Postnikov system generating the
  coFukaya category $\F^{(S,M)}$, then refine this to a homotopy colimit presentation
  via  Theorem  \ref{thm:descent}, and, finally,  explain how to
  identify $\Fc^{(S,M)}$ or  $\Fc_{(S,M)}$ with an algebro-geometric derived category. 
 We assume the ground field $\k$ to be algebraically closed.

\subsubsection{The affine line}

Let $(S,M)$ be a disk with one interior and one boundary marked point. 
The Ribbon graph $\Gamma$ displayed in Figure \ref{fig:annulus1} is a spanning graph in $(S,M)$ whose corresponding
real blowup is given by an annulus with an open end on one of its boundary components.

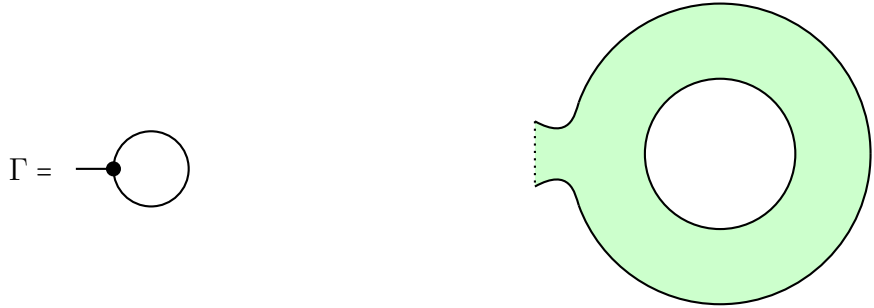
\begin{figure}[ht]
\begin{minipage}[b]{0.5\linewidth}
\centering
\begin{tikzpicture}[baseline=-10ex,>=latex,scale=0.5]
\begin{scope}

  	\draw (180:1.5cm) node[fill=white,left] {$\Gamma =\quad $};
	\draw[thick] (0:1cm) arc (0:360:1cm);
	\draw[thick] (180:1cm) -- (180:2cm);
	\fill (180:1cm) circle [radius=0.2cm];

\end{scope}
\end{tikzpicture}
\end{minipage}
\begin{minipage}[b]{0.5\linewidth}
\centering
\begin{tikzpicture}[>=latex,scale=1.0]
\begin{scope}[decoration={
    markings,
    mark=at position 0.5 with {\arrow{>}}}
    ] 

\fill[fill=green!20!white] (0:1cm) -- (0:2cm)
	arc (0:360:2cm) -- (0:1cm)
	arc (360:0:1cm) -- cycle;

\fill[fill=green!20!white] (160:2cm) .. controls (165:2cm) and (175:2cm) .. (170:2.5cm)
		-- (-170:2.5cm) .. controls (-175:2cm) and (-165:2cm) .. (-160:2cm)
		-- cycle;

	\draw[thick] (0:1cm) arc (0:360:1cm); 
	\draw[thick] (0:2cm) arc (0:160:2cm);
	\draw[thick] (0:2cm) arc (0:-160:2cm);
	\draw[thick, dotted] (170:2.5cm) -- (-170:2.5cm);
	\draw[thick] (160:2cm) .. controls (165:2cm) and (175:2cm) .. (170:2.5cm)
		(-170:2.5cm) .. controls (-175:2cm) and (-165:2cm) .. (-160:2cm);

\end{scope}
\end{tikzpicture}
\end{minipage}
\caption{Ribbon graph $\Gamma$ with corresponding real blowup}\label{fig:annulus1}
\end{figure}

\noindent {\bf  Surface Postnikov system:}
\be\label{eq:SPS-A1}
A\buildrel \alpha\over\lra A\lra C\lra \Sigma A
\ee
(One distinguished triangle with two terms being the same.) 

\vskip .2cm

\noindent {\bf Homotopy colimit presentation:}
\[
	\F^{(S,M)} \simeq \E^2 \coprod^{\h}_{\E^1 \coprod \E^1} \E^1 \simeq \A^2 \coprod^{\h}_{\A^1 \coprod \A^1} \A^1
 \simeq \A^2 \coprod_{\A^1 \coprod \A^1} \A^1  =:  \EuScript L	
	\]
 where we use the Morita equivalences $\A^n \to \E^n$ to simplify the homotopy pushout
 and then identify it with the usual pushout using the
  fact that 
   the map
$\A^1 \coprod \A^1 \to \A^2$ is a Morita cofibration.
The usual push out, denoted $\EuScript L$, is the 
 $\Zt$-graded $\k$-linear category freely generated by the quiver with one
vertex and one loop. It  corresponds to the part $\{A\buildrel\alpha\over\to A\}$ in 
\eqref{eq:SPS-A1}. 

\vskip .2cm

\noindent {\bf Algebro-geometric picture:} 
  Passing to the perfect envelope we obtain
\[
  \F^{(S,M)} \simeq \Perft_{\EuScript L} \simeq \Perft(\AA^1_\k) , 
\]
the $\Zt$-folded category of perfect complexes on the affine line.
Dually, we obtain the category
\[
	\F_{(S,M)} \simeq \IRHom(\Perft_{\EuScript L},\Perft_{\k}) \simeq \IRHom(\EuScript L, \Perft_{\k})
\]
which can be identified with the full subcategory of $\Perf^{(2)}(\AA^1_\k)$ consisting of those
complexes whose cohomology is compactly supported.

The indecomposable objects of $\F_{(S,M)}$ are given by shifts of skyscraper sheaves on $\AA^1$.
The corresponding Lagrangians can be explicitly visualized in the annulus. The skyscraper sheaf of
length $n$ with support at the origin in $\AA^1$ corresponds to the curve which starts at the open
end wraps $n$ times around the annulus and ends at the open end. A skyscraper sheaf of length $n$
with support at a point $\lambda \in \AA^1$ with $\lambda \ne 0$, corresponds to a closed curve
which wraps around the annulus $n$-times and is equipped with the flat $\k^*$-principal bundle
with monodromy $\lambda$. Shifting an indecomposable object amounts to changing the orientation
of the corresponding object.

Dually, we can visualize the generator of the category $\F^{(S,M)}$ corresponding to the
vertex of the quiver $\EuScript L$. It corresponds to an arc which connects both boundary components
of the annulus. 

\subsubsection{The projective line}
\label{subsub:projline}

Let $(S,M)$ be an annulus with one marked point on each boundary component.
Figure \ref{fig:annulus2} depicts two spanning Ribbon graphs and the real blow up.
\begin{figure}[ht]
\begin{minipage}[b]{0.5\linewidth}
\centering
\begin{tikzpicture}[baseline=-10ex,>=latex,scale=0.5]
\begin{scope}
	\draw[thick] (0:1cm) arc (0:360:1cm);
	\draw[thick] (180:1cm) -- (180:2cm);
	\draw[thick] (180:1cm) -- (180:0cm);
	\fill (180:1cm) circle [radius=0.2cm];
\end{scope}
\begin{scope}[shift={(6,0)}]

  	\draw (180:3cm) node[fill=white,left] {$\simeq$};
	\draw[thick] (-150:1.5cm) arc (-150:150:1.5cm);
	\draw[thick] (-150:1.5cm) -- (150:1.5cm);
	\draw[thick] (-150:1.5cm) -- (-100:0.5cm);
	\draw[thick] (150:1.5cm) -- (155:2.7cm);
	\fill (-150:1.5cm) circle [radius=0.2cm];
	\fill (150:1.5cm) circle [radius=0.2cm];

\end{scope}
\end{tikzpicture}
\end{minipage}
\begin{minipage}[b]{0.5\linewidth}
\centering
\begin{tikzpicture}[>=latex,scale=1.0]
\begin{scope}[decoration={
    markings,
    mark=at position 0.5 with {\arrow{>}}}
    ] 

\fill[fill=green!20!white] (0:1cm) -- (0:2cm)
	arc (0:360:2cm) -- (0:1cm)
	arc (360:0:1cm) -- cycle;

\fill[fill=green!20!white] (160:2cm) .. controls (165:2cm) and (175:2cm) .. (170:2.5cm)
		-- (-170:2.5cm) .. controls (-175:2cm) and (-165:2cm) .. (-160:2cm)
		-- cycle;

\fill[fill=green!20!white] (150:1cm) .. controls (160:1cm) and (160:0.8cm) .. (130:0.6cm)
		     -- (-130:0.6cm) .. controls (-160:0.8cm) and (-160:1cm) .. (-150:1cm) -- (180:2cm)
		     -- cycle;

	\draw[thick] (0:1cm) arc (0:150:1cm);
	\draw[thick] (0:1cm) arc (0:-150:1cm);
	\draw[thick] (0:2cm) arc (0:160:2cm);
	\draw[thick] (0:2cm) arc (0:-160:2cm);
	\draw[thick, dotted] (170:2.5cm) -- (-170:2.5cm);
  	\draw (180:2.4cm) node[left] {$P$};
	\draw[thick, dotted] (130:0.6cm) -- (-130:0.6cm);
  	\draw (180:0.45cm) node[right] {$Q$};
	\draw[thick] (160:2cm) .. controls (165:2cm) and (175:2cm) .. (170:2.5cm)
		(-170:2.5cm) .. controls (-175:2cm) and (-165:2cm) .. (-160:2cm);
	\draw[thick] (150:1cm) .. controls (160:1cm) and (160:0.8cm) .. (130:0.6cm)
		     (-130:0.6cm) .. controls (-160:0.8cm) and (-160:1cm) .. (-150:1cm);

\end{scope}
\end{tikzpicture}
\end{minipage}
\caption{Spanning Ribbon graphs and real blowup}\label{fig:annulus2}
\end{figure}
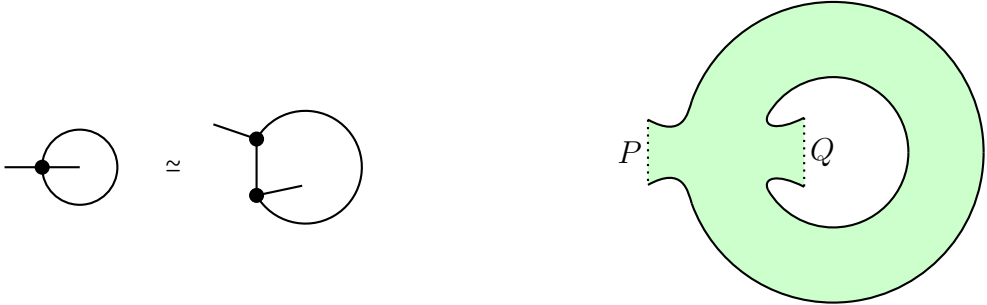

\noindent {\bf Surface Postnikov system:} 
\be\label{eq:SPS-P1}
\xymatrix{
&A\ar@<.5ex>[dd]^{\alpha'}\ar@<-.5ex>[dd]_{\alpha}& \\
C\ar[ur]^{+1}&&C', \ar[ul]_{+1} \\
&B\ar[ul]  \ar[ur]&
}
\ee
(Two distinguished triangles with two vertices in common.)

\vskip .2cm

\noindent {\bf Homotopy colimit presentation:} 
 \[
	\F^{(S,M)} \simeq (\E^2 \coprod^{\h}_{\E^1} \E^2) \coprod^{\h}_{\E^1 \coprod \E^1} \E^1 
              \simeq (\A^2 \coprod_{\A^1} \A^2) \coprod_{\A^1 \coprod \A^1} \A^1 =:\K.
\]
where we use a similar argument to simplify the homotopy colimit. The
   $\Zt$-graded $\k$-linear category $\K$ is  freely generated by the Kronecker
quiver with two vertices and two parallel arrows. It corresponds to the part
$\bigl\{\xymatrix{
A\ar@<.5ex>[r]^\alpha \ar@<-.5ex>[r]_{\alpha'}&B
}\bigr\}$ in \eqref{eq:SPS-P1}. 

\vskip .2cm

\noindent {\bf Algebro-geometric picture:} Using the classical equivalence $b$  of Beilinson \cite{beilinson},
we identify $\Fc^{(S,M)}$
 \be\label{eq:beilinson}
 \begin{gathered}
	\F^{(S,M)} \simeq \Perft_{\K} =   D^{(2)}(\K) \buildrel b\over\lra   \Perft(\PP_\k^1)=  D^{(2)} (\PP^1_\k),\\
\xymatrix{
\{V^\bullet_A \ar@<.5ex>[r]^{\alpha} \ar@<-.5ex>[r]_{\alpha'} &V^\bullet_B\}
}
\,\, \buildrel b\over\longmapsto \,\,\, \P^\bullet := \on{Cone}\bigl\{ V^\bullet_A\otimes \Oc_{\PP^1}(-1) 
\buildrel \alpha t_0 + \alpha' t_1 \over \lra 
V^\bullet_B\otimes \Oc_{\PP^1}\bigr\} 
 \end{gathered}
\ee
with the $\Zt$-folding of the dg-category of perfect complexes on the projective line $\PP^1_\k= \on{Proj} \k[t_0, t_1]$. Since
this category is smooth and proper, and hence dualizable with respect to Morita duality, we obtain that
\[
	\F_{(S,M)} \simeq \IRHom(\Perft_{\K},\Perft_{\k}) \simeq \Perft_{\K^{\op}} \simeq \Perft_{\K} \simeq \F^{(S,M)}. 
\]
Again, we exhibit all indecomposable objects of $\Perft(\PP_\k^1)$ explicitly as objects
of $\F_{(S,M)}$, visualized as immersed Lagrangians in the annulus. The line bundle $\O(n)$, $n \in
\ZZ$, corresponds to an oriented arc starting at the open end $P$, wrapping $n$ times
counterclockwise around the annulus and ending at the open end $Q$. The skyscraper sheaf of length
$n$ supported at a finite nonzero point $\lambda$ in $\PP^1_k$ corresponds to a closed oriented
curve wrapping around the annulus $n$ times, equipped with the flat $\k^*$-principal bundle
corresponding to $\lambda$. Skyscraper sheaves of length $n$ supported at $0$ respectively $\infty$ correspond to oriented
curves beginning and ending at $P$ respectively $Q$ wrapping around the annulus $n$ times.

Dually, we can visualize the generators of $\F^{(S,M)}$ corresponding to the vertices of the
Kronecker quiver. 

\subsubsection{The nodal cubic curve}

Let $(S,M)$ be the  torus with one marked point. We use 
the spanning Ribbon graph $\Gamma$ displayed in Figure \ref{fig:torus}. 
 
\vskip .2cm
 
\noindent {\bf Surface Postnikov system:}
 \begin{equation}
   \xymatrix{
     A\ar@<.5ex>[r]^{\alpha} \ar@<-.5ex>[r]_{\alpha'} & B
     \ar@<.5ex>[r]^{\beta} \ar@<-.5ex>[r]_{\beta'} &C 
     \ar@<.5ex>[r]^{\gamma} \ar@<-.5ex>[r]_{\gamma'}&\Sigma A, 
   }
\end{equation}
(Two distinguished triangles on the same 3 objects, having  arrows in the same direction.)
It differs from \eqref{eq:SPS-P1} in that $C\simeq \on{Cone}(\alpha)$ is identified with $C'\simeq \on{Cone}(\alpha')$. 
 
\vskip .2cm
 
\noindent {\bf Homotopy colimit presentation:} 
\begin{equation}
    \F_{(S,M)} \simeq \Perft(\PP_\k^1) \times^{\h}_{\Perft_{\k} \times \Perft_{\k}}
    \Perft_{\k}.
\end{equation}

\noindent {\bf Algebro-geometric picture:} In the Beilinson equivalence \eqref{eq:beilinson},
the dg-vector spaces gives by the 
cones of $\alpha$ and $\alpha'$ are  the fibers of the perfect complex $\P^\bullet$ at $0$ and $\infty$.
So an identification of these  cones is a datum of descent to the nodal cubic curve $C= \PP^1_\k/(0\sim\infty)$. 
This can be extended to a Morita equivalence
\[
	\F_{(S,M)} \simeq \Perft(C),
\] 
see \cite{sibilla}. Theorem \ref{thm:mapping} implies the existence of an action of $SL(2,\ZZ)$ on $\Perft(C)$.
\begin{figure}[ht]
\begin{minipage}[b]{0.5\linewidth}
\centering
\begin{tikzpicture}[baseline=-10ex,>=latex,scale=0.5]
\begin{scope}

  	\draw (180:1.5cm) node[fill=white,left] {$\Gamma =\quad $};
	\draw[thick] (0:1cm) arc (0:360:1cm);
	\fill (180:1cm) circle [radius=0.2cm];
	\draw[thick] (180:1cm) arc (-90:-10:1cm);
	\draw[thick] (180:1cm) arc (-90:-350:1cm);

\end{scope}
\end{tikzpicture}
\end{minipage}
\begin{minipage}[b]{0.5\linewidth}
\centering
\begin{tikzpicture}[>=latex,scale=1.0]
\begin{scope}[shift={(-1.1,1.2)},decoration={
    markings,
    mark=at position 0.5 with {\arrow{>}}}
    ] 
\fill[fill=green!20!white] (0:1cm) -- (0:1.5cm)
	arc (0:360:1.5cm) -- (0:1cm)
	arc (360:0:1cm) -- cycle;

	\draw[thick] (-70:1cm) arc (-70:230:1cm);
	\draw[thick] (-80:1.5cm) arc (-80:240:1.5cm);

\end{scope}
\begin{scope}[decoration={
    markings,
    mark=at position 0.5 with {\arrow{>}}}
    ] 

\fill[fill=green!20!white] (0:1cm) -- (0:1.5cm)
	arc (0:360:1.5cm) -- (0:1cm)
	arc (360:0:1cm) -- cycle;

\fill[fill=green!20!white] (150:1cm) .. controls (160:1cm) and (170:1cm) .. ([shift={(-1.1,1.2)}] -70:1cm)
	-- ([shift={(-1.1,1.2)}] -80:1.5cm) .. controls (200:1cm) and (200:1cm) .. (-150:1cm)
	-- (-160:1.5cm) .. controls (190:1.5cm) and (190:1.5cm) .. ([shift={(-1.1,1.2)}] 240:1.5cm)
	-- ([shift={(-1.1,1.2)}] 230:1cm) .. controls (170:1.5cm) and (170:1.5cm) .. (160:1.5cm) --
	cycle;

	\draw[thick] (0:1cm) arc (0:150:1cm);
	\draw[thick] (0:1cm) arc (0:-150:1cm);
	\draw[thick] (0:1.5cm) arc (0:160:1.5cm);
	\draw[thick] (0:1.5cm) arc (0:-160:1.5cm);

	\draw[thick] (150:1cm) .. controls (160:1cm) and (170:1cm) .. ([shift={(-1.1,1.2)}] -70:1cm);
	\draw[thick] ([shift={(-1.1,1.2)}] -80:1.5cm) .. controls (200:1cm) and (200:1cm) .. (-150:1cm);
	\draw[thick] (-160:1.5cm) .. controls (190:1.5cm) and (190:1.5cm) .. ([shift={(-1.1,1.2)}]
	240:1.5cm);
	\draw[thick] ([shift={(-1.1,1.2)}] 230:1cm) .. controls (170:1.5cm) and (170:1.5cm) ..
	(160:1.5cm);

\end{scope}
\end{tikzpicture}
\end{minipage}
\caption{Ribbon graph $\Gamma$ and real blowup}\label{fig:torus}
\end{figure}
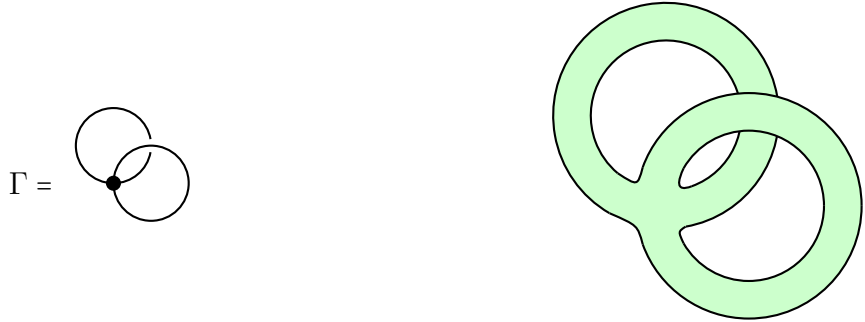
Using the methods of \cite{sibilla}, the topological Fukaya categories associated to the Ribbon
graph
\[
\begin{tikzpicture}[>=latex,scale=0.5]
\begin{scope}
	\draw[thick] (0,0) circle (1);
	\fill (-1,0) circle (0.2);
	\draw[thick] (3,0) circle (1);
	\fill (2,0) circle (0.2);
	\draw (6,0) node {$\dots$};
	\draw[thick] (9,0) circle (1);
	\fill (8,0) circle (0.2);
	\draw[thick] (-2,0) -- (-1,0);
	\draw[thick] (-1,0) -- (0.8,0);
	\draw[thick] (1.2,0) -- (2,0);
	\draw[thick] (2.0,0) -- (3.8,0);
	\draw[thick] (4.2,0) -- (5,0);
	\draw[thick] (7,0) -- (8,0);
	\draw[thick] (8,0) -- (9.8,0);
	\draw[thick] (10.2,0) -- (11,0);
	\draw[thick] (11,0) arc (90:-90:0.7);
	\draw[thick] (11,-1.4) -- (-2,-1.4);
	\draw[thick] (-2,-1.4) arc (270:90:0.7);

\end{scope}
\end{tikzpicture}
\]
with $n$ vertices can be identified with the $\Zt$-folding of the category of perfect complexes on a cycle
of $n$ projective lines which, by Theorem \ref{thm:mapping}, comes equipped with an action of the
mapping class group of the $n$-punctured torus. Note that, in \cite{burban-kreussler,sibilla},
actions of central extensions of these mapping class groups have been constructed on the
$\ZZ$-graded variants of the above categories. When passing to $2$-periodizations of the
respective categories, these actions factor through the mapping class group actions which we
construct.

\subsubsection{The union of two lines}

Let $(S,M)$ be a   sphere with 3 marked points. 
Two (equivalent) spanning Ribbon graphs $\Gamma, \Gamma'$ of $(S,M)$ 
are displayed in Figure \ref{fig:sphere3} with the
corresponding real blowup given by a disk with two open interior disks removed.

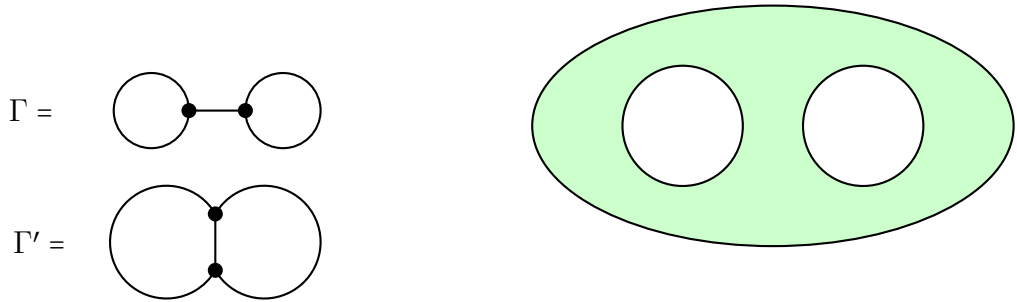
\begin{figure}[ht]
\begin{minipage}[b]{0.5\linewidth}
\centering
\begin{tikzpicture}[baseline=-10ex,>=latex,scale=0.5]
\begin{scope}

  	\draw (180:1.5cm) node[fill=white,left] {$\Gamma =\quad $};
	\draw[thick] (180:0cm) circle [radius=1cm];
	\fill (0:1cm) circle [radius=0.2cm];
	\draw[thick] (0:1cm) -- (0:2.5cm);
	\fill (0:2.5cm) circle [radius=0.2cm];
	\draw[thick] (0:3.5cm) circle [radius=1cm];

\end{scope}

\begin{scope}[shift={(3, -3.5)}]

  	\draw (180:5cm) node[fill=white,left] {$\Gamma' = $};
	\draw[thick] (-150:1.5cm) arc (-150:150:1.5cm);
	\draw[thick] (150:1.5cm) arc (30:330:1.5cm);
	\draw[thick] (-150:1.5cm) -- (150:1.5cm);
	\fill (-150:1.5cm) circle [radius=0.2cm];
	\fill (150:1.5cm) circle [radius=0.2cm];

\end{scope}

\end{tikzpicture}
\end{minipage}
\begin{minipage}[b]{0.5\linewidth}
\centering
\begin{tikzpicture}[>=latex,scale=0.8]
\begin{scope}
\fill[fill=green!20!white] (0:0) ellipse (4 and 2);
\fill[fill=white] (180:1.5) circle (1);
\fill[fill=white] (0:1.5) circle (1);

\draw[thick] (0:0) ellipse (4 and 2);
\draw[thick] (180:1.5) circle (1);
\draw[thick] (0:1.5) circle (1);

\end{scope}
\end{tikzpicture}
\end{minipage}
\caption{Ribbon graphs $\Gamma, \Gamma'$ with corresponding real blowup}\label{fig:sphere3}
\end{figure}

Using the   Ribbon graph $\Gamma'$ we obtain the following. 

\vskip .2cm
 
\noindent {\bf Surface Postnikov system:}
\be\label{eq:SPS-node}
\xymatrix{ A \ar@<.5ex>[r]^{\alpha}&B \ar@<.5ex>[l]^{\alpha'}
\ar@<.5ex>[r]^{\beta}&C \ar@<.5ex> ^{\beta'}[l]
\ar@<.5ex>[r]^{\gamma}&\Sigma A 
\ar@<.5ex>[l]^{\gamma'},
 }
\ee
(Two distinguished triangles on the same 3 objects, with arrows going in the opposite directions.)

\vskip .2cm

\noindent {\bf Homotopy colimit presentation:}
 \begin{equation}\label{eq:cofukpushout}
	\F^{(S,M)} \simeq \E^2 \coprod^{\h}_{\E^1 \coprod \E^1 \coprod \E^1} \E^2.
\end{equation}
\noindent {\bf Algebro-geometric picture:} In \eqref{eq:SPS-node}, the endomorphisms
\[
y_B = \alpha \alpha', \,\, x_B=\beta'\beta \,\,\in\,\, \End(B)
\]
satisfy $x_By_B=y_Bx_B=0$ and so make $B$ into a module (in $\Fc^{(S,M)}$) over the algebra
$R = \k[x,y]/(xy)$. Further, defining
\[
y_A = \alpha'\alpha, x_A = (\Sigma\gamma)(\Sigma\gamma') \in\End(A), \quad
y_C=\gamma'\gamma,  x_C= \beta\beta' \in\End(C),
\]
we make $B$ and $C$ into $R$-modules as well, so that the arrows in \eqref{eq:SPS-node}
commute with the $R$-action. Using  \eqref{eq:SPS-node} as a tilting object
and analyzing more carefully the morphisms of complexes, 
we construct a Morita equivalence 
$\Fc^{(S,M)} \simeq D^{(2)}_{\text{fg}}(R)$ with the 2-periodification of the bounded derived category of
finitely generated $R$-modules. 
On the level of objects, the equivalence takes $A \mapsto R/(x)$, $B \mapsto R$, and $C \mapsto R/(y)$, and the
surface Postnikov system \eqref{eq:SPS-node} in $\F^{(S,M)}$ corresponds to the system  in $D(R)$ given by
\begin{equation}
	  \label{eq:SPS-xy}
	  \xymatrix{ R/(x) \ar@<.5ex>[r]^{y}&R \ar@<.5ex>[l]^{\on{pr}}
	  \ar@<.5ex>[r]^{\on{pr}}&R/(y) \ar@<.5ex> ^{x}[l]
	  \ar@<.5ex>[r]^{}&\Sigma R/(x).
	  \ar@<.5ex>[l]^{}
}
\end{equation}
The mapping class group of a sphere with 3 marked points is the symmetric group
$S_3$ (cf. \cite{farb-margalit}). Therefore, $S_3$   acts on  $D^{(2)}_{\text{fg}}(R)$
by equivalences of triangulated categories. 
The action on objects is given by
\begin{align*}
	(12):& \quad (R/(x), R, R/(y)) \mapsto (\Sigma R/(x), R/(y), R)\\
	(23):& \quad (R/(x), R, R/(y)) \mapsto (R, R/(x), \Sigma R/(y)).
\end{align*}
For example, the cycle $(123)$ induces a two-step rotation of the above distinguished triangles.
Note that all three terms of the surface Postnikov system \eqref{eq:SPS-xy} have the same endomorphism
ring 
\[
	\Hom_R(R,R) \cong \Ext^\bullet_R(R/(x), R/(x)) \cong \Ext^\bullet_R (R/(y), R/(y)) \cong R,
\]
in $D^{(2)}(R)$ due to the $2$-periodic folding.

\section{Application: Waldhausen S-construction}\label{sec:sconstruction}

In \cite{HSS1}, we showed that the Waldhausen S-construction of a stable $\infty$-category is a
$2$-Segal space. Generalizing results of \cite{lurie.algebra}, it is shown in \cite{faonte} that the
differential graded nerve $\Ndg(\A)$ of a perfect dg category $\A$ is a stable $\infty$-category.

In this section, we show that, given a $2$-periodic perfect dg category $\A$, the Waldhausen
S-construction of the stable $\infty$-category $\Ndg(\A)$ is weakly equivalent to the simplicial
space $\Map(\E^{\bullet},\A)$.
An immediate consequence is the following result, predicted on a heuristic basis in \cite{HSS1}.

\begin{thm} The Waldhausen S-construction of a $2$-periodic perfect dg category admits a canonical
	cyclic structure.
\end{thm}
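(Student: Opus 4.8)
The plan is to obtain the cyclic structure by transport of structure along the comparison equivalence established in this section: I would exhibit the Waldhausen S-construction, up to weak equivalence, as the underlying simplicial space of an explicit cyclic space, whose cyclic symmetry is inherited from the cocyclic structure of $\E^\bullet$. Concretely, the only extra input beyond the comparison $S_\bullet(\Ndg(\A)) \simeq \Map(\E^\bullet,\A)$ is Proposition \ref{prop:E-n-cocyclic}, which furnishes $\E^\bullet$ not merely as a weak cocyclic diagram but as a genuine functor $\Lambda \to \dgcatt$.

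First I would make the target cyclic space precise. Rather than appealing to the Dwyer--Kan mapping space only up to coherent homotopy, I would use To\"en's functorial model $\Map(\A,\B)\simeq N(\Mod^{\on{rqr}}_{\A^{\op}\otimes\B},\mathsf{W})$ recalled in \S\ref{subsec:morita}, which is strictly contravariantly functorial in the first argument. Composing the strict cocyclic object $\E^\bullet:\Lambda\to\dgcatt$ with $\Map(-,\A)$ then yields an honest functor $\Lambda^{\op}\to\{\text{spaces}\}$, i.e.\ a cyclic space that I denote $\Map(\E^\bullet,\A)$. Restricting along the cyclic-closure embedding $\varsigma:\Delta\to\Lambda$ recovers its underlying simplicial space $\varsigma^*\Map(\E^\bullet,\A)=\Map(\E^{\varsigma(\bullet)},\A)$, and the cyclic rotations $t_n$ act on the $n$-th level through the automorphism of $\E^n$ induced by the generator of $\ZZ/(n+1)$ (the Coxeter functor, cf.\ Theorem \ref{thm:A-n-factorizations}(b)).

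Next I would invoke the comparison result of this section: there is a weak equivalence of simplicial spaces $S_\bullet(\Ndg(\A))\simeq\varsigma^*\Map(\E^\bullet,\A)$ compatible with the full simplicial structure. This exhibits $S_\bullet(\Ndg(\A))$ as equivalent, in the $\infty$-category of simplicial spaces, to the underlying simplicial space of the cyclic space $\Map(\E^\bullet,\A)$. Since the forgetful functor $\Fun(\Lambda^{\op},\{\text{spaces}\})\to\Fun(\Delta^{\op},\{\text{spaces}\})$ is $\varsigma^*$, the object $S_\bullet(\Ndg(\A))$ thereby lies in its essential image, and $\Map(\E^\bullet,\A)$ provides the desired lift; this is precisely the asserted cyclic structure. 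Canonicity follows because $\E^\bullet$ is itself a canonical, indeed strict, cocyclic object, so no choices enter the construction.

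The genuine difficulty is not the transport argument but the comparison statement it relies on, which I would treat as the substance of the section. Were I to prove that from scratch, the main obstacle would be identifying, compatibly with all faces and degeneracies, the $n$-th level $S_n(\Ndg(\A))$---parametrizing Postnikov-type flags of exact triangles---with the mapping space $\Map(\E^n,\A)$ for $\E^n=\LF(\Q^n)$. This rests on the level-by-level universal property of $\E^n$ (established for $n=2$ in Propositions \ref{prop:universal1} and \ref{prop:universal2}, and propagated via the $2$-coSegal property of Theorem \ref{thm:E-cosegal}) together with Faonte's theorem that $\Ndg(\A)$ is a stable $\infty$-category. The delicate point is to upgrade a levelwise identification to one respecting the entire simplicial structure, so that the resulting equivalence genuinely lies under a cyclic lift rather than merely matching individual levels.
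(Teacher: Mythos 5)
Your proposal follows essentially the same route as the paper: the theorem is obtained by transporting the cyclic structure of $\Map^{(2)}(\E^\bullet,\A)$ --- cyclic because $\E^\bullet$ is a genuine cocyclic object in $\dgcatt$ --- along the weak equivalence of simplicial spaces $\Map^{(2)}(\E^\bullet,\A)\simeq \SW_\bullet \Ndg(\A)$, which is exactly the content of the section's main proposition on the universal Waldhausen $n$-simplex $U^n:\dg(\J^n)\to\E^n$. Your added precision that To\"en's model $N(\Mod^{\on{rqr}}_{\A^{\op}\otimes\B},\mathsf{W})$ makes $\Map(\E^\bullet,\A)$ a strict cyclic space is a point the paper leaves implicit, but it does not alter the argument.
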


We recall the variant of the Waldhausen S-construction given in \cite{HSS1}. For $n \ge 0$, let
$\J^n$ be the nerve of the category $\Fun([1], [n])$ corresponding to the poset formed by ordered
pairs $(0\leq i\leq j\leq n)$, with $(i,j)\leq (k,l)$ iff $i\leq k$ and $j\leq l$. 

\begin{defi} \label{defi:exactinftywald} Let $\C$ be a stable $\infty$-category.
	We define 
	\[
	\SW_n \C \subset \Fun(\J^n, \C)_{\Kan}
	\]
 	to be the simplicial subset given by those simplices whose vertices are $\J^n$-diagrams $F$ satisfying the following conditions:
	\begin{enumerate}
	\item For all $0 \le i \le n$, the object $F(i,i)$ is a zero object in $\C$.
	\item For any $0 \le j \le k \le n$, the square 
	 \[
	 \xymatrix{
	 F(0,j) \ar[r] \ar[d] & F(0,k) \ar[d]\\
	 F(j,j) \ar[r] & F(j,k)
	 }
	 \] 
	in $\C$ is coCartesian.
\end{enumerate}
By construction, $\SW_n \C$ is functorial in $[n]$ and defines a simplicial
space $\SW \C$, which we call the {\em Waldhausen S-construction} or {\em Waldhausen space} of $\C$.
\end{defi}

To bridge between dg categories and $\infty$-categories, we use the Quillen adjunction 
\begin{equation}\label{eq:dg-infty}
	\dg: (\sSet,\on{Joyal}) \longleftrightarrow (\dgcat,\Qeq) : \Ndg
\end{equation}
as introduced in \cite{lurie.algebra}. Here, $\dgcat$ is equipped with the quasi-equivalence model
structure of Tabuada and $\sSet$ is equipped with the quasi-category model structure of Joyal.
Further, we have the Quillen adjunction
\begin{equation}\label{eq:dg-2periodic}
	P: (\dgcat, \Qeq) \longleftrightarrow (\dgcatt,\Qeq): F
\end{equation}
from \eqref{eq:adjunction:periodic}, where $P$ is given by folding the mapping complexes $2$-periodically and $F$ is the 
functor which forgets $2$-periodicity. In what follows, we leave the application of $F$ implicit. 

There is a canonical dg functor $U^n:\dg(\J^n) \to \E^n$ extending the assignment
\[
	(i,j) \mapsto E_{ij}.
\]
which, under \eqref{eq:dg-infty}, is adjoint to the functor $\J^n \to \Ndg(\E^n)$ which is nontrivial
only on the $1$-skeleton of $\J^n$ and sends the edge $(i,j) \le (k,l)$ of $\J^n$ to the edge of
$\Ndg(\E^n)$ given by the closed morphism of degree $0$
\[
	\begin{pmatrix}z^{k-i} & 0\\ 0 & z^{l-j} \end{pmatrix} : E_{ij} {\lra} E_{kl}.
\]
By \cite[17.4.15]{hirschhorn}, the above Quillen adjunctions induce weak equivalences of mapping spaces
\[
	\Map_{(\dgcatt, \Qeq)}(P(\dg(\J^n)), \A) \simeq \Map_{(\dgcat, \Qeq)}(\dg(\J^n), \A) \simeq
	\Map_{(\sSet,\on{Joyal})}(\J^n, \Ndg(\A)).
\]
Further, we have the formula
\[
	\Map_{(\sSet,\on{Joyal})}(\J^n, \Ndg(\A)) \simeq \Fun(\J^n, \Ndg(\A))_{\Kan}
\]
for the mapping spaces with respect to the Joyal model structure on $\sSet$.
Hence, pullback along the functor $U^n$ gives a natural map of simplicial sets
\[
	(U^n)^*: \Map^{(2)}(\E^n, \A) \lra \Fun(\J^n, \Ndg(\A))_{\Kan}.
\]
The following proposition shows that the functor $U^n$ is the {\em universal Waldhausen
$n$-simplex}.

\begin{prop} For every $n\ge 0$, the pullback map $(U^n)^*$ factors into
	\[
		\xymatrix{
			\Map^{(2)}(\E^n, \A) \ar[dr]_{f_n} \ar[rr]^{(U^n)^*} & & \Fun(\J^n, \Ndg(\A))_{\Kan}\\
			& \SW_n \Ndg(\A) \ar@{^{(}->}[ur] & 
		}
	\]
	where the map $f_n$ is a weak equivalence. In other words, the functor $(U^n)^*$ is
	a weak equivalence onto the union of those connected components $\Fun(\J^n,
	\Ndg(\A))_{\Kan}$ which satisfy conditions (1) and (2) of Definition
	\ref{defi:exactinftywald}. Further, the resulting weak equivalences $\{f_n\}$ assemble to provide
	a weak equivalence of simplicial spaces
	\[
		\Map^{(2)}(\E^{\bullet}, \A) \overset{\simeq}{\lra} \SW_{\bullet} \Ndg(\A).
	\]
\end{prop}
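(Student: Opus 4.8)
The plan is to establish the statement in three stages: first that $(U^n)^*$ factors through the subspace $\SW_n\Ndg(\A)$, so that $f_n$ is defined as its corestriction; then that this $f_n$ is a weak equivalence; and finally that the $f_n$ are natural in $[n]$. Throughout I regard a vertex of $\Map^{(2)}(\E^n,\A)$ as a dg-functor $\phi$, so that the associated $\J^n$-diagram is $F=\phi\circ U^n$ with $F(i,j)=\phi(E_{ij})$. For condition (1) of Definition \ref{defi:exactinftywald} I would use that $\epsilon_n\colon\E^n\to\T^n$ is fully faithful (Proposition \ref{prop:E-T-fully}) and that $[i,i]\simeq 0$ in $H^0\T^n$: this forces $\End^\bullet_{\E^n}(E_{ii})$ to be acyclic, so $E_{ii}$ is a zero object of $H^0(\E^n)$, whence $\id_{\phi(E_{ii})}=\phi(\id_{E_{ii}})$ is null-homotopic and $F(i,i)$ is a zero object of $\Ndg(\A)$.

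For condition (2), I would reduce the coCartesianness of the square attached to a triple $0<j<k$ to the case $n=2$. The monotone map $\theta\colon[2]\to[n]$, $(0,1,2)\mapsto(0,j,k)$, induces through the cosimplicial structure of $\E^\bullet$ a dg-functor $\delta\colon\E^2\to\E^n$ fixing the basepoint $0$ and sending $E_{01}\mapsto E_{0j}$, $E_{02}\mapsto E_{0k}$, $E_{12}\mapsto E_{jk}$ (no Coxeter twist enters since $\theta(0)=0$). By naturality of $U^\bullet$ the restriction of $F$ to the subdiagram on $\{0,j,k\}$ coincides with the diagram determined by $\phi\circ\delta$, and Proposition \ref{prop:universal2} identifies the latter with a distinguished triangle $\phi(E_{0j})\to\phi(E_{0k})\to\phi(E_{jk})$ in $\T$. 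Since $F(j,j)\simeq 0$, this is precisely the assertion that the square is coCartesian.

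For the weak equivalence I would exploit the commutative square
\[
\xymatrix{
\Map^{(2)}(\E^n,\A) \ar[r]^-{f_n} \ar[d]_-{(r_n^\triangleleft)^*} & \SW_n\Ndg(\A) \ar[d] \\
\Map^{(2)}(\A^n,\A) \ar[r]^-{\simeq} & \Fun(\Delta^{n-1},\Ndg(\A))_{\Kan}
}
\]
where the bottom map is the identification of a functor on the free category $\A^n$ with a chain $W_1\to\cdots\to W_n$ (via the same adjunctions used for $\J^n$), and the right vertical map sends $F$ to its top row $F(0,1)\to\cdots\to F(0,n)$. The left vertical is a weak equivalence because $r_n^\triangleleft$ is a Morita equivalence (Proposition \ref{prop:r-n-morita}) and $\A$, being perfect, is Morita fibrant, so the $\Qeq$- and $\Mor$-mapping spaces into $\A$ agree. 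The right vertical is a weak equivalence by the standard fact that an exact $\J^n$-diagram is the homotopy left Kan extension of its top row relative to the diagonal zero objects, so restriction to the top row (followed by forgetting the contractible datum $0\simeq F(0,0)\to F(0,1)$) is a trivial fibration. The square commutes on the nose, as both composites send $\phi$ to the chain $\phi(E_{01})\to\cdots\to\phi(E_{0n})$ with connecting maps $\phi(\alpha_{0,m,m+1})$; two-out-of-three then gives that $f_n$ is a weak equivalence.

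Finally, the $U^n$ assemble into a map $U^\bullet\colon\dg(\J^\bullet)\to\E^\bullet$ of cosimplicial dg-categories, so the $f_n$ are natural in $[n]$ and define a map of simplicial spaces $\Map^{(2)}(\E^\bullet,\A)\to\SW_\bullet\Ndg(\A)$; being a levelwise weak equivalence it is a weak equivalence of simplicial spaces. I expect the main obstacle to be condition (2): one must check that the purely $2$-periodic homotopy data encoded in $\E^2$ (the null-homotopies witnessing the cone relations, as in Proposition \ref{prop:universal2}) is transported correctly by the coface functor $\delta$, so that an arbitrary square truly reduces to the universal distinguished triangle of $\E^2$. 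The delicate point here is the degree bookkeeping relating the degree-$0$ structure maps $\alpha,\beta$ of the top row to the symmetric (degree-$1$) triangle appearing in Proposition \ref{prop:universal2}.
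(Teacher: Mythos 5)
Your stage-two argument (the commutative square relating $\Map^{(2)}(\E^n,\A)$ to $\SW_n\Ndg(\A)$ via restriction to the top row and pullback along $r_n^\triangleleft$, followed by two-out-of-three) is exactly the paper's argument, and your treatment of condition (1) via acyclicity of $\End_{\E^n}(E_{ii})$ is fine. The genuine gap is in condition (2), and it is not the ``degree bookkeeping'' you flag at the end. The inference ``Proposition \ref{prop:universal2} identifies the restricted diagram with a distinguished triangle in $\T$; since $F(j,j)\simeq 0$, this is precisely the assertion that the square is coCartesian'' is invalid. CoCartesianness is a property of the homotopy-coherent square in the $\infty$-category $\Ndg(\A)$ --- in particular of the specific null-homotopy of the composite $F(0,j)\to F(0,k)\to F(j,k)$ that the diagram carries --- whereas Proposition \ref{prop:universal2} is a statement about the image of the diagram in the triangulated homotopy category $\T$, which forgets exactly that datum. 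These are not equivalent. For instance, in $\Perft_\k$ take the square with corners $\k$, $0$, $0$, $\Sigma\k$ and all edges zero: its coherence datum is a closed degree $-1$ element $H$ of $\Hom^\bullet(\k,\Sigma\k)\cong\Hom^\bullet(\k,\k)[1]$, the induced map $\cone(\k\to 0)=\Sigma\k\to\Sigma\k$ is $H$ itself, so the square is coCartesian iff $H$ is invertible; yet the underlying triangle $\k\to 0\to\Sigma\k$ in the homotopy category can be completed to a distinguished one regardless. So distinguishedness in $\T$ cannot, by itself, certify the $\infty$-categorical pushout condition.

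What actually makes the statement true is that a map out of $\E^2$ carries more than its homotopy-category shadow: the null-homotopies $h_{21},h_{32},h_{13}$ and the unit relations \eqref{eq:rel1}, \eqref{eq:rel2} are precisely the coherence data exhibiting each vertex as the cone on the opposite edge, and the argument must be run at that level. The paper does this by a universal-example argument: using the methods of \cite[1.3.2]{lurie.algebra} it shows that the squares \eqref{eq:pushout} are already homotopy pushouts in the model category $(\E^n)^{\op}\mod$, converts these into $\infty$-categorical pushouts in $\Ndg(\Perf(\E^n))$ via the comparison theorem \cite[4.2.4.1]{lurie.htt}, and then pushes the conclusion into $\A$ using that dg functors between perfect dg categories preserve finite colimits. (The paper also handles a point you elide: since $\E^n$ is not cofibrant, vertices of $\Map^{(2)}(\E^n,\A)$ are not dg functors out of $\E^n$; because $\SW_n\Ndg(\A)$ is a union of connected components one may reduce to $\pi_0$ and represent classes by dg functors out of a cofibrant replacement $\widetilde{\E^n}$.) Your reduction to $n=2$ along the coface functor $\delta$ is salvageable, but the repair must be coherent: either run the paper's pushout argument for $\E^2$, or use \eqref{eq:rel1} and \eqref{eq:rel2} directly to construct a homotopy equivalence from $\cone(\phi(f_1))$ to the third vertex compatible with the structure maps; quoting the homotopy-category conclusion of Proposition \ref{prop:universal2} is not enough.
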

\begin{proof}
We utilize the $\infty$-categorical theory of Kan extensions as developed in \cite{lurie.htt}.
Consider the functor $i: \Delta^{n-1} \to \J^n$ given by the apparent extension of the association
\[
	k \mapsto [0,k]
\]
on objects. 
Consider the commutative diagram of simplicial sets
\begin{equation}\label{eq:bigdiag}
	\xymatrix{
		&&&\\
		\SW_n \Ndg(\A) \ar@{^{(}->}[r] \ar[dr]_{q} & 
		\Fun(\J^n, \Ndg(\A))_{\Kan} \ar[d]_{i^*} & \ar[l]_-{\simeq}\Map(\dg(\J^n), \A)\ar[d] &
		\ar[l]\ar[ld]^{p} \Map^{(2)}(\E^n, \A) \ar@{-->}@/_10ex/[lll] \ar@/_4ex/[ll]_-{(U^n)^*}\\
		& \Fun(\Delta^{n-1}, \Ndg(\A))_{\Kan} &\ar[l]_-{\simeq} \Map(\dg(\Delta^{n-1}), \A) &.
	}
\end{equation}
The map $q$ is a weak equivalence by Proposition 7.3.6(1) in \cite{HSS1}. The map $p$ is a
weak equivalence, since it is given by pulling back along the composite
\[
	\dg(\Delta^{n-1}) \overset{p_1}{\lra} \A^{n} \overset{r_{n}^\triangleleft}{\lra} \E^n
\]
where $p_1$ is a quasi-equivalence and $r_{n}^\triangleleft$ is the Morita equivalence from Proposition
\ref{prop:r-n-morita}. We claim that the map $(U^n)^*$ factors as indicated by the dashed arrow in
\eqref{eq:bigdiag}. Assuming the existence of this factorization, we obtain a commutative diagram
\begin{equation}
	\xymatrix{
		\SW_n \Ndg(\A)\ar[d]_{\simeq} & \ar[l]_{f_n} \ar[dl]^{\simeq} \Map^{(2)}(\E^n, \A)\\
		 \Fun(\Delta^{n-1}, \Ndg(\A))_{\Kan}&.
	}
\end{equation}
which shows that $f_n$ is a weak equivalence.

To show the claimed factorization, note that, since $\SW_n \Ndg(\A)$ is a union of
connected components of $\Fun(\J^n, \Ndg(\A))_{\Kan}$, it suffices to show that the factorization
exists after passing to $\pi_0$, i.e., morphism sets in the respective homotopy categories. Choosing
a cofibrant replacement $\widetilde{\E^n} \to \E^n$, we express pullback by $U^n$ as the composite
\begin{equation}\label{eq:composite}
	[\E^n, \A] \to [\widetilde{\E^n}, \A] \to [\dg(\J^n), \A] \cong [\J^n, \Ndg(\A)] 
\end{equation}
which allows us to represent every element of $[\widetilde{\E^n}, \A]$ by a dg functor
$\widetilde{\E^n} \to \A$. Using the method of \cite[1.3.2]{lurie.algebra} one verifies that the
squares 
\begin{equation}\label{eq:pushout}
	\xymatrix{
		E_{0j} \ar[r]\ar[d] & E_{0k} \ar[d]\\
		E_{jj} \ar[r] & E_{jk}
	}
\end{equation}
in $\E^n$, become homotopy pushout squares in the model category $(\E^n)^{\op}\mod$. The claim now follows from
the general comparison result \cite[4.2.4.1]{lurie.htt} between homotopy limits in simplicial categories and
$\infty$-categorical limits, again using the methods of \cite[1.3.2]{lurie.algebra}: the homotopy
pushout square \eqref{eq:pushout} becomes an $\infty$-categorical pushout square of
$\Ndg( \Perf(\E^n))$. This implies that its lift in $\Ndg( \Perf(\widetilde{\E^n}))$ is a pushout square
which maps to a pushout square in $\A$ (since dg functors between perfect dg categories
preserve finite homotopy colimits). This implies that the image of the composite
\eqref{eq:composite} satisfies condition (2) of Definition \ref{defi:exactinftywald}. It also
satisfies condition (1) by applying the same argument to the (homotopy) zero objects $E_{ii}$ of
$(\E^n)^{\op}\mod$.
\end{proof}

\end{document}